\newcommand{\bendL}[1]{\arrow[l,"{#1}" above,shift right, end
anchor=north east,bend right, start anchor=north west]}
\newcommand{\bendR}[1]{\arrow[l,"{#1}" below,shift left,end
anchor=south east,bend left, start anchor=south west]}
\newenvironment{diagram*}{\begin{equation*}\begin{tikzcd}}{\end{tikzcd}\end{equation*}\break}
\renewcommand{\subsection}{\@startsection{subsection}{1}{0pt}{-3.25ex plus -1ex minus-.2ex}{1.5ex plus.2ex}{\normalfont\it}}
\renewcommand{\section}{\@startsection{section}{1}{\parindent}{3.5ex plus 1ex minus .2ex}{2.3ex plus.2ex}{\sc}}
\renewcommand{\phi}{\varphi}
\renewcommand{\geq}{\geqslant}
\renewcommand{\epsilon}{\varepsilon}
\renewcommand{\kappa}{\varkappa}
\DeclareMathOperator{\Sp}{\sf Sp} 
\DeclareMathOperator{\coact}{coact} \DeclareMathOperator{\act}{act}
\DeclareMathOperator{\fp}{fp} \DeclareMathOperator{\fg}{fg}
\DeclareMathOperator{\Ext}{Ext} \DeclareMathOperator{\Ch}{Ch}
\DeclareMathOperator{\spec}{Spec}
 \DeclareMathOperator{\Qcoh}{Qcoh}
\DeclareMathOperator{\Hom}{Hom} 
 \DeclareMathOperator{\id}{id}
 \DeclareMathOperator{\Mor}{Mor}
 \DeclareMathOperator{\Ab}{Ab}
\DeclareMathOperator{\coh}{coh} \DeclareMathOperator{\kr}{Ker}
 \DeclareMathOperator{\im}{Im}
\DeclareMathOperator{\coker}{Coker} \DeclareMathOperator{\Zg}{\mathsf Zg}
\DeclareMathOperator{\lm}{lim} 
 \DeclareMathOperator{\Mod}{Mod}
\DeclareMathOperator{\modd}{mod} \DeclareMathOperator{\Ob}{Ob}
\DeclareMathOperator{\supp}{Supp}
\newcommand{\lp}{\varinjlim}
\newcommand{\lo}{\lm}
\newcommand{\lra}[1]{\bl{#1}\longrightarrow\relax}
\newcommand{\bl}[1]{\buildrel #1\over}
\newcommand{\cc}{\mathcal}
\newcommand{\bb}{\mathbb}
\newcommand{\op}{{\textrm{\rm op}}}
\newcommand{\wt}{\widetilde}
\newcommand{\ac}{{}_{\cc A}\cc C}
\newcommand{\ifff}{if and only if }
\theoremstyle{plain}
\newtheorem{lem}{Lemma}[section]
\newtheorem{cor}[lem]{Corollary}
\newtheorem{prop}[lem]{Proposition}
\newtheorem{thm}[lem]{Theorem}
\theoremstyle{definition}
\newtheorem{ex}[lem]{Example}
\newtheorem{exs}[lem]{Examples}
\newtheorem{rem}[lem]{Remark}
\newtheorem{defs}[lem]{Definition}
\begin{document}

\footskip30pt


\title{The Ziegler spectrum for enriched ringoids and schemes}

\author{Grigory Garkusha}
\address{Department of Mathematics, Swansea University, Fabian Way, Swansea SA1 8EN, UK}
\email{g.garkusha@swansea.ac.uk}


\keywords{Ziegler spectrum, Grothendieck categories of enriched functors, quasi-coherent sheaves}

\subjclass{16D90, 18D20, 18E45, 18F20}

\begin{abstract}
The Ziegler spectrum for categories enriched in closed symmetric monoidal Gro\-then\-dieck categories
is defined and studied in this paper. It recovers the classical Ziegler spectrum of a ring. As an application, the Ziegler spectrum
as well as the category of generalised quasi-coherent sheaves 
of a reasonable scheme is introduced and studied. It is shown that there is a closed embedding of 
the injective spectrum of a coherent scheme endowed with the tensor fl-topology 
(respectively of a noetherian scheme endowed with the dual Zariski topology)
into its Ziegler spectrum. It is also shown that quasi-coherent sheaves and generalised quasi-coherent 
sheaves are related to each other by a recollement.
\end{abstract}
\maketitle

\thispagestyle{empty} \pagestyle{plain}

\newdir{ >}{{}*!/-6pt/@{>}} 


\section{Introduction}

 The Ziegler spectrum of a ring $R$, defined by Ziegler in~\cite{Ziegler},
 associates a topological space ${}_R\Zg$ to $R$ whose points are the isomorphism classes 
 of indecomposable pure-injective (left) $R$-modules. A basis of quasi-compact open subsets for the Ziegler 
 topology is given by sets
    $$(\phi/\psi)=\{Q\in{}_R\Zg\mid \phi(Q) > \psi(Q)\}$$
 as $\phi/\psi$ ranges over pp-pairs (these are pairs of pp-formulas in the same free variables such that $\psi\to\phi$).
Ziegler proved in~\cite{Ziegler} that the closed sets correspond to definable subcategories of $R$-modules
(a definable subcategory $\cc D$ is sent to the closed set $\cc D\cap{}_R\Zg$).

Herzog~\cite{Herzog} and Krause~\cite{Krause} defined the Ziegler spectrum for locally coherent Grothendieck
categories in the 90s. In this language, ${}_R\Zg$ is recovered from the Ziegler spectrum of the category of generalised
$R$-modules ${}_R\cc C=(\modd R,\Ab)$ (it is a locally coherent Grothendieck category and consists of additive functors
from finitely presented right $R$-modules $\modd R$ to Abelian groups Ab). We also refer the reader to books by 
Prest~\cite{Prest,Prest2}.

Grothendieck categories of enriched functors were introduced in~\cite{AG}. Homological algebra associated to these
categories was developed in~\cite{GJ1,GJ2}. Applications of~\cite{AG,GJ1,GJ2} have recently been given
by Bonart~\cite{Bon1,Bon2} in motivic homotopy theory for reconstructing triangulated categories of Voevodsky's big
motives as well as motivic connected/very effective spectra with rational coefficients out of associated 
Grothendieck categories of enriched functors. 

Let $\cc V$ be a closed symmetric monoidal Grothendieck category.
A Grothendieck category of enriched functors is the category $[\cc A,\cc V]$ of enriched functors from
a (skeletally) small $\cc V$-category $\cc A$ to $\cc V$. In this paper we refer to $\cc A$ as an enriched ringoid similarly to 
the terminology for ringoids, which are also called pre-additive categories in the literature (i.e. categories enriched
over Abelian groups Ab). A typical example of an enriched ringoid is a DG-category, i.e. a category
enriched over the Grothendieck category
of chain complexes $\cc V=\Ch(\Ab)$. In this paper we also assume $\cc V$ to be locally finitely presented having a family
of dualizable generators $\cc G=\{g_i\}_{i\in I}$. In this case we refer to $[\cc A,\cc V]$ as the category of left $\cc A$-modules
and denote it by $\cc A\Mod$. In Section~\ref{moduli} we study basic properties of $\cc A$-modules and, more importantly, enriched
version for Serre's localization theory.

We introduce and study in Section~\ref{genmod} the enriched counterpart for the category of generalised modules.
It is the $\cc V$-category $\ac:=[\modd\cc A,\cc V]$ of enriched functors from the $\cc V$-category of finitely presented
right $\cc A$-modules $\modd\cc A$ to $\cc V$. The $\cc V$-categories $\ac$ and $\cc A\Mod$ are related to each other 
by a recollement (see Theorem~\ref{recoll} for details). Also, the $\cc V$-fully faithful functor $M\in\cc A\Mod\to-\otimes_{\cc A}M\in\ac$
identifies $\cc A$-modules with right exact functors (see Theorem~\ref{cohinjobj}). As a consequence, one shows in Section~\ref{sectionpureinj}
that this functor identifies pure-injective $\cc A$-modules introduced in Section~\ref{sectionpureinj} with injective objects of $\ac$.

Although $\ac$ is locally coherent by Theorem~\ref{vazhno}, and therefore the Ziegler spectrum 
$\Zg\ac$ in the sense of~\cite{Herzog, Krause} 
applies to $\ac$, this is actually not what we are going to investigate in this paper 
as $\Zg\ac$ does not capture the enriched
category information of $\ac$. In Section~\ref{sectionzg} we define the Ziegler spectrum ${}_{\cc A}\Zg$ of $\cc A$
that captures both the enriched category information of $\ac$ and the machinery of~\cite{Herzog, Krause}.
The points of $\Zg\ac$ and ${}_{\cc A}\Zg$ are the same but the topology on ${}_{\cc A}\Zg$ is coarser than the usual 
topology on $\Zg\ac$. 
It is worth mentioning that 
similar topologies on injective spectra of quasi-coherent sheaves which are
defined in terms of enriched functors occur in~\cite{GG3,GaPr2}. 
They play a key role for theorems reconstructing
schemes out of $\Qcoh(X)$ (see~\cite{GG3} for details).
In the case when $\cc V=\Ab$ and $\cc A=R$ is a ring,
${}_{\cc A}\Zg$ coincides with the classical Ziegler spectrum ${}_R\Zg$ of $R$.

The Ziegler topology for locally coherent Grothendieck categories defined in~\cite{Herzog,Krause} was extended
to locally finitely presented Grothendieck categories in~\cite[Theorem~11]{GG3}. In Section~\ref{sectioninj}
we define the enriched version of this topology on the injective spectrum $\Sp\cc A$ of the enriched 
ringoid $\cc A$. If $\cc V=\Qcoh(X)$ and $\cc A=\{\cc O_X\}$,
where $X$ is a reasonable scheme, $\Sp\cc A$ recovers the topological space $\Sp_{\textrm{fl},\otimes}(X)$
in the sense of~\cite[Theorem~19]{GG3}, which is equipped with the tensor fl-topology.

We finish the paper by introducing and studying the Ziegler spectrum $\Zg_X$ of a ``nice" scheme $X$. It is realised
in the category $\cc C_X$ of generalised quasi-coherent sheaves defined in Section~\ref{sectionscheme}. In this case
purity for $\cc A$-modules is the same with geometric purity studied in~\cite{EEO}. If $X$ is a nice coherent scheme,
then we show in Theorem~\ref{ugu} that there is a closed embedding $\Sp(X)\hookrightarrow\Zg_X$ 
of the injective spectrum of $X$ into its Ziegler spectrum. If, moreover, $X$ is noetherian then
there is a natural closed embedding 
of topological spaces $X^*\hookrightarrow\Zg_X$, where $X^*$ is equipped with the dual Zariski topology
(see Corollary~\ref{noeth}).

\begin{section}{Enriched Category Theory}\label{putrya}

In this section we collect basic facts about enriched categories we
shall need later. We refer the reader to~\cite{Bor,Kelly} for details.
Throughout this paper $(\mathcal{V},\otimes,\underline{\Hom},e)$ is
a closed symmetric monoidal category with monoidal product
$\otimes$, internal Hom-object $\underline{\Hom}$ and monoidal unit
$e$. We sometimes write $[a,b]$ to denote $\underline{\Hom}(a,b)$,
where $a,b\in\Ob\cc V$. We have structure isomorphisms 
   $$a_{abc}:(a\otimes b)\otimes c \to a\otimes (b\otimes c),\quad l_a:e\otimes a\to a,\quad r_a:a\otimes e\to a$$
in $\cc V$ with $a,b,c\in\Ob\cc V$.

\begin{defs}{\rm
A {\it $\mathcal{V}$-category}  $\mathcal{C}$, or {\it a category
enriched over $\mathcal{V}$}, consists of the following data:
\begin{enumerate}
\item a class $\Ob\mathcal{(C)}$ of objects;
\item for every pair $a,b \in$ $\Ob\mathcal{(C)}$ of objects, an object  $\mathcal{V}_{\cc C}(a,b)$ of
$\mathcal{V}$;
\item for every triple $a,b,c \in$ $\Ob\mathcal{(C)}$ of objects, a composition morphism in
  $\mathcal{V}$,
 $$c_{abc}:\mathcal{V}_{\cc C}(a,b) \otimes  \mathcal{V}_{\cc C}(b,c) \to  \mathcal{V}_{\cc C}(a,c);$$
\item for every object $a \in \mathcal{C}$, a unit morphism $u_a:e\to\mathcal{V}_{\cc C}(a,a)$ in
$\mathcal{V}$.

These data must satisfy the following conditions:
\begin{enumerate}
\item[$\diamond$] given objects $a,b,c,d\in\cc C$, diagram~\eqref{B6.9} below is commutative (associativity
axiom);
\item[$\diamond$] given objects $a,b\in\cc C$, diagram~\eqref{B6.10} below is commutative (unit axiom).
\end{enumerate}
\footnotesize
\end{enumerate}

\begin{equation} \label{B6.9}
\xymatrix{(\mathcal{V}_{\cc C}(a,b) \otimes \mathcal{V}_{\cc
C}(b,c)) \otimes \mathcal{V}_{\cc C}(c,d) \ar[rrr]^{c_{abc}\otimes
1} \ar[d]_{a_{\mathcal{V}_{\cc C}(a,b)\mathcal{V}_{\cc
C}(b,c)\mathcal{V}_{\cc C}(c,d)}} && &
 \mathcal{V}_{\cc C}(a,c) \otimes \mathcal{V}_{\cc C}(c,d) \ar[dd]^{c_{acd}}&   \\
\mathcal{V}_{\cc C}(a,b) \otimes (\mathcal{V}_{\cc C}(b,c) \otimes \mathcal{V}_{\cc C}(c,d)) \ar[d]_{1 \otimes{c_{bcd}}}  \\
\mathcal{V}_{\cc C}(a,b) \otimes \mathcal{V}_{\cc C}(b,d)
\ar[rrr]_{c_{abd}} & &
                         &  \mathcal{V}_{\cc C}(a,d) &    }
\end{equation}

\begin{equation} \label{B6.10}
\xymatrix{
         e \otimes \mathcal{V}_{\cc C}(a,b)  \ar[r]^{l_{\mathcal{V}_{\cc C}(a,b)}} \ar[dd]_{u_a \otimes 1}
 & \mathcal{V}_{\cc C}(a,b) \ar[dd]^{1_{\mathcal{V}_{\cc C}(a,b)}}
 & \mathcal{V}_{\cc C}(a,b) \otimes e\ar[l]_{r_{\mathcal{V}_{\cc C}(a,b)}} \ar[dd]^{1 \otimes u_{b}} \\ & & \\
         \mathcal{V}_{\cc C}(a,a) \otimes \mathcal{V}_{\cc C}(a,b) \ar[r]^(.6){c_{aab}}     & \mathcal{V}_{\cc C}(a,b) &
 \mathcal{V}_{\cc C}(a,b) \otimes \mathcal{V}_{\cc C}(b,b) \ar[l]_(.6){c_{abb}} }
\end{equation}
\normalsize When $\Ob\cc C$ is a set, the $\cc V$-category $\cc C$
is called a {\it small $\cc V$-category}. 

If there is no likelihood of confusion, we will often write $[a,b]$ for the $\cc V$-object $\mathcal{V}_{\cc C}(a,b)$.
}\end{defs}

\begin{defs} \label{functor}{\rm
Given $\mathcal{V}$-categories $\mathcal{A},\mathcal{B}$, a {\it
$\mathcal{V}$-functor\/} or an {\it enriched functor\/}
$F:\mathcal{A} \to \mathcal{B}$ consists in giving:
\begin{enumerate}
\item for every object $a \in \mathcal{A}$, an object $F(a) \in
\mathcal{B}$;
\item for every pair $a,b \in \mathcal{A}$ of objects, a morphism in  $\mathcal{V},$
$$F_{ab}:\mathcal{V}_{\cc A}(a,b) \to  \mathcal{V}_{\cc B}(F(a),F(b))$$
in such a way that the following axioms hold:
\begin{enumerate}
\item[$\diamond$] for all objects $a,a',a''\in\cc A$, diagram~\eqref{B6.11} below commutes (composition
axiom);
\item[$\diamond$] for every object $a\in\cc A$, diagram~\eqref{fun} below commutes (unit
axiom).
\end{enumerate}
\begin{equation}    \label{B6.11}
\xymatrix{ \cc V_\cc A(a,a')\otimes\cc V_\cc
A(a',a'')\ar[rr]^(.6){c_{aa'a''}}\ar[d]_{F_{aa'}\otimes F_{a'a''}}
&&\cc V_\cc A(a,a'')\ar[d]^{F_{aa''}}\\
\cc V_\cc B(Fa,Fa')\otimes\cc V_\cc
B(Fa',Fa'')\ar[rr]_(.6){c_{Fa,Fa',Fa''}}&&\cc V_\cc B(Fa,Fa'') }
\end{equation}
\begin{equation}    \label{fun}
\xymatrix{
e\ar[r]^(.4){u_a}\ar[dr]_{u_{Fa}}&\cc V_\cc A(a,a)\ar[d]^{F_{aa}}\\
&\cc V_\cc B(Fa,Fa) }
\end{equation}
\end{enumerate}
}\end{defs}

\begin{defs}\label{trans}{\rm
Let $\cc A, \cc B$ be two $\cc V$-categories and $F,G:\cc A\to \cc
B$ two $\cc V$-functors. A {\it $\cc V$-natural transformation\/}
$\alpha:F \Rightarrow G$ consists in giving, for every object
$a\in\cc A$, a morphism
    $$\alpha_a:e \to \cc V_\cc B(F(a),G(a))$$
in $\cc V$ such that diagram~\eqref{B6.13} below commutes, for all
objects $a,a'\in\cc A$.
\begin{equation}    \label{B6.13}
\xymatrix{
&\cc V_{\cc A}(a,a')\ar[ddl]_{l^{-1}_{\cc V_{\cc A}(a,a')}}\ar[ddr]^{r^{-1}_{\cc V_{\cc A}(a,a')}}&\\
&&\\
e\otimes\cc V_{\cc A}(a,a')\ar[d]_{\alpha_a \otimes G_{aa'}}&
&\cc V_{\cc A}(a,a')\otimes e\ar[d]^{F_{aa'}\otimes \alpha_{a'}}\\
\cc V_{\cc B}(Fa,Ga)\otimes\cc V_{\cc
B}(Ga,Ga')\ar[ddr]_{c_{FaGaGa'}\text{ }}
&&\cc V_{\cc B}(Fa,Fa')\otimes\cc V_{\cc B}(Fa',Ga')\ar[ddl]^(.45){\text{       }\text{  }  c_{FaFa'Ga'}}\\
&&\\
&\cc V_{\cc B}(Fa,Ga')& }
\end{equation}

}\end{defs}

By $\mathbf{Set}$ we shall mean the closed symmetric monoidal
category of sets. Categories in the usual sense are
$\mathbf{Set}$-categories (categories enriched over $\mathbf{Set}$).
If $\cc A$ is a category, let $\mathbf{Set}_{\cc A} (a,b)$ denote
the set of maps in $\cc A$ from $a$ to $b$. The closed symmetric
monoidal category $\cc V$ is a $\cc V$-category due to its internal
$\Hom$-objects. Any $\cc V$-category $\cc C$ defines a
$\mathbf{Set}$-category ${\cc{C}_0}$, also called the {\it
underlying category}. Its class of objects is $\Ob \cc C$, the
morphism sets are $\mathbf{Set}_{{\cc{C}_0}}{(a,b)}=\mathbf{Set}_{\cc
V}(e, \cc{V_C}{(a,b))}$ (see~\cite[p.~316]{Bor}).

\begin{prop}\label{rrr}
Let $\cc V$ be a symmetric monoidal closed category. If $\cc A$ is a
$\cc V$-category and $F,G:\cc A\rightrightarrows \cc V$ are $\cc
V$-functors, giving a $\cc V$-natural transformation
$\alpha:F\Rightarrow G$ is equivalent to giving a family of morphisms
$\alpha:F(a) \to G(a)$ in $\cc V$, for $a \in \cc A$, in such a way
that the following diagram commutes for all $a,a' \in \cc A$

$$ \xymatrix{\cc V_\cc A (a,a') \ar[r]^{F_{aa'}} \ar[d]_{G_{aa'}}
& [F(a),F(a')] \ar[d]^{[1,\alpha_{a'}]} \\
             [G(a),G(a')] \ar[r]_{[\alpha_a,1]} & [F(a),G(a')]}$$
\end{prop}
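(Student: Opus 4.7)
The strategy is to transport the enriched naturality axiom \eqref{B6.13} across the tensor--hom adjunction of $\cc V$, exploiting that $\cc V$ itself is a $\cc V$-category with hom-objects $[x,y]$ and composition induced by evaluation.

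First I would identify the underlying data. By closedness of $\cc V$, one has a natural bijection between morphisms $\alpha_a: e \to [F(a), G(a)]$ in $\cc V$ and morphisms $\alpha_a: F(a) \to G(a)$ in $\cc V$, obtained by tensoring with $F(a)$, composing with the left unitor $l_{F(a)}: e \otimes F(a) \to F(a)$, and transposing through the adjunction. Under this bijection, the component data of a $\cc V$-natural transformation as in Definition~\ref{trans} corresponds precisely to a family of morphisms $F(a) \to G(a)$ in $\cc V$, which accounts for the asserted equivalence of \emph{data}.

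Next I would show that the naturality axiom \eqref{B6.13} transposes to the commuting square in the statement. The key identity is that for the $\cc V$-category $\cc V$, the composition morphism $c_{xyz}: [x,y] \otimes [y,z] \to [x,z]$ realises ordinary composition under the adjunction; explicitly, for $h: X \to [x, y]$ in $\cc V$ and $\alpha: y \to z$ with transpose $\bar\alpha: e \to [y, z]$, one has
\[ c_{xyz} \circ (h \otimes \bar\alpha) \circ r_X^{-1} = [1, \alpha] \circ h \colon X \to [x,z], \]
and symmetrically, for $k: Y \to [y,z]$ and $\beta: x \to y$ with transpose $\bar\beta$, one has $c_{xyz} \circ (\bar\beta \otimes k) \circ l_Y^{-1} = [\beta, 1] \circ k$. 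Applying the first identity to the right-hand path of \eqref{B6.13} with $h = F_{aa'}$ and $\alpha = \alpha_{a'}$ yields $[1, \alpha_{a'}] \circ F_{aa'}$, while applying the second to the left-hand path with $k = G_{aa'}$ and $\beta = \alpha_a$ yields $[\alpha_a, 1] \circ G_{aa'}$. Hence \eqref{B6.13} commutes if and only if the square in the proposition does.

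The main obstacle is verifying the displayed transposition identity. This is a formal diagram chase unpacking the definition of $[1, \alpha]$ as an internal-Hom functoriality map and of $c_{xyz}$ in terms of evaluation $\text{ev}: [x,y] \otimes x \to y$: both sides, after transposing through the adjunction $\cc V(-, [x,z]) \cong \cc V(- \otimes x, z)$, reduce to the common composite $X \otimes x \xrightarrow{h \otimes 1} [x,y] \otimes x \xrightarrow{\text{ev}} y \xrightarrow{\alpha} z$, using naturality of evaluation and coherence of the unitors. The symmetric identity is analogous. With these in hand the proposition follows immediately.
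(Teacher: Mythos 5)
Your proposal is correct and is essentially the standard transposition argument: the paper itself offers no proof, deferring to \cite[Proposition~6.2.8]{Bor}, and the argument there is precisely this unwinding of diagram~\eqref{B6.13} through the tensor--hom adjunction, using that composition in the $\cc V$-category $\cc V$ and the internal-hom functoriality maps $[1,\alpha]$, $[\beta,1]$ are all defined via evaluation. One minor point worth noting when you carry out the diagram chase is that transposing $c_{xyz}\colon [x,y]\otimes[y,z]\to[x,z]$ to a map $[x,y]\otimes[y,z]\otimes x\to z$ uses the symmetry of $\cc V$ to reorder factors before evaluating, so the symmetric monoidal structure is genuinely used, not just closedness.
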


\begin{proof}
See~\cite[Proposition~6.2.8]{Bor}.
\end{proof}

\begin{cor}\label{good}
Let $\cc V$ be a symmetric monoidal closed category. If $ \cc A$ is
a $\cc V$-category and $F,G:\cc A \rightrightarrows \cc V$ are $\cc
V$-functors, giving a $\cc V$-natural transformation
$\alpha:F\Rightarrow G$ is equivalent to giving a family of morphisms
$\alpha:F(a) \to G(a)$ in $\cc V$, for $a \in \cc A$, in such a way
that the following diagram commutes for all $a,a' \in \cc A$
   $$ \xymatrix{\cc V_\cc A (a,a') \otimes F(a) \ar[r]^(.65){\eta_F}  \ar[d]_{1 \otimes \alpha_a} &
       F(a') \ar[d]^{\alpha_{a'}} \\ \cc V_\cc A(a,a') \otimes G(a) \ar[r]_(.65){\eta_G} & G(a'),}$$
where $\eta_F,\eta_G$ are the morphisms corresponding to the
structure morphisms $F_{aa'}$ and $G_{aa'}$ respectively.
\end{cor}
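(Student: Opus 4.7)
The plan is to deduce Corollary~\ref{good} from Proposition~\ref{rrr} by transporting the commutative square along the tensor-Hom adjunction $\cc V(X\otimes Y,Z)\cong\cc V(X,[Y,Z])$ that is provided by the closed structure on $\cc V$. Recall that for a $\cc V$-functor $F:\cc A\to\cc V$ the structure morphism $F_{aa'}:\cc V_{\cc A}(a,a')\to[F(a),F(a')]$ corresponds, under this adjunction, to a morphism $\eta_F:\cc V_{\cc A}(a,a')\otimes F(a)\to F(a')$, and similarly $G_{aa'}$ corresponds to $\eta_G$. Thus the square of Corollary~\ref{good} is literally the adjoint form of the square of Proposition~\ref{rrr}, and the claim is a bijection between commuting squares of each shape.

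Concretely, I would first observe that both composites in the square of Corollary~\ref{good} are morphisms $\cc V_{\cc A}(a,a')\otimes F(a)\to G(a')$, so by the adjunction they correspond bijectively to morphisms $\cc V_{\cc A}(a,a')\to[F(a),G(a')]$. Next I would compute these adjoints. The naturality of the adjunction (i.e.\ the fact that the bijection $\cc V(X\otimes Y,Z)\cong\cc V(X,[Y,Z])$ is natural in all three variables, and in particular that post- and pre-composition correspond to $[1,-]$ and $[-,1]$) shows that the adjoint of $\alpha_{a'}\circ\eta_F$ is $[1,\alpha_{a'}]\circ F_{aa'}$, while the adjoint of $\eta_G\circ(1\otimes\alpha_a)$ is $[\alpha_a,1]\circ G_{aa'}$. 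Consequently, the square of Corollary~\ref{good} commutes if and only if the square of Proposition~\ref{rrr} commutes.

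With this bijection in hand, I would conclude by invoking Proposition~\ref{rrr}: a family $(\alpha_a:F(a)\to G(a))_{a\in\cc A}$ defines a $\cc V$-natural transformation $F\Rightarrow G$ iff the internal-Hom square commutes for all $a,a'$, iff (by the above) the tensor square commutes for all $a,a'$, which is the required reformulation.

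The only delicate point is to check that the adjunction indeed converts $\alpha_{a'}\circ\eta_F$ into $[1,\alpha_{a'}]\circ F_{aa'}$ and $\eta_G\circ(1\otimes\alpha_a)$ into $[\alpha_a,1]\circ G_{aa'}$; this is a direct naturality verification, but one must be careful with the placement of the unitors and with the convention $\eta_F$ used in the statement (the evaluation-style morphism associated to $F_{aa'}$). Once this bookkeeping is carried out the rest is formal.
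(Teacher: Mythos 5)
Your proof is correct and is exactly the intended derivation: the paper states Corollary~\ref{good} without proof as the adjoint form of Proposition~\ref{rrr}, and your argument — transporting the internal-Hom square to the tensor square via naturality of the closed-structure adjunction in the second and third variables — is the standard way to make that precise. The identifications of $\alpha_{a'}\circ\eta_F$ with $[1,\alpha_{a'}]\circ F_{aa'}$ and of $\eta_G\circ(1\otimes\alpha_a)$ with $[\alpha_a,1]\circ G_{aa'}$ are the right naturality computations, so nothing is missing.
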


\begin{cor}\label{wow}
Let $\cc V$ be a symmetric monoidal closed category. If $\cc A$ is a
small $\cc V$-category and $F,G:\cc A\rightrightarrows \cc V$ are
$\cc V$-functors, suppose $\alpha:F\Rightarrow G$ is a $\cc
V$-natural transformation such that each $\alpha_a:F(a) \to G(a)$,
$a\in\Ob\cc A$, is an isomorphism in $\cc V$. Then $\alpha$ is an
isomorphism in $[\cc A,\cc V]$.
\end{cor}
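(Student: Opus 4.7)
The plan is to exhibit an explicit inverse $\beta:G\Rightarrow F$ of $\alpha$ in the functor $\cc V$-category $[\cc A,\cc V]$ by setting $\beta_a:=\alpha_a^{-1}:G(a)\to F(a)$ for each $a\in\Ob\cc A$, which makes sense since every $\alpha_a$ is invertible in $\cc V$. The bulk of the work is then to verify that the family $\{\beta_a\}$ is itself a $\cc V$-natural transformation; once that is done, the identities $\alpha\beta=\id_G$ and $\beta\alpha=\id_F$ in $[\cc A,\cc V]$ are immediate from componentwise invertibility.

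To check $\cc V$-naturality of $\beta$, I would use the characterisation in Corollary~\ref{good}. That is, it suffices to establish commutativity of the diagram
$$\xymatrix{\cc V_{\cc A}(a,a')\otimes G(a) \ar[r]^(.65){\eta_G} \ar[d]_{1\otimes\beta_a} & G(a') \ar[d]^{\beta_{a'}}\\
\cc V_{\cc A}(a,a')\otimes F(a)\ar[r]_(.65){\eta_F} & F(a')}$$
for all $a,a'\in\cc A$, where $\eta_F,\eta_G$ are the structure morphisms associated to $F_{aa'},G_{aa'}$ respectively. By hypothesis, the analogous square for $\alpha$ commutes, so $\alpha_{a'}\circ\eta_F=\eta_G\circ(1\otimes\alpha_a)$. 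Pre-composing with $1\otimes\alpha_a^{-1}$ and post-composing with $\alpha_{a'}^{-1}$ yields $\eta_F\circ(1\otimes\alpha_a^{-1})=\alpha_{a'}^{-1}\circ\eta_G$, which is exactly the required identity for $\beta_a=\alpha_a^{-1}$, $\beta_{a'}=\alpha_{a'}^{-1}$.

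Having verified naturality, $\beta$ is a morphism in $[\cc A,\cc V]$. Composition of $\cc V$-natural transformations is computed componentwise, so $(\alpha\circ\beta)_a=\alpha_a\circ\alpha_a^{-1}=\id_{G(a)}$ and $(\beta\circ\alpha)_a=\id_{F(a)}$ for every $a$, i.e.\ $\alpha\circ\beta=\id_G$ and $\beta\circ\alpha=\id_F$. Hence $\alpha$ is an isomorphism in $[\cc A,\cc V]$.

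The only nontrivial step is the naturality check for $\beta$; everything else is formal. Fortunately that check is a short diagram chase in $\cc V$, relying solely on the fact that $\alpha_a$ and $\alpha_{a'}$ admit two-sided inverses so that the commutative square for $\alpha$ can be conjugated into the commutative square for $\beta$. No finiteness or smallness of $\cc A$ is actually used in the argument beyond what is needed to make $[\cc A,\cc V]$ a well-defined $\cc V$-category in the first place.
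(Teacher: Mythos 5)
Your proof is correct. The paper does not spell out an argument here — it just cites \cite[Corollary~2.6]{AG} — so you have supplied the standard self-contained proof: define $\beta_a:=\alpha_a^{-1}$, use Corollary~\ref{good} to reduce $\cc V$-naturality of $\beta$ to a diagram in $\cc V$, obtain that diagram by conjugating the one for $\alpha$ by the invertible maps $1\otimes\alpha_a$ and $\alpha_{a'}$, and observe that composition of $\cc V$-natural transformations between functors into $\cc V$ is computed componentwise. This is exactly the argument one expects behind the citation, and no step is missing; the only implicit fact you use (that $1\otimes\alpha_a^{-1}=(1\otimes\alpha_a)^{-1}$) is immediate from functoriality of $\otimes$.
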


\begin{proof}
See~\cite[Corollary~2.6]{AG}.
\end{proof}

Let $\cc C, \cc D$ be two $\cc V$-categories. The {\it monoidal
product\/} $\cc C\otimes \cc D$ is the $\cc V$-category, where
   $$\Ob(\cc C\otimes \cc D):=\Ob\cc C\times\Ob\cc D$$
and
   $$\cc V_{\cc C\otimes \cc D}((a, x),(b, y)):=\cc V_{\cc C}(a, b)\otimes\cc V_{\cc D}(x, y),\quad a,b\in\cc C,x,y\in\cc D.$$

\begin{defs}\label{enri}{\rm
A $\cc V$-category $\cc C$ is a {\it right $\cc V$-module} if there
is a $\cc V$-functor $\act:\cc C\otimes \cc V \to \cc C$, denoted
$(c,A) \mapsto c\oslash A$ and a $\cc V$-natural unit isomorphism
$r_c:\act(c,e)\to c$ subject to the following conditions:
\begin{enumerate}
\item there are coherent natural associativity isomorphisms $c\oslash (A \otimes B) \to (c \oslash A) \otimes
B$;
\item the isomorphisms $c\oslash (e \otimes A)\rightrightarrows c \oslash A$ coincide.

\end{enumerate}
A right $\cc V$-module is {\it closed\/} if there is a $\cc
V$-functor
$$\coact:\cc V^{\op} \otimes \cc C \to \cc C$$
such that for all $A \in \Ob \cc V$, and $c \in \Ob\cc C$, the $\cc
V$-functor $\act(-,A):\cc C \to \cc C$ is left
 $\cc V$-adjoint to $\coact(A,-)$ and $\act(c,-):\cc V \to \cc C$
is left $\cc V$-adjoint to $\cc V_{\cc C}(c,-)$. }
\end{defs}

If $\cc C$ is a small $\cc V$-category, $\cc V$-functors from $\cc
C$ to $\cc V$ and their $\cc V$-natural transformations form the
category $[\cc C,\cc V]$ of $\cc V$-functors from $\cc C$ to $\cc
V$. If $\cc V$ is complete, then $[\cc C,\cc V]$ is also a $\cc
V$-category. We also denote this $\cc V$-category by
$\cc{F(C)}$, or $\cc F$ if no confusion can arise. The morphism $\cc
V$-object $\cc{V_F}(X,Y)$ is the end
   \begin{equation}\label{theend}
    \int_{\Ob \cc C} \cc V (X(c),Y(c)).
   \end{equation}
Note that the underlying category $\cc{F}_0$ of the $\cc V$-category
$\cc F$ is $[\cc C,\cc V]$.

Given $c \in \Ob \cc C$, $X\mapsto X(c)$ defines the $\cc V$-functor
$\text{Ev}_c:\cc F\to\cc V$ called {\it evaluation at c}. The
assignment $c\mapsto\cc V_\cc C (c,-)$ from $\cc C$ to $\cc F$ is
again a $\cc V$-functor $\cc C^{\op}\to \cc F$, called the {\it $\cc
V$-Yoneda embedding}. $\cc V_\cc C (c,-)$ is a representable
functor, represented by $c$.

\begin{lem}[The Enriched Yoneda Lemma]\label{enryon}
Let $\cc V$ be a complete closed symmetric monoidal category and
$\cc C$ a small $\cc V$-category. For every $\cc V$-functor $X:\cc C
\to \cc V$ and every $c\in \Ob \cc C$, there is a $\cc V$-natural
isomorphism $X(c) \cong \cc V_\cc F (\cc V_\cc C (c,-),X)$.
\end{lem}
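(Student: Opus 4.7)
The plan is to build mutually inverse morphisms $\Phi\colon \cc V_{\cc F}(\cc V_\cc C(c,-),X)\to X(c)$ and $\Psi\colon X(c)\to\cc V_{\cc F}(\cc V_\cc C(c,-),X)$ in $\cc V$ and then verify $\cc V$-naturality.

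First I would construct $\Phi$. By the definition~\eqref{theend} the hom-object on the left is an end, so it comes equipped with a projection $\pi_c$ to the $d=c$ factor $\cc V(\cc V_\cc C(c,c),X(c))$. The unit $u_c\colon e\to\cc V_\cc C(c,c)$ together with the evaluation $\cc V_\cc C(c,c)\otimes\cc V(\cc V_\cc C(c,c),X(c))\to X(c)$ and the left-unit isomorphism produces a canonical morphism $\cc V(\cc V_\cc C(c,c),X(c))\to X(c)$; composing with $\pi_c$ yields $\Phi$.

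Next I would construct $\Psi$ via the universal property of the end: one must supply, for each $d\in\Ob\cc C$, a morphism $\Psi_d\colon X(c)\to\cc V(\cc V_\cc C(c,d),X(d))$, and then check that the family $\{\Psi_d\}$ is dinatural. I would obtain $\Psi_d$ by transposing across the internal-hom adjunction the composite
$$\cc V_\cc C(c,d)\otimes X(c)\xrightarrow{X_{cd}\otimes 1}\cc V(X(c),X(d))\otimes X(c)\xrightarrow{\mathrm{ev}}X(d),$$
where $X_{cd}$ is the structure map of the $\cc V$-functor $X$. The verification that the family $\{\Psi_d\}$ factors through the end is precisely an instance of the composition axiom~\eqref{B6.11} for $X$, unwound via the tensor-hom adjunction.

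It remains to check $\Phi\Psi=\id_{X(c)}$ and $\Psi\Phi=\id$. The first identity follows immediately from the unit axiom~\eqref{fun}, which forces $X_{cc}$ to send $u_c$ to $u_{X(c)}$. The second is checked componentwise, since morphisms into the end are detected by the projections $\pi_d$; after composing with $\pi_d$ and transposing across the adjunction, the claim reduces once more to the composition axiom~\eqref{B6.11}. Finally, $\cc V$-naturality in $X$ (and functoriality in $c$) is formal, as both $\Phi$ and $\Psi$ are assembled from the universal structure morphisms of the end together with the counit of the tensor-hom adjunction, and these are $\cc V$-natural by construction. The main obstacle is the dinaturality verification for $\{\Psi_d\}$, since it is the only step where the enriched functoriality of $X$—and not merely pointwise data—genuinely enters; every other step is a routine diagram chase with units, associators and the adjunction isomorphisms.
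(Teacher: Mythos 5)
The paper does not supply a proof of this lemma; it is quoted as a standard result of enriched category theory (it is, e.g., \cite[Theorem~6.3.5]{Bor} and \cite[Section~1.9, (1.48)]{Kelly}). Your argument is the standard textbook proof of that result: build $\Phi$ by projecting the end onto its $d=c$ component and evaluating at the unit $u_c$, build $\Psi_d$ by transposing $\mathrm{ev}\circ(X_{cd}\otimes 1)$ across the tensor-hom adjunction, verify the wedge condition and $\Psi\Phi=\mathrm{id}$ using the composition axiom~\eqref{B6.11} for the $\cc V$-functor $X$, and verify $\Phi\Psi=\mathrm{id}$ using the unit axiom~\eqref{fun}. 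This is correct and is exactly the argument the cited sources give, so there is nothing to add beyond noting that your appeal to the enriched end's universal property (detected componentwise via the $\pi_d$) presupposes that $\cc V$ is complete, which the hypotheses provide.
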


\begin{lem}\label{bicomplete}
If $\cc V$ is a bicomplete closed symmetric monoidal category and
$\cc C$ is a small $\cc V$-category, then $[\cc C,\cc V]$ is
bicomplete. (Co)limits are formed pointwise. Moreover,
$\cc F$ is a closed $\cc V$-module.
\end{lem}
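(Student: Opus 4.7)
The plan is threefold: construct small limits and colimits in the underlying category $\cc F_0 = [\cc C,\cc V]_0$ pointwise, define the action $\oslash$ and coaction $\coact$ pointwise, and deduce the required adjunctions from the end formula~\eqref{theend}.

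For bicompleteness, let $D : I \to \cc F_0$ be a small diagram. Define $L \in \cc F$ by $L(c) := \lim_{i} D(i)(c)$, which exists in $\cc V$ by hypothesis. To equip $L$ with an enriched structure map $L_{ab} : \cc V_{\cc C}(a,b) \to [L(a), L(b)]$, one proceeds as follows: for each $i$ the structure map $D(i)_{ab}$ transposes via the tensor--hom adjunction in $\cc V$ to a morphism $\cc V_{\cc C}(a,b) \otimes D(i)(a) \to D(i)(b)$; precomposing with the projection $L(a) \to D(i)(a)$ yields a compatible cone over $D(-)(b)$ which factors uniquely through $L(b)$, and transposing back produces $L_{ab}$. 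The composition and unit axioms for $L$ are forced by the uniqueness in the universal property of $L(b)$. Corollary~\ref{good} then shows that the projections $L \to D(i)$ are $\cc V$-natural, and a routine verification identifies $L$ with the limit of $D$ in $\cc F_0$. Pointwise colimits are constructed dually.

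For the closed $\cc V$-module structure, define
\[
(F \oslash A)(c) := F(c) \otimes A, \qquad \coact(A, F)(c) := [A, F(c)],
\]
with enriched functoriality in $c$ inherited pointwise from the closed symmetric monoidal structure on $\cc V$. The unit and associativity isomorphisms of Definition~\ref{enri} are induced objectwise from those of $\cc V$ and are $\cc V$-natural by Corollary~\ref{wow}. Both required adjunctions follow from~\eqref{theend} and the internal tensor--hom adjunction: starting from
\[
\cc V_\cc F(F \oslash A, G) = \int_{c} [F(c) \otimes A, G(c)],
\]
one rewrites the integrand as $[A,[F(c),G(c)]]$ and uses that $[A,-]$ preserves ends to obtain $[A, \cc V_\cc F(F, G)]$, giving $\act(F,-) \dashv \cc V_\cc F(F,-)$; rewriting the integrand instead as $[F(c),[A,G(c)]]$ yields $\cc V_\cc F(F, \coact(A, G))$, hence $\act(-, A) \dashv \coact(A, -)$.

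The main technical point is producing the enriched, rather than merely set-level, structure morphisms of the pointwise limit and verifying the naturality of the adjunction isomorphisms internally to $\cc V$. This requires working throughout via ends and tensor--hom adjunctions, which is the standard framework set up in~\cite{Kelly, Bor}; the remaining coherence checks can be imported from there.
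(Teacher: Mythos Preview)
Your proposal is correct; the paper's own proof consists entirely of the citation ``See~\cite[Proposition~6.6.17]{Bor}'', so you are in effect supplying the details behind that reference. The pointwise construction of (co)limits and the tensor/cotensor, together with the end manipulations for the two adjunctions, is exactly the standard argument recorded in~\cite{Bor,Kelly}.
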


\begin{proof}
See~\cite[Proposition~6.6.17]{Bor}.
\end{proof}

\begin{thm}\label{polezno}
Assume $\cc V$ is bicomplete, and let $\cc C$ be a small $\cc
V$-category. Then any $\cc V$-functor $X:\cc C\to\cc V$ is $\cc
V$-naturally isomorphic to the coend
    $$X\cong\int^{\Ob\cc C}\cc V_\cc C(c,-)\oslash X(c).$$
\end{thm}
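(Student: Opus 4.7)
The plan is to compute both sides of the purported isomorphism through the enriched contravariant representable $\cc V_\cc F(-,Y)$ and then invoke the $\cc V$-Yoneda lemma. Concretely, I would show that for every $Y\in\cc F=[\cc C,\cc V]$ there is an isomorphism
\[
\cc V_\cc F\Bigl(\int^{\Ob\cc C}\cc V_\cc C(c,-)\oslash X(c),\,Y\Bigr)\;\cong\;\cc V_\cc F(X,Y)
\]
in $\cc V$, $\cc V$-natural in $Y$. Since this says that $X$ and the coend represent the same $\cc V$-functor $\cc F^{\op}\to\cc V$, the $\cc V$-Yoneda lemma (Lemma~\ref{enryon}) delivers the required $\cc V$-natural isomorphism $X\cong\int^{\Ob\cc C}\cc V_\cc C(c,-)\oslash X(c)$ in $\cc F$.

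The computation proceeds by the chain of natural $\cc V$-isomorphisms
\begin{multline*}
\cc V_\cc F\Bigl(\int^{\Ob\cc C}\cc V_\cc C(c,-)\oslash X(c),\,Y\Bigr)\;\cong\;\int_{\Ob\cc C}\cc V_\cc F\bigl(\cc V_\cc C(c,-)\oslash X(c),\,Y\bigr) \\
\cong\;\int_{\Ob\cc C}\bigl[X(c),\,\cc V_\cc F(\cc V_\cc C(c,-),\,Y)\bigr]\;\cong\;\int_{\Ob\cc C}[X(c),\,Y(c)]\;\cong\;\cc V_\cc F(X,Y),
\end{multline*}
whose successive steps are: the enriched universal property of the coend (turning a $\cc V$-hom out of a coend into an end of $\cc V$-homs); the closed $\cc V$-module adjunction $\act(-,A)\dashv\coact(A,-)$ of Definition~\ref{enri}, available on $\cc F$ by Lemma~\ref{bicomplete}; the enriched Yoneda Lemma~\ref{enryon} giving $\cc V_\cc F(\cc V_\cc C(c,-),Y)\cong Y(c)$; and formula~\eqref{theend} for the enriched Hom-object in $\cc F$. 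Each individual step is standard and visibly $\cc V$-natural in $Y$.

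The main obstacle is tracing the chain to identify the $\cc V$-natural transformation $\int^{\Ob\cc C}\cc V_\cc C(c,-)\oslash X(c)\to X$ that, under the above chain with $Y=X$, corresponds to $\id_X\in\cc V_\cc F(X,X)$: it should turn out to be the canonical cowedge whose $c'$-component is the morphism $\cc V_\cc C(c,c')\oslash X(c)\to X(c')$ adjoint to the structure map $X_{cc'}\colon\cc V_\cc C(c,c')\to[X(c),X(c')]$. This identification is a short diagram chase through the unit of the adjunction $\act\dashv\coact$, the enriched Yoneda isomorphism, and the composition axiom~\eqref{B6.11} together with the unit axiom~\eqref{fun} for the $\cc V$-functor $X$. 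Once the canonical map has been recognised, the Yoneda argument above shows that it is an isomorphism, completing the proof.
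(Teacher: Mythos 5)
Your proof is correct, and in fact the paper itself gives no argument at all for this statement: it simply cites Borceux, \cite[Theorem~6.6.18]{Bor}. Your representability argument (compute $\cc V_\cc F(-,Y)$ of both sides, identify the results via the enriched Yoneda lemma, the closed-module adjunctions, and the end formula~\eqref{theend}, then invoke Yoneda to conclude) is the standard route, and it is essentially the one Borceux takes. The last paragraph, identifying the unit with the canonical cowedge built from the structure maps $X_{cc'}$, is not strictly required for the statement as phrased (which only asks for some $\cc V$-natural isomorphism) but is a worthwhile sanity check.

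One small slip in the write-up: the adjunction you actually apply in the second step, $\cc V_\cc F\bigl(\cc V_\cc C(c,-)\oslash X(c),\,Y\bigr)\cong\bigl[X(c),\,\cc V_\cc F(\cc V_\cc C(c,-),Y)\bigr]$, is the adjunction $\act(F,-)\dashv\cc V_\cc F(F,-)$ from Definition~\ref{enri} (with $F=\cc V_\cc C(c,-)$), not $\act(-,A)\dashv\coact(A,-)$ as you name it in the prose. The latter would instead give $\cc V_\cc F\bigl(\cc V_\cc C(c,-),\coact(X(c),Y)\bigr)$, which also works but leads to a slightly different chain (Yoneda would then produce $\coact(X(c),Y)(c)=[X(c),Y(c)]$ directly). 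Either route is fine; just match the citation to the adjunction you actually use.
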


\begin{proof}
See~\cite[Theorem~6.6.18]{Bor}.
\end{proof}

A {\it monoidal $\cc V$-category\/} is a $\cc V$-category $\cc C$
together with a $\cc V$-functor $\diamond:\cc C\otimes\cc C\to\cc
C$, a unit $u\in\Ob\cc C$, a $\cc V$-natural associativity
isomorphism and two $\cc V$-natural unit isomorphisms. Symmetric
monoidal and closed symmetric monoidal $\cc V$-categories are
defined similarly.

Suppose $(\cc C,\diamond,u)$ is a small symmetric monoidal $\cc
V$-category, where $\cc V$ is bicomplete. In~\cite{Day}, a closed
symmetric monoidal product was constructed on the category $[\cc
C,\cc V]$ of $\cc V$-functors from $\cc C$ to $\cc V$. For
$X,Y\in\Ob [\cc C,\cc V]$, the monoidal product  $X\odot Y\in\Ob[\cc
C,\cc V]$ is the coend
\begin{equation}\label{mon}
 X\odot Y:=\int^{\Ob (\cc C\otimes\cc C)}\cc V_\cc C (c\diamond d,-)\otimes (X(c)\otimes Y(d)):\cc C\to\cc V.
\end{equation}

The following theorem is due to Day~\cite{Day} and plays an
important role in our analysis.

\begin{thm}[Day~\cite{Day}]\label{daythm}
Let $(\cc V,\otimes ,e)$ be a bicomplete closed symmetric monoidal
category and $(\cc C,\diamond,u)$ a small symmetric monoidal $\cc
V$-category. Then $([\cc C,\cc V],\odot,\cc V_\cc C (u,-))$ is a
closed symmetric monoidal category. The  internal $\Hom$-functor in
$[\cc C,\cc V]$ is given by the end
\begin{equation}\label{inthom}
  \cc F(X,Y)(c)=\cc V_\cc F(X,Y(c\diamond-))=\int_{d\in\Ob\cc C}\cc V(X(d),Y(c\diamond d)).
\end{equation}
\end{thm}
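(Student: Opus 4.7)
The plan is to construct the Day convolution monoidal structure directly via the coend~\eqref{mon} and the end~\eqref{inthom}, and then to verify each of the monoidal and closedness axioms using the calculus of ends and coends in $\cc V$. Bicompleteness of $\cc V$ (Lemma~\ref{bicomplete}) ensures that all required (co)ends exist and are computed pointwise in $[\cc C,\cc V]$. The principal technical device throughout is that the contravariant $\cc V$-Hom sends coends to ends, together with the enriched Yoneda reduction $\int_c[\cc V_\cc C(c_0,c),W(c)]\cong W(c_0)$, which is the end-theoretic form of Lemma~\ref{enryon}.

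First I would check that $\odot$ and $\cc F(-,-)$ are $\cc V$-bifunctors $[\cc C,\cc V]\otimes[\cc C,\cc V]\to[\cc C,\cc V]$; this is routine from the universal properties of the (co)ends involved. Symmetry $X\odot Y\cong Y\odot X$ follows by swapping the variables $c$ and $d$ in~\eqref{mon} using the symmetry isomorphisms of $(\cc C,\diamond)$ and $(\cc V,\otimes)$, with coherence inherited coordinate-wise. For the unit axiom, expand
\begin{equation*}
\cc V_\cc C(u,-)\odot X=\int^{c,d}\cc V_\cc C(c\diamond d,-)\otimes\bigl(\cc V_\cc C(u,c)\otimes X(d)\bigr),
\end{equation*}
apply Fubini, and collapse the $c$-coend by the co-Yoneda reduction $\int^c\cc V_\cc C(u,c)\otimes\cc V_\cc C(c\diamond d,-)\cong\cc V_\cc C(u\diamond d,-)\cong\cc V_\cc C(d,-)$. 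The remaining coend $\int^d\cc V_\cc C(d,-)\otimes X(d)$ equals $X$ by Theorem~\ref{polezno}.

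For associativity I would expand both $(X\odot Y)\odot Z$ and $X\odot(Y\odot Z)$ into iterated coends, apply Fubini to reach the common triple coend $\int^{c,d,f}\cc V_\cc C((c\diamond d)\diamond f,-)\otimes X(c)\otimes Y(d)\otimes Z(f)$, and build the associator from the associativity $(c\diamond d)\diamond f\cong c\diamond(d\diamond f)$ in $\cc C$ combined with the associator of $\otimes$ in $\cc V$. The pentagon and triangle coherences then reduce to the corresponding coherences in $\cc C$ and $\cc V$, since every comparison is obtained by pasting $\cc V$-natural transformations induced by those primitive data.

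The main obstacle is proving closedness, i.e., that $-\odot Y$ is left $\cc V$-adjoint to $\cc F(Y,-)$ for each $Y$. Starting from
\begin{equation*}
\cc V_\cc F(X\odot Y,Z)=\int_c\Bigl[\int^{c',d}\cc V_\cc C(c'\diamond d,c)\otimes X(c')\otimes Y(d),\ Z(c)\Bigr],
\end{equation*}
I would push the inner coend out through $[-,Z(c)]$, which converts it into an end over $(c',d)$, apply the $\otimes$--$[-,-]$ adjunction of $\cc V$ twice to move $X(c')$ and $Y(d)$ into the second slot, and then collapse the end over $c$ using $\int_c[\cc V_\cc C(c'\diamond d,c),[Y(d),Z(c)]]\cong[Y(d),Z(c'\diamond d)]$ from the enriched Yoneda lemma. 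A final Fubini step yields $\int_{c'}[X(c'),\int_d[Y(d),Z(c'\diamond d)]]=\cc V_\cc F(X,\cc F(Y,Z))$. Performing the same string of manipulations inside the internal $\Hom$ of $\cc V$ promotes this bijection to a $\cc V$-natural isomorphism; the care needed here lies entirely in tracking that each step is $\cc V$-natural in all three variables, after which the remaining compatibility axioms of a closed symmetric monoidal $\cc V$-category follow by uniqueness of adjoints.
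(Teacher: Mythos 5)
The paper does not prove Theorem~\ref{daythm}; it cites Day's original paper and uses the statement as a black box, so there is no in-paper proof to compare against. Your reconstruction is the standard end/coend argument for Day convolution and is essentially correct: the unit law via the contravariant co-Yoneda reduction $\int^c\cc V_\cc C(u,c)\otimes\cc V_\cc C(c\diamond d,-)\cong\cc V_\cc C(u\diamond d,-)$ followed by Theorem~\ref{polezno}, associativity via Fubini to a common triple coend with coherence imported from $\cc C$ and $\cc V$, and closedness via the chain
\begin{equation*}
\cc V_\cc F(X\odot Y,Z)\cong\int_{c',d}\bigl[X(c'),[Y(d),Z(c'\diamond d)]\bigr]\cong\cc V_\cc F\bigl(X,\cc F(Y,Z)\bigr),
\end{equation*}
obtained by pushing the coend through $[-,Z(c)]$, applying the $\otimes$--$[-,-]$ adjunction, collapsing $\int_c[\cc V_\cc C(c'\diamond d,c),Z(c)]\cong Z(c'\diamond d)$ by enriched Yoneda, and Fubini. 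This is exactly how the isomorphism \eqref{inthom} arises, and it matches the argument one finds in Day's paper.

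Two places where you are compressed but not wrong, and which would need to be spelled out in a full write-up: (i) the claim that pentagon and triangle coherence are ``inherited coordinate-wise'' should be justified by noting that each comparison isomorphism is induced, under the universal property of the coend, by a uniquely determined $\cc V$-natural transformation built from the associators/unitors of $\cc C$ and $\cc V$, so coherence in the convolution reduces to coherence in those two categories; (ii) $\cc V$-naturality of the adjunction isomorphism in all three variables, which you flag, is genuinely the technical heart of the proof and is what upgrades the underlying bijection on hom-sets to a $\cc V$-adjunction — it follows because every step in the chain above is obtained from $\cc V$-natural isomorphisms of $\cc V$ itself (symmetry, hom--tensor adjunction, Yoneda, Fubini), each of which is dinatural in the bound variables and $\cc V$-natural in the free ones.
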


The next lemma computes the tensor product of representable
$\cc V$-functors.

\begin{lem}\label{winner}
The tensor product of representable functors is again representable.
Precisely, there is a natural isomorphism
   $$\cc V_\cc C(c,-)\odot\cc V_\cc C (d,-)\cong\cc V_\cc C(c\diamond d,-).$$
\end{lem}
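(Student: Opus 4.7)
The strategy is to evaluate both sides at an arbitrary $a\in\Ob\cc C$, reduce the defining double coend of~\eqref{mon} by two applications of the co-Yoneda identity, and then promote the resulting pointwise isomorphism to a $\cc V$-isomorphism of functors. An alternative route via Yoneda reduction in $[\cc C,\cc V]$ and the closed structure of the Day convolution (Theorem~\ref{daythm}) is equally viable, but the direct coend manipulation below is more elementary.

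Starting from Day's formula~\eqref{mon}, the left-hand side evaluated at $a$ reads
\[
\bigl(\cc V_\cc C(c,-)\odot\cc V_\cc C(d,-)\bigr)(a)\;=\;\int^{(x,y)\in\Ob(\cc C\otimes\cc C)}\cc V_\cc C(x\diamond y,a)\otimes\cc V_\cc C(c,x)\otimes\cc V_\cc C(d,y).
\]
By Fubini for coends this becomes an iterated coend, first in $y$, then in $x$. Because $\cc V_\cc C(-,a)$ is contravariant and $\diamond$ is a $\cc V$-functor in both slots, for fixed $x$ the assignment $y\mapsto\cc V_\cc C(x\diamond y,a)$ is a $\cc V$-functor $\cc C^\op\to\cc V$. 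Applying Theorem~\ref{polezno} to the opposite category $\cc C^\op$ yields the contravariant form of co-Yoneda,
\[
G(d)\;\cong\;\int^y\cc V_\cc C(d,y)\otimes G(y)\qquad\text{for every }\cc V\text{-functor }G\colon\cc C^\op\to\cc V,
\]
which collapses the inner coend to $\cc V_\cc C(x\diamond d,a)\otimes\cc V_\cc C(c,x)$. Running the same argument on the outer coend in $x$, this time with $H(x)=\cc V_\cc C(x\diamond d,a)$ playing the role of $G$, produces $\cc V_\cc C(c\diamond d,a)=\cc V_\cc C(c\diamond d,-)(a)$.

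The points that warrant attention are the following: (i) Fubini for coends holds in the enriched setting because coends in a bicomplete $\cc V$ are computed as coequalisers and so commute with one another, cf.\ Kelly~\cite{Kelly}; (ii) the contravariant form of co-Yoneda used twice above is obtained from Theorem~\ref{polezno} applied to $\cc C^\op$, whose hom-objects are $\cc V_\cc C(a,-)$ rather than $\cc V_\cc C(-,a)$; and (iii) each of the isomorphisms in the chain — Fubini, the universal maps out of the coends, and the two co-Yoneda isomorphisms — is manifestly $\cc V$-natural in $a$, so that Corollary~\ref{wow} lifts the composite pointwise isomorphism to the desired $\cc V$-isomorphism $\cc V_\cc C(c,-)\odot\cc V_\cc C(d,-)\cong\cc V_\cc C(c\diamond d,-)$ in $[\cc C,\cc V]$.
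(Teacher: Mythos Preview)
Your proof is correct. The paper states Lemma~\ref{winner} without proof, treating it as a standard fact about Day convolution (it goes back to Day~\cite{Day}), so there is nothing to compare against. Your direct coend computation via two applications of co-Yoneda (Theorem~\ref{polezno} applied to $\cc C^{\op}$) together with Fubini is the standard elementary argument; the only mild sloppiness is the appeal to Corollary~\ref{wow}, which presupposes that the pointwise isomorphism already assembles into a $\cc V$-natural transformation, but this is indeed the case since each step (Fubini, co-Yoneda) is enriched-natural. The alternative you allude to---using the internal hom formula~\eqref{inthom} and the enriched Yoneda Lemma~\ref{enryon} to show that both sides represent the same $\cc V$-functor $Y\mapsto Y(c\diamond d)$ on $[\cc C,\cc V]$---is equally short and has the slight advantage that $\cc V$-naturality in all variables is automatic.
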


\end{section}

\begin{section}{Grothendieck categories}

In this section we collect basic facts about Grothendieck
categories. We mostly follow Herzog~\cite{Herzog} and
Stenstr\"om~\cite{Stenstroem}.

\begin{defs}{\rm
A family $\{U_i\}_I$ of objects of an Abelian category $\cc A$ is a
{\it family of generators\/} for $\cc A$ if for each non-zero
morphism $\alpha:B \to C$ in $\cc A$ there exist a morphism $\beta:
U_i\to B$ for some $i \in I$, such that $\alpha \beta \ne 0$.

}\end{defs}

Recall that an Abelian category is {\it cocomplete} or an {\it
Ab3-category} if it has arbitrary direct sums. The cocomplete
Abelian category $\cc C$ is said to be an {\it Ab5-category} if for
any directed family $\{A_i\}_{i\in I}$ if subobjects of $A$ and for
any subobject $B$ of $A$, one has
    $$(\sum_{i\in I}A_i)\cap B=\sum_{i\in I}(A_i \cap B).$$

The condition $Ab3$ is equivalent to the existance of arbitrary
direct limits. Also $Ab5$ is equivalent  to the fact that there
exist inductive limits and the inductive limits over directed
families of indices are exact, i.e. if $I$ is a directed set and
\begin{equation*}
\xymatrix{
0\ar[r]&A_i\ar[r]&B_i\ar[r]&C_i\ar[r]&0
}
\end{equation*}
is an exact sequence for any $i\in I$, then
\begin{equation*}
\xymatrix{
0\ar[r]&\varinjlim A_i\ar[r]&\varinjlim B_i\ar[r]&\varinjlim C_i\ar[r]&0
}
\end{equation*}
is an exact sequence.

An Abelian category which satisfies the condition $Ab5$ and which
possesses a family of generators is called a {\it Grothendieck
category}.

\begin{defs} {\rm
Recall that an object $A\in\cc C$ is {\em finitely generated\/} if
whenever there are subobjects $A_i\subseteq A$ for $i\in I$
satisfying $A=\sum_{i\in I}A_i$, then there is a finite subset
$J\subset I$ such that $A=\sum_{i\in J}A_i$. The category of
finitely generated subobjects of $\cc C$ is denoted by $\fg\cc C$.
The category is {\em locally finitely generated\/} provided that
every object $X\in\cc C$ is a directed sum $X=\sum_{i\in I}X_i$ of
finitely generated subobjects $X_i$, or equivalently, $\cc C$
possesses a family of finitely generated generators.}
\end{defs}

\begin{defs} {\rm
A finitely generated object $B\in\cc C$ is {\em finitely
presented\/} provided that every epimorphism $\eta:A\to B$ with $A$
finitely generated has a finitely generated kernel $\kr\eta$. The
subcategory of finitely presented objects of $\cc C$ is denoted by
$\fp\cc C$. Note that the
subcategory $\fp\cc C$ of $\cc C$ is closed under extensions.
Moreover, if
   $$0\to A\to B\to C\to 0$$
is a short exact sequence in $\cc C$ with $B$ finitely presented,
then $C$ is finitely presented \ifff $A$ is finitely generated. The
category is {\em locally finitely presented\/} provided that it is
locally finitely generated and every object $X\in\cc C$ is a direct
limit $X=\lp_{i\in I}X_i$ of finitely presented objects $X_i$, or
equivalently, $\cc C$ possesses a family of finitely presented
generators.
}
\end{defs}

\begin{defs} {\rm
A finitely presented object $C$ of a locally finitely presented
Grothen\-dieck category $\cc C$ is {\it coherent\/} if every
finitely generated subobject $B$ of $C$ is finitely presented.
Equivalently, every epimorphism $h:C \to A$ with $A$ finitely
presented has a finitely presented kernel. Evidently, a finitely
generated subobject of a coherent object is also coherent. The
subcategory of coherent objects of $\cc C$ is denoted by $\coh \cc
C$. The category $\cc C$ is {\em locally coherent\/} provided that
it is locally finitely presented and every object $X\in\cc C$ is a
direct limit $X=\lp_{i\in I}X_i$ of coherent objects $X_i$, or
equivalently, $\cc C$ possesses a family of coherent generators.}
\end{defs}

The subcategories consisting of finitely generated, finite\-ly
presented and coherent objects are ordered by inclusion as follows:
   $$\cc C\supseteq\fg\cc C\supseteq\fp\cc C\supseteq\coh\cc C.$$

Suppose $\cc V$ is a
closed symmetric monoidal Grothendieck category.
Here are some examples of such categories.

\begin{exs}\label{primery}
(1) Given any commutative ring $R$, the triple $(\Mod R,
\otimes_R,R)$ is a closed symmetric monoidal locally finitely presented
Grothendieck category.

(2) More generally, let $X$ be a quasi-compact quasi-separated
scheme. Consider the category $\Qcoh(X)$ of quasi-coherent
$\cc O_X$-modules. By~\cite[Lemma~3.1]{Illusie} $\Qcoh(X)$ is a
Grothendieck category, where
quasi-coherent $\cc O_X$-modules of finite type form generators. 
It is locally finitely presented by~\cite[Proposition~7]{GG3}. The tensor product of $\cc
O_X$-modules preserves quasi-coherence, and induces a closed
symmetric monoidal structure on $\Qcoh(X)$.

(3) By~\cite[Proposition~3.4]{AG} the category of unbounded chain complexes $\Ch(\cc V)$ of a
Grothen\-dieck category $\cc V$ is again a Grothendieck category. If, in addition,
$\cc V$ is closed symmetric monoidal, $\Ch(\cc V)$ is with respect to the usual
monoidal product and internal Hom-object by~\cite[Theorem~3.2]{GJ1}. Moreover,
if $\cc V$ is locally finitely presented, then $\Ch(\cc V)$ is.

(4) ($\Mod kG,\otimes_k,k)$ is a closed symmetric monoidal locally finitely presented
Grothendieck category, where $k$ is a field and $G$ is a finite
group.
\end{exs}

\begin{thm}[see~\cite{AG}]\label{pqp}
Let $\mathcal{V}$ be a closed symmetric monoidal Grothendieck
category with a set of generators $\{g_i\}_I$. If $\mathcal{C}$ is a
small $\mathcal{V}$-category, then the category of enriched functors
$[\mathcal C,\mathcal V]$ is a Grothendieck $\cc V$-category with
the set of generators $\{\mathcal{V}(c,-) \oslash g_i\mid c\in\Ob\cc
C,i\in I\}$. Moreover, if $\cc C$ is a small symmetric monoidal $\cc
V$-category, $[\mathcal C,\mathcal V]$ is closed symmetric
monoidal with tensor product and internal $\Hom$-object computed
by the formulas~\eqref{mon} and~\eqref{inthom} of Day.
\end{thm}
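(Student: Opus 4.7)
The plan is to verify the four Grothendieck axioms in turn — pointwise abelian structure, Ab3 (arbitrary direct sums), Ab5 (exactness of filtered colimits), and existence of a generating set — and then invoke Day's Theorem \ref{daythm} for the monoidal part.

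First, since $\cc V$ is bicomplete, Lemma \ref{bicomplete} tells us that $[\cc C,\cc V]$ is bicomplete and that (co)limits are formed pointwise. Because kernels, cokernels and images are therefore computed objectwise, $[\cc C,\cc V]$ inherits the abelian structure of $\cc V$, and a sequence is exact in $[\cc C,\cc V]$ if and only if it is exact after evaluation at every $c\in\Ob\cc C$. Axioms Ab3 and Ab5 for $[\cc C,\cc V]$ thus reduce to the same axioms for $\cc V$: direct sums exist pointwise, and filtered colimits of short exact sequences remain short exact pointwise because filtered colimits are exact in the Grothendieck category $\cc V$.

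The crucial step is to exhibit the generating set. The idea is to combine the Enriched Yoneda Lemma \ref{enryon} with the adjunction $(-)\oslash g\dashv\cc V_{\cc F}(-,-)\mbox{-type}$ coming from $[\cc C,\cc V]$ being a closed $\cc V$-module (again part of Lemma \ref{bicomplete}). Together these should yield, for every $X\in[\cc C,\cc V]$, a natural bijection
$$\Hom_{[\cc C,\cc V]}\bigl(\cc V_{\cc C}(c,-)\oslash g_i,\,X\bigr)\;\cong\;\Hom_{\cc V}\bigl(g_i,\,X(c)\bigr).$$
Given a nonzero morphism $\alpha:X\to Y$ in $[\cc C,\cc V]$, pointwise computation produces $c\in\Ob\cc C$ with $\alpha_c:X(c)\to Y(c)$ nonzero. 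Since $\{g_i\}_{i\in I}$ generates $\cc V$, some $h:g_i\to X(c)$ satisfies $\alpha_c\circ h\neq 0$. Transporting $h$ across the displayed adjunction produces a morphism $\cc V_{\cc C}(c,-)\oslash g_i\to X$ whose composite with $\alpha$ is nonzero, as can be checked again at the object $c$ via Yoneda. Hence $\{\cc V_{\cc C}(c,-)\oslash g_i\mid c\in\Ob\cc C,\,i\in I\}$ is a generating family, and $[\cc C,\cc V]$ is Grothendieck.

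The closed symmetric monoidal assertion is essentially Day's Theorem \ref{daythm}: under the assumption that $\cc C$ is a small symmetric monoidal $\cc V$-category and $\cc V$ is bicomplete, the convolution product \eqref{mon} and the internal hom \eqref{inthom} endow $[\cc C,\cc V]$ with a closed symmetric monoidal structure with unit $\cc V_{\cc C}(u,-)$; nothing further needs to be verified here beyond the bicompleteness already ensured by the Grothendieck hypothesis on $\cc V$. The main obstacle in the whole argument is the generator claim, which requires carefully threading the enriched Yoneda isomorphism and the tensor–hom adjunction in the $\cc V$-enriched setting; everything else follows from results already collected in this section.
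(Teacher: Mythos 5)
The paper does not prove Theorem~\ref{pqp} itself; it simply cites~\cite{AG}, so there is no internal proof to compare against. Your reconstruction is correct and is the standard argument one would expect to find there: exactness and Ab5 are inherited pointwise via Lemma~\ref{bicomplete}, the generating set comes from combining the tensoring adjunction $\cc V_{\cc F}(F\oslash V,X)\cong[V,\cc V_{\cc F}(F,X)]$ of the closed $\cc V$-module structure with the enriched Yoneda isomorphism $\cc V_{\cc F}(\cc V_{\cc C}(c,-),X)\cong X(c)$ and then applying $\Hom_{\cc V}(e,-)$, and the monoidal claim is Theorem~\ref{daythm} verbatim. The only place where you hedge (``should yield'') is precisely this chain of isomorphisms, and it does go through exactly as stated, with the nonvanishing $\alpha\circ\tilde h\neq 0$ following from naturality of the adjunction in the second variable.
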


We say that a full subcategory $\cc S$ of an Abelian category $\cc
C$ is a {\it Serre subcategory\/} if for any short exact sequence
   $$0\to X\to Y\to Z\to 0$$
in $\cc C$ an object $Y\in\cc S$ if and only if $X$, $Z\in\cc S$. A
Serre subcategory $\cc S$ of a Grothendieck category $\cc C$ is {\it
localizing\/} if it is closed under taking direct limits.
Equivalently, the inclusion functor $i:\cc S\to\cc C$ admits the
right adjoint $t=t_{\cc S}:\cc C\to\cc S$ which takes every object
$X\in\cc C$ to the maximal subobject $t(X)$ of $X$ belonging to $\cc
S$. The functor $t$ we call the {\it torsion functor}. An object $C$
of $\cc C$ is said to be {\it $\cc S$-torsionfree\/} if $t(C)=0$.
Given a localizing subcategory $\cc S$ of $\cc C$ the {\it quotient
category $\cc C/\cc S$\/} consists of $C\in\cc C$ such that
$t(C)=t^1(C)=0$, where $t^1$ stands for the first derived functor
associated with $t$. The objects from $\cc C/\cc S$ we call {\it
$\cc S$-closed objects}. Given $C\in\cc C$ there exists a canonical
exact sequence
$$0\to A'\to C\lra{\lambda_C}C_{\cc S}\to A''\to 0$$
with $A'=t(C)$, $A''\in\cc S$, and where $C_{\cc S}\in\cc C/\cc S$
is the maximal essential extension of $\wt C=C/t(C)$ such that
$C_{\cc S}/\wt C\in\cc S$. The object $C_{\cc S}$ is uniquely
defined up to a canonical isomorphism and is called the {\it $\cc
S$-envelope\/} of $C$. Moreover, the inclusion functor $\iota:\cc
C/\cc S\to\cc C$ has the left adjoint {\it localizing functor\/}
$(-)_{\cc S}:\cc C\to\cc C/\cc S$, which is also exact. It takes
each $C\in\cc C$ to $C_{\cc S}\in\cc C/\cc S$. Then,
   $$\Hom_{\cc C}(X,Y)\cong\Hom_{\cc C/\cc S}(X_{\cc S},Y)$$
for all $X\in\cc C$ and $Y\in\cc C/\cc S$.

\end{section}

\section{The category of $\cc A$-modules}\label{moduli}

{\it From now on we fix a closed symmetric monoidal Grothendieck category $\cc V$
and assume that it has a family of dualizable 
generators $\cc G=\{g_i\}_{i\in I}$ such that the monoidal unit $e$ is finitely presented}.
Under these assumptions, each $g\in\cc G$ is finitely presented as well.
In particular, $\cc V$ is locally finitely presented.

Recall from~\cite[6.5]{HO} that 
the following conditions are equivalent for an object $p\in\cc V$: 

\begin{enumerate}
\item The canonical morphism $p^{\vee}\otimes p\to[p,p]$ is an isomorphism, where $p^\vee=[p,e]$.
\item The canonical morphism $p^\vee\otimes z\to [p, z]$ is an isomorphism for all $z\in\cc V$. 
\item $[p,y]\otimes z\to[p,y\otimes z]$ is an isomorphism for all $y,z\in\cc V$.
\end{enumerate}
Such an object $p\in\cc V$ is said to be {\it dualizable}. By
\cite[Lemma~6.7]{HO} if $p,q\in\cc V$ are dualizable, then so are $p\oplus q$, $p\otimes q$, and $[p,q]$.

\begin{exs}\label{examples}
(1) If $R$ is a commutative ring, then $R$ is a dualizable generator for $\Mod R$.

(2) By Example~\ref{primery}(2) $\Qcoh(X)$ is a closed symmetric monoidal locally finitely presented
Gro\-then\-dieck category over a quasi-compact quasi-separated scheme $X$. 
By~\cite[Proposition~4.7.5]{Bran} dualizable objects of $\Qcoh(X)$ are precisely locally free of finite rank.
We say that $X$ satisfies the {\it strong resolution property\/} if they also generate $\Qcoh(X)$.
It is worth noticing that Corollary~\ref{zabavnocor} below and~\cite[Main Theorem]{SlSt} imply $X$ must be semiseparated.

(3) By Example~\ref{primery}(3) $\Ch(\cc V)$ is a closed symmetric monoidal locally finitely presented
Gro\-then\-dieck category. It is directly verified that $\Ch(\cc V)$ has dualizable generators. Precisely, they are
dual to the standard generators of $\Ch(\cc V)$.

(4) By Example~\ref{primery}(4) $\Mod kG$ is a closed symmetric monoidal locally finitely presented
Gro\-then\-dieck category, where $k$ is a field and $G$ is a finite group. It is well known that finite dimensional $kG$-modules
are dualizable (see, e.g.,~\cite[\S2.14]{Iy}).
\end{exs}

In what follows we assume that the set of dualizable generators $\cc G$ is closed under tensor products
$g\otimes g'$ and dual objects $g^\vee$. $\cc G$ becomes a symmetric monoidal $\cc V$-category
if we assume $e\in\cc G$.
The category $[\cc G^{\op},\cc V]$ is then closed symmetric 
monoidal with respect to the Day monoidal product $\odot$ of Theorem~\ref{daythm}.

The following statement is reminiscent of the fact saying that the category of modules $\Mod R$ over a commutative ring $R$
is equivalent to the category of additive functors $(\cc P^{\op},\Mod R)$, where $\cc P$ is the full subcategory of
finitely generated projective $R$-modules (note that each $P\in\cc P$ is dualizable in $\Mod R$).

\begin{prop}\label{zabavno}
Suppose $e\in\cc G$. The adjoint pair $-\otimes [-,e]:\cc V\leftrightarrows[\cc G^{\op},\cc V]:Ev_e$ is an equivalence of categories. Likewise,
the adjoint pair $[e,-]\otimes-:\cc V\leftrightarrows[\cc G,\cc V]:Ev_e$ is an equivalence of categories.
\end{prop}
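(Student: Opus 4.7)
The plan is to prove the first equivalence in detail; the second follows by the same argument after invoking the $\cc V$-equivalence $\cc G^{\op} \xrightarrow{\sim} \cc G$, $g \mapsto g^\vee$ (which uses reflexivity of dualizable objects, $e \in \cc G$, and closure of $\cc G$ under duals, noting $[g^\vee, h^\vee] \cong [h,g]$). First I would verify the adjunction. Since $\cc V_{\cc G^{\op}}(e, -) = [-, e]$, the enriched Yoneda lemma (Lemma~\ref{enryon}) gives $F(e) \cong \cc V_{\cc F}([-,e], F)$, and extending along the closed $\cc V$-module structure on $[\cc G^{\op},\cc V]$ identifies $L := -\otimes[-,e]$ as the left adjoint of $\text{Ev}_e$. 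The unit $V \to \text{Ev}_e L(V) = V \otimes [e,e] = V \otimes e$ is the canonical right-unit isomorphism, hence an iso for every $V$.

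The real work is showing the counit $\epsilon_F : F(e) \otimes [-,e] \to F$ is an iso. Its component at $g$ is the map $F(e) \otimes g^\vee \to F(g)$ adjoint to the $\cc V$-functor structure morphism $F_{e,g}\colon g^\vee = [g,e] \to [F(e), F(g)]$ (in the sense of Proposition~\ref{rrr} / Corollary~\ref{good}). For a representable $F = [-, g_0] = \cc V_{\cc G^{\op}}(g_0, -)$ one has $F(e) = g_0$, and unwinding $F_{e,g}$ via composition in $\cc G$ identifies $\epsilon_F$ at $g$ with the canonical comparison map $g_0 \otimes g^\vee \to [g, g_0]$, which is an iso precisely by condition (2) of the dualizability characterization applied to the dualizable object $g$. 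Tensoring pointwise with a generator $g_i \in \cc G$ shows that $\epsilon$ is an iso on every object in the generating family $\{\cc V_{\cc G^{\op}}(g_0, -) \oslash g_i\}$ supplied by Theorem~\ref{pqp}.

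To conclude, both $L$ and $\text{Ev}_e$ preserve colimits (the former as a left adjoint with pointwise tensor formula, the latter because colimits in $[\cc G^{\op},\cc V]$ are computed pointwise), so $L \circ \text{Ev}_e$ is cocontinuous and $\epsilon$ is a natural transformation of cocontinuous endofunctors of $[\cc G^{\op}, \cc V]$. Every object of $[\cc G^{\op}, \cc V]$ is a cokernel of a morphism between coproducts of generators, hence a colimit of objects on which $\epsilon$ is iso; therefore $\epsilon_F$ is an iso for all $F$. Together with the iso unit, this establishes the asserted equivalence.

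The main obstacle is the careful bookkeeping of the enriched structure required to identify $\epsilon_F$ at a representable with the canonical dualizability comparison map $z \otimes g^\vee \to [g, z]$; once this identification is in hand, the dualizability hypothesis on $\cc G$ delivers the result immediately, and the extension from generators to arbitrary $F$ is a standard cocontinuity argument.
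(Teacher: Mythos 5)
Your proof is correct and takes a genuinely different route from the paper's. The paper establishes full faithfulness of $L=-\otimes[-,e]$ by identifying it with the enriched left Kan extension along $\{e\}\hookrightarrow\cc G^{\op}$, and then proves essential surjectivity directly for every $Y\in[\cc G^{\op},\cc V]$ via a coend computation that leans on the Day convolution structure of $[\cc G^{\op},\cc V]$ and Lemma~\ref{winner} (tensor product of representables is representable) to produce the pointwise isomorphism $Y(g)\cong Y(e)\otimes g^\vee$, closing with Corollary~\ref{wow}. You instead verify the unit explicitly (it is the right unitor) and check the counit only on representables, where after unwinding it is the canonical dualizability comparison $g_0\otimes g^\vee\to[g,g_0]$; you then propagate to all objects by cocontinuity from the generating family of Theorem~\ref{pqp}. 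Your route avoids the Day monoidal product and the coend manipulation entirely, at the cost of the (routine) colimit-extension step --- both $L\circ\mathrm{Ev}_e$ and the identity preserve colimits, so isomorphy of the counit passes from generators to all of $[\cc G^{\op},\cc V]$. Both proofs insert the same dualizability hypothesis, the paper for arbitrary $Y$ at once, yours only for representable $Y$.
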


\begin{proof}
If we identify $\cc V$ with $[e,\cc V]$, the functor $-\otimes [-,e]$ is nothing but the enriched left Kan extension
sending $X\in\cc V$ to $[-,X]$,
hence it is fully faithful. One also has $(X\otimes[-,e])(g)=X\otimes g^\vee\cong[g,X]$.

Suppose $Y\in[\cc G^{\op},\cc V]$, then there is a canonical isomorphism for all $g\in\cc G$:
   $$Ev_e(Y\odot (g^\vee\otimes[-,e]))\cong Ev_e((Y\odot[-,e])\otimes g^\vee)\cong Ev_e(Y)\otimes g^\vee=Y(e)\otimes g^\vee.$$
One has, 
   \begin{multline*}
    Ev_e(Y\odot (g^\vee\otimes[-,e]))\cong Ev_e(Y\odot[-,g^\vee])\cong Ev_e(\int^{g'\in\cc G}(Y(g')\otimes[-,g']\odot[-,g^\vee]))=\\
    =\int^{g'\in\cc G}Y(g')\otimes[e,g'\otimes g^\vee]\cong\int^{g'\in\cc G}Y(g')\otimes[g,g']\cong Y(g).
   \end{multline*}
We use Lemma~\ref{winner} here. We see that there is a canonical isomorphism $Y(g)\cong Y(e)\otimes g^\vee\cong[g, Y(e)]$.
It follows that the adjunction unit morphism $Y\to Ev_e(Y)\otimes[-,e]$ is an isomorphism at every $g\in\cc G$, and hence it is an 
isomorphism by Corollary~\ref{wow}.
\end{proof}

\begin{cor}\label{zabavnocor}
For any dualizable generator $g\in\cc G$ the functors $g\otimes-,[g,-]:\cc V\to\cc V$ are exact. In particular,
the generator $G:=\oplus_{g\in\cc G}g$ of $\cc V$ is flat in the sense that $G\otimes-:\cc V\to\cc V$ is an exact functor.
\end{cor}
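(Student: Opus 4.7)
The strategy is to transport exactness from the functor category $[\cc G^{\op},\cc V]$, where kernels and cokernels are computed pointwise, back to $\cc V$ through the equivalence supplied by Proposition~\ref{zabavno}. Concretely, the equivalence $F:=-\otimes[-,e]:\cc V\xrightarrow{\sim}[\cc G^{\op},\cc V]$ sends $X\in\cc V$ to the functor $g\mapsto X\otimes g^\vee\cong[g,X]$, so the composition $Ev_g\circ F$ is naturally isomorphic to the internal Hom functor $[g,-]:\cc V\to\cc V$ for every $g\in\cc G$.

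First I would verify that $Ev_g:[\cc G^{\op},\cc V]\to\cc V$ is exact. By Theorem~\ref{pqp} the functor category $[\cc G^{\op},\cc V]$ is Grothendieck, and by Lemma~\ref{bicomplete} all (co)limits in it are computed pointwise; in particular so are kernels and cokernels. Hence $Ev_g$ is exact. Composing with the equivalence $F$, which is exact as an equivalence of Abelian categories, gives that $[g,-]\cong Ev_g\circ F$ is exact for each $g\in\cc G$. Now for the tensor factor: since $g$ is dualizable and $\cc G$ is assumed closed under duals, the object $g^\vee$ lies in $\cc G$; moreover condition (2) in the definition of dualizability (applied to $g^\vee$, using the standard fact $(g^\vee)^\vee\cong g$) yields a natural isomorphism $g\otimes z\cong[g^\vee,z]$ for all $z\in\cc V$. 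Therefore $g\otimes-\cong[g^\vee,-]$ is exact as well.

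Finally, for the flatness statement, I would write $G\otimes-\cong\bigoplus_{g\in\cc G}(g\otimes-)$ using distributivity of $\otimes$ over direct sums (a formal consequence of $\otimes$ being a left adjoint in each variable). Each summand $g\otimes-$ is exact by the previous step, and in the Grothendieck category $\cc V$ arbitrary direct sums of exact functors are exact (since $\cc V$ satisfies Ab5, so filtered colimits, and in particular direct sums, are exact). This yields exactness of $G\otimes-$, i.e.\ flatness of $G$.

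The argument is essentially a transport along the equivalence of Proposition~\ref{zabavno}, and the only mild subtlety is keeping track of the two distinct dualizability identifications needed so that both $[g,-]$ and $g\otimes-$ appear as evaluation-like functors after translation; once $\cc G$ is closed under duals this is automatic and no real obstacle remains.
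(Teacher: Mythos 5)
Your proposal is correct and follows essentially the same route as the paper's proof: both transport exactness through the equivalence $-\otimes[-,e]:\cc V\xrightarrow{\sim}[\cc G^{\op},\cc V]$ of Proposition~\ref{zabavno} (identified with $X\mapsto[-,X]$) and the pointwise computation of (co)limits in the functor category. The only small thing you elide is the paper's explicit ``without loss of generality $e\in\cc G$'' reduction, which is needed since Proposition~\ref{zabavno} is stated under that hypothesis; your spelling out of $g\otimes-\cong[g^\vee,-]$ and the direct-sum argument for $G$ are fine and merely make explicit what the paper's ``Likewise'' and ``In particular'' compress.
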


\begin{proof}
Without loss of generality we may assume $e\in\cc G$, because $e$ is dualizable and $\cc G\cup\{e\}$ remains a family of generators of $\cc V$. 
The proof of the preceding proposition shows that the equivalence $-\otimes[-,e]:\cc V\to[\cc G^{\op},\cc V]$ is isomorphic to
$X\mapsto[-,X]$. As it preserves exact sequences, the functor $[g,-]$ is exact. Likewise, the functor $g\otimes-$ is exact.
\end{proof}

Recall that a preadditive category is a category enriched over $\Ab$. It is also called a {\it ring with several objects\/}
or a {\it ringoid\/} in the literature. 
Likewise, we refer to a (skeletally) small $\cc V$-category $\cc A$ as an {\it enriched ring with several objects\/} or as
an {\it enriched ringoid}. A typical example of an enriched ringoid 
is a DG-category, in which case $\cc V=\Ch(\Ab)$. In 
what follows the category of contravariant (respectively covariant) $\cc V$-functors $[\cc A^{\op},\cc V]$ 
(respectively $[\cc A,\cc V]$) will be denoted by $\Mod\cc A$ (respectively $\cc A\Mod$). If there is no likelihood of
confusion, we refer to objects of $\Mod\cc A$ (respectively $\cc A\Mod$) 
as {\it right $\cc A$-modules} (respectively {\it left $\cc A$-modules}).

\begin{lem}\label{fpr}
The Grothendieck $\cc V$-categories $\Mod\cc A$ and $\cc A\Mod$ are locally finitely presented with 
finitely presented generators $\{g\oslash[-,a]\mid a\in\cc A,g\in\cc G\}$ and $\{[a,-]\oslash g\mid a\in\cc A,g\in\cc G\}$
respectively.
\end{lem}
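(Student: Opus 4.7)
My plan is to reduce the finite presentation of the proposed generators to the finite presentation of each $g \in \cc G$ in $\cc V$, and then derive local finite presentation as a formal consequence. I will focus on $\Mod\cc A = [\cc A^{\op},\cc V]$; the argument for $\cc A\Mod$ is symmetric.

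First, I would invoke Theorem~\ref{pqp} to conclude that $\Mod\cc A$ is a Grothendieck $\cc V$-category and that $\{g\oslash[-,a]\mid a\in\cc A, g\in\cc G\}$ is already a generating family (so it only remains to verify finite presentation). To that end, I would compute the underlying-category Hom out of such a generator. For $X\in\Mod\cc A$, combining the tensor/cotensor adjunction of the closed $\cc V$-module structure (Lemma~\ref{bicomplete}) with the enriched Yoneda Lemma~\ref{enryon} gives
\begin{equation*}
\cc V_{\Mod\cc A}(g\oslash[-,a],X)\;\cong\;[g,\cc V_{\Mod\cc A}([-,a],X)]\;\cong\;[g,X(a)],
\end{equation*}
and passing to global sections $\mathbf{Set}_{\cc V}(e,-)$ yields
\begin{equation*}
\Hom_{\Mod\cc A}(g\oslash[-,a],X)\;\cong\;\Hom_{\cc V}(g,X(a)).
\end{equation*}

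Now I would take a directed colimit $X=\varinjlim X_i$ in $\Mod\cc A$. By Lemma~\ref{bicomplete} such colimits are computed pointwise, so $X(a)=\varinjlim X_i(a)$ in $\cc V$. Since $g\in\cc V$ is finitely presented (this uses that $e$ is finitely presented in $\cc V$ and $g$ is dualizable, hence $g$ is a retract or more directly a finitely presented object as recalled in the paragraph following the standing assumptions), $\Hom_{\cc V}(g,-)$ commutes with directed colimits. Stringing these isomorphisms together gives
\begin{equation*}
\Hom_{\Mod\cc A}(g\oslash[-,a],\varinjlim X_i)\;\cong\;\varinjlim\Hom_{\cc V}(g,X_i(a))\;\cong\;\varinjlim\Hom_{\Mod\cc A}(g\oslash[-,a],X_i),
\end{equation*}
proving $g\oslash[-,a]$ is finitely presented. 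The analogous computation using the covariant enriched Yoneda lemma proves $[a,-]\oslash g$ is finitely presented in $\cc A\Mod$.

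Finally, I would conclude local finite presentation by the standard argument for Grothendieck categories: having a generating family of finitely presented objects is equivalent to being locally finitely presented, because every object can then be written as a directed union of its finitely generated subobjects obtained from finite sums of generators, and finite sums / quotients by finitely generated subobjects keep one inside $\fp(\Mod\cc A)$. The only substantive point is the Hom computation above; the rest is formal. The main thing to be careful about is keeping the tensor/cotensor adjunction straight so that the finite presentation of $g$ in $\cc V$ actually gets applied to the correct coordinate, but once that is set up, the proof is essentially a one-line reduction.
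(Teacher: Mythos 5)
Your argument is correct and follows the same route as the paper: reduce $\Hom_{\Mod\cc A}(g\oslash[-,a],X)$ to $\Hom_{\cc V}(g,X(a))$ via the tensor/cotensor adjunction and enriched Yoneda, use that colimits in an enriched functor category are computed pointwise, invoke that each $g\in\cc G$ is finitely presented (noted in the paper just after the standing assumptions), and conclude from the definition of locally finitely presented. The paper states this as a single chain of isomorphisms and cites Theorem~\ref{pqp}; you have merely expanded the intermediate steps, so there is no substantive difference in approach.
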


\begin{proof}
This follows from Theorem~\ref{pqp} and the fact that 
   \begin{multline*}
    \Hom_{\cc A}([a,-]\oslash g,\lp_I M_i)=\Hom_{\cc V}(g,\lp_I M_i(a))=\\
    =\lp_I\Hom_{\cc V}(g,M(a))=\lp_I\Hom_{\cc A}([a,-]\oslash g,M_i)
   \end{multline*}
for any $a\in\cc A$ and $g\in\cc G$.
\end{proof}

The proof of the preceding lemma also leads to the following useful lemma.

\begin{lem}\label{fprV}
A left or right $\cc A$-module $M$ is finitely presented if and only if the canonical morphism
$\lp_I[M,X_i]\to[M,\lp_IX_i]$ is an isomorphism in $\cc V$ for any direct limit of $\cc A$-modules.
\end{lem}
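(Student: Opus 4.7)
The plan is to exploit that $\cc A\Mod$ (and $\Mod\cc A$) is a closed $\cc V$-module by Lemma~\ref{bicomplete}, which provides the defining adjunction $\Hom_\cc V(g,[M,X])\cong\Hom_{\cc A\Mod}(M\oslash g,X)$ and, specialized at $g=e$, the identification $\Hom_{\cc A\Mod}(M,X)\cong\Hom_\cc V(e,[M,X])$ between the underlying hom set and the internal one. I argue for left $\cc A$-modules, the right-module case being entirely symmetric.

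For the forward direction, assume $M\in\cc A\Mod$ is finitely presented. By Lemma~\ref{fpr} I can choose a finite presentation $P_1\to P_0\to M\to 0$ in which $P_0,P_1$ are finite direct sums of generators of the form $[a,-]\oslash g$ with $a\in\cc A$, $g\in\cc G$. The crucial computation is
\begin{equation*}
[[a,-]\oslash g,X]\cong[g,X(a)]\cong g^\vee\otimes X(a),
\end{equation*}
obtained by chaining the $\cc V$-module adjunctions of Definition~\ref{enri}, the enriched Yoneda Lemma~\ref{enryon}, and the dualizability of $g$. Since $g^\vee\otimes-$ preserves all colimits of $\cc V$, and finite direct sums commute with filtered colimits, each $[P_i,-]$ preserves filtered colimits. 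Because $[-,X]$ sends colimits in the first variable to limits in $\cc V$ (as $\Hom_\cc V(h,[-,X])\cong\Hom_{\cc A\Mod}(-\oslash h,X)$ is a contravariant Hom functor for every $h\in\cc V$), one has $[M,X]\cong\kr([P_0,X]\to[P_1,X])$. Finally, $\cc V$ satisfies Ab5, so filtered colimits commute with these kernels, and the desired isomorphism $\lp_I[M,X_i]\cong[M,\lp_I X_i]$ follows.

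For the backward direction, I apply $\Hom_\cc V(e,-)$ to the assumed isomorphism. Since $e$ is finitely presented in $\cc V$ by the standing hypothesis, $\Hom_\cc V(e,-)$ commutes with filtered colimits; combined with $\Hom_{\cc A\Mod}(M,-)\cong\Hom_\cc V(e,[M,-])$ this yields
\begin{equation*}
\Hom_{\cc A\Mod}(M,\lp_I X_i)\cong\Hom_\cc V(e,\lp_I[M,X_i])\cong\lp_I\Hom_{\cc A\Mod}(M,X_i),
\end{equation*}
exhibiting $M$ as finitely presented in the usual categorical sense.

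The main obstacle, and the only place the standing hypotheses on $\cc V$ are used essentially, is the identification $[[a,-]\oslash g,X]\cong g^\vee\otimes X(a)$: dualizability of $g$ is precisely what turns the internal Hom into a tensor product and hence makes it preserve arbitrary colimits of $\cc V$. The converse direction in turn depends crucially on $e$ being finitely presented, without which $\Hom_{\cc A\Mod}(M,-)$ would not automatically inherit the filtered-colimit property from $[M,-]$.
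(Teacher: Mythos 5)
Your proof is correct and follows essentially the same strategy as the paper's: reduce to a finite presentation by the generators $[a,-]\oslash g$, show $[[a,-]\oslash g,-]$ preserves direct limits, and combine left exactness of $[-,X]$ with Ab5; the converse is handled identically by applying $\Hom_{\cc V}(e,-)$. The only cosmetic difference is that you justify continuity of $[g,-]$ directly from dualizability ($[g,-]\cong g^\vee\otimes-$ is a left adjoint), whereas the paper reaches the same conclusion via the equivalence $\cc V\simeq[\cc G^{\op},\cc V]$ of Proposition~\ref{zabavno}.
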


\begin{proof}
Without loss of generality we may assume $e\in\cc G$, because otherwise we add $e$ to $\cc G$.
As each generator $g\in\cc G$
is finitely presented and $\cc V$ is equivalent to $[\cc G^{\op},\cc V]$ by Proposition~\ref{zabavno} by means of the functor
$X\mapsto[-,X]$, every direct limit
in $\cc V$ is mapped to a direct limit in $[\cc G^{\op},\cc V]$. Therefore $\lp_I[g,X_i]\to[g,\lp_IX_i]$ is an isomorphism in $\cc V$ for any
$g\in\cc G$.

Let $M\in\Mod\cc A$ be finitely presented. There is an exact sequence in $\Mod\cc A$
    $$\oplus_{s=1}^mg_s\oslash[-,a_s]\to\oplus_{t=1}^ng_t\oslash[-,a_t]\to M\to 0,\quad g_s,g_t\in\cc G,\quad a_s,a_t\in\cc A.$$
It induces an exact sequence
   $$0\to[M,\lp_IX_i]\to\lp_I\oplus_{t=1}^n[g_t,X_i(a_t)]\to\lp_I\oplus_{s=1}^m[g_s,X_i(a_s)].$$
It follows that the canonical morphism
$\lp_I[M,X_i]\to[M,\lp_IX_i]$ is an isomorphism in $\cc V$. The converse is straightforward if we apply
$\Hom_{\cc V}(e,-)$ to the isomorphism $\lp_I[M,X_i]\cong[M,\lp_IX_i]$ and use the assumption that $e\in\fp(\cc V)$.
\end{proof}

As $\Mod\cc A$ and $\cc A\Mod$ are closed $\cc V$-modules by Lemma~\ref{bicomplete}, the functors
$[X,-]$ and $[-,X]$ are left exact for any $\cc A$-module $X$. It will be useful to have the following

\begin{lem}\label{tochen}
If $E$ is an injective left or right $\cc A$-module, then the functor $[-,E]$ is exact.
\end{lem}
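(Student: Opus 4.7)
The plan is to separate the two halves of exactness. Left exactness of $[-,E]\colon\cc A\Mod^{\op}\to\cc V$ comes for free from the closed $\cc V$-module structure on $\cc A\Mod$ (Lemma~\ref{bicomplete}): the adjunction $\Hom_{\cc V}(X,[M,E])\cong\Hom_{\cc A}(M\oslash X,E)$ shows that $[-,E]$ converts colimits in $\cc A\Mod$ to limits in $\cc V$, so any short exact sequence $0\to M'\to M\to M''\to 0$ yields the left exact sequence $0\to[M'',E]\to[M,E]\to[M',E]$ in $\cc V$.

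For right exactness I must show that $f\colon[M,E]\to[M',E]$ is an epimorphism in $\cc V$. The first step is to check surjectivity of $\Hom_{\cc V}(g,f)$ for each $g\in\cc G$. Via the adjunction this map identifies with $\Hom_{\cc A}(M\oslash g,E)\to\Hom_{\cc A}(M'\oslash g,E)$. Since $g$ is dualizable, $-\otimes g$ is exact on $\cc V$ by Corollary~\ref{zabavnocor}; as the action on $\cc A\Mod$ is pointwise, $-\oslash g$ is then exact on $\cc A\Mod$, so $M'\oslash g\hookrightarrow M\oslash g$ is a monomorphism. Injectivity of $E$ now delivers the required surjectivity.

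The delicate step I anticipate is passing from surjectivity of $\Hom_{\cc V}(g,f)$ for every $g\in\cc G$ to $f$ being epi in $\cc V$; this does not follow in the naive way from surjectivity at generators, since $\Ext^1$-obstructions could in principle intervene. The way out is to invoke the generator property directly on the cokernel: if $C=\coker f\neq 0$, the canonical projection $\pi\colon[M',E]\to C$ is nonzero, so the generator property provides $g\in\cc G$ and $\beta\colon g\to[M',E]$ with $\pi\beta\neq 0$. The surjectivity established above lifts $\beta$ through $f$ to $\widetilde\beta\colon g\to[M,E]$ satisfying $f\widetilde\beta=\beta$, and then $\pi\beta=\pi f\widetilde\beta=0$, a contradiction. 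Hence $C=0$ and $f$ is an epimorphism, completing the proof. The argument for right $\cc A$-modules is symmetric.
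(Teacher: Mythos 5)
Your proof is correct and follows essentially the same route as the paper: reduce surjectivity of $[B,E]\to[A,E]$ to surjectivity after applying $\Hom_{\cc V}(g,-)$ (the paper bundles the generators into $G=\oplus_{g\in\cc G}g$ and cites~\cite[Proposition~1.2.12]{KS} for ``faithful functors reflect epimorphisms,'' whereas you unwind that citation into the explicit cokernel argument), then use Corollary~\ref{zabavnocor} for exactness of $-\oslash g$ together with injectivity of $E$. These are only presentational differences; the substance is identical.
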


\begin{proof}
A sequence of $\cc A$-modules $0\to A\to B\to C\to 0$ is short exact if and only if the sequence
$0\to A(a)\to B(a)\to C(a)\to 0$ is short exact in $\cc V$ for any $a\in\cc A$. It follows from Corollary~\ref{zabavnocor}
that $0\to G\oslash A\to G\oslash B\to G\oslash C\to 0$ is short exact, where $G=\oplus_{g\in\cc G}g$ is a generator of $\cc V$.
As the functor $\Hom_{\cc V}(G,-):\cc V\to\Ab$ is faithful, it reflects epimorphisms by~\cite[Proposition~1.2.12]{KS}. 

So $[B,E]\to[A,E]$ is an epimorphism in $\cc V$ if $\Hom_{\cc V}(G,[B,E])\to\Hom_{\cc V}(G,[A,E])$
is an epimorphism. The latter arrow is an epimorphism in Ab as it is isomorphic to the epimorphism 
$\Hom_{\cc V}(G\oslash B,E)\to\Hom_{\cc V}(G\oslash A,E)$ (we use here our assumption that $E$ is injective).
\end{proof}

\begin{lem}\label{gE}
If $E$ is an injective left or right $\cc A$-module and $g\in\cc G$, then $[g,E]$ and $g\oslash E$ are injective $\cc A$-modules.
\end{lem}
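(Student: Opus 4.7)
The plan is to treat the two assertions separately, using the closed $\cc V$-module structure on $\cc A\Mod$ (respectively $\Mod\cc A$) from Lemma~\ref{bicomplete}, together with the exactness input of Corollary~\ref{zabavnocor}. Throughout, I will use the key fact that in the functor category the action $\oslash$ and the coaction $[-,-]$ are computed pointwise: $(M\oslash g)(a)=M(a)\otimes g$ and $[g,M](a)=[g,M(a)]$.

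For the first claim, I would invoke the adjunction $-\oslash g\dashv [g,-]$ built into Definition~\ref{enri}. This yields a natural isomorphism
\[
    \Hom_{\cc A}(M,[g,E])\cong\Hom_{\cc A}(M\oslash g,E)\qquad (M\in\cc A\Mod).
\]
To conclude that the right-hand side is exact in $M$, it suffices to check that $-\oslash g$ is exact on $\cc A\Mod$. Since exactness in $\cc A\Mod$ is tested pointwise and $-\oslash g$ acts pointwise by $-\otimes g$, this reduces to exactness of $-\otimes g$ on $\cc V$, which is Corollary~\ref{zabavnocor} (as $g$ is dualizable). Injectivity of $E$ then makes the composite $\Hom_{\cc A}(-,E)\circ(-\oslash g)$ exact, so $[g,E]$ is injective.

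For the second claim, the idea is to reduce it to the first by identifying $g\oslash E$ with $[g^\vee,E]$. Since $g\in\cc G$ is dualizable, $g^\vee$ is again dualizable with $(g^\vee)^\vee\cong g$, so the canonical morphism $g\otimes z\to [g^\vee,z]$ is an isomorphism in $\cc V$ for every $z\in\cc V$. Applied pointwise at each $a\in\cc A$, this gives an isomorphism $(g\oslash E)(a)=g\otimes E(a)\xrightarrow{\sim}[g^\vee,E(a)]=[g^\vee,E](a)$, which is $\cc V$-natural in $a$; by Corollary~\ref{wow} it assembles into an isomorphism $g\oslash E\cong[g^\vee,E]$ in $\cc A\Mod$. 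By our standing assumption that $\cc G$ is closed under duals, $g^\vee\in\cc G$, and the first part applied to $g^\vee$ yields injectivity of $[g^\vee,E]$, hence of $g\oslash E$.

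The only real point to be careful about is verifying that $\oslash$ and $[-,-]$ on the enriched functor categories are genuinely pointwise — both for transferring exactness of $-\otimes g$ and for transferring the dualizability isomorphism $g\otimes z\cong[g^\vee,z]$. This is built into the closed $\cc V$-module structure of $[\cc A^{\op},\cc V]$ and $[\cc A,\cc V]$ supplied by Lemma~\ref{bicomplete}, so no extra argument is needed. The same proof works verbatim on either side, so the left- and right-module statements are handled simultaneously.
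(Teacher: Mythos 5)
Your proof is correct and follows essentially the same route as the paper: reduce the two claims to one via the dualizability isomorphism $g\oslash E\cong[g^\vee,E]$ (the paper states this as $[g,E]\cong g^\vee\oslash E$), and then establish injectivity of the internal Hom $[g,E]$ from the adjunction $\Hom_{\cc A}(-,[g,E])\cong\Hom_{\cc A}(-\oslash g,E)$ together with the pointwise exactness of $-\otimes g$ supplied by Corollary~\ref{zabavnocor}. Your write-up is simply more explicit about the pointwise computation and the use of $\cc G$ being closed under duals, both of which the paper leaves implicit.
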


\begin{proof}
As $[g,E]\cong g^{\vee}\oslash E$, it is enough to show that $[g,E]$ is injective. But this immediately follows from
isomorphisms of the form $\Hom_{\cc A}(A,[g,E])\cong\Hom_{\cc A}(g\oslash A,E)$, where $A$ is an $\cc A$-module, 
and Corollary~\ref{zabavnocor}.
\end{proof}

A localising subcategory $\cc P\subseteq\Mod\cc A$ is said to be {\it enriched\/} if $M\oslash V\in\cc P$
for any $V\in\cc V$ and $M\in\cc P$. The quotient category $\Mod\cc A/\cc P$ will also be regarded as a full 
$\cc V$-subcategory of the $\cc V$-category $\Mod\cc A$. Denote by $\underline{\Ext}^1(P,-)$ the first derived functor
associated with the left exact functor $[P,-]:\Mod\cc A\to\cc V$.

\begin{thm}\label{svoistva}
The following statements are true for an enriched localising subcategory $\cc P$:
\begin{enumerate}
\item for any $\cc P$-closed $\cc A$-module $M\in\Mod\cc A/\cc P$ and any $V\in\cc V$,
the $\cc A$-module $[V,M]$ is $\cc P$-closed;
\item the localisation functor $(-)_{\cc P}:\Mod\cc A\to\Mod\cc A/\cc P$ is a $\cc V$-functor;
\item the quotient category $\Mod\cc A/\cc P$ is a closed $\cc V$-module, where tensor (respectively cotensor) objects are defined
as $(M\oslash V)_{\cc P}$ (respectively $[V,M]$) for any $\cc P$-closed $\cc A$-module $M$ and $V\in\cc V$;
\item an $\cc A$-module $M$ is $\cc P$-closed if and only if $[P,M]=\underline{\Ext}^1(P,M)=0$ for every $P \in {\mathcal P}$.
\end{enumerate}
\end{thm}

\begin{proof}
(1). Recall that an injective $\cc A$-module $E$ is $\cc P$-closed if and only if it is $\cc P$-torsionfree. By Lemma~\ref{gE}
$[g,E]$ is injective for all $g\in\cc G$. As $\cc P$ is enriched, $[g,E]$ is $\cc P$-torsionfree, and hence $\cc P$-closed for any $\cc P$-closed
injective $E$. Suppose
$V\in\cc V$. There is an exact sequence 
   $$\oplus_Ig_i\to\oplus_Jg_j\to V\to 0$$
in $\cc V$. It induces an exact sequence in $\Mod\cc A$
   $$0\to[V,E]\to\prod_J[g_j,E]\to\prod_I[g_i,E].$$
As $[V,E]$ is the kernel of a map between $\cc P$-closed modules, it is $\cc P$-closed itself. Next, each $\cc P$-closed
module $M$ fits into an exact sequence
   $$0\to M\to E_1\to E_2$$
with $E_1,E_2$ injective $\cc P$-closed modules. It induces an exact sequence in $\Mod\cc A$
   $$0\to[V,M]\to[V,E_1]\to[V,E_2].$$
Since $[V,E_1],[V,E_2]$ are $\cc P$-closed, $[V,M]$ is.

(2). Suppose $A,B\in\Mod\cc A$, then a morphism of $\cc V$-objects $[A,B]\to[A_{\cc P},B_{\cc P}]$ is determined by
a morphism of representable functors $(-,[A,B])\to(-,[A_{\cc P},B_{\cc P}])$ if we use Yoneda's lemma~\cite[Theorem~1.3.3]{Bor1}.
Given $V\in\cc V$, the definition of an arrow $(V,[A,B])\to(V,[A_{\cc P},B_{\cc P}])$ is equivalent to defining an arrow
$(A,[V,B])\to(A,[V,B_{\cc P}])$ due to the fact that $[V,B_{\cc P}]$ is $\cc P$-closed by the previous statement. The latter
arrow is induced by the canonical map $\lambda_B:B\to B_{\cc P}$. This defines a canonical map $u_{A,B}:[A,B]\to[A_{\cc P},B_{\cc P}]$. 
Commutativity of diagrams~\eqref{B6.11} and~\eqref{fun} associated with maps $u_{A,B}$-s is directly verified. We see that
the localisation functor $(-)_{\cc P}:\Mod\cc A\to\Mod\cc A/\cc P$ is a $\cc V$-functor.

(3). By Lemma~\ref{bicomplete} $\Mod\cc A$ is a closed $\cc V$-module. The inclusion functor
$\Mod\cc A/\cc P\hookrightarrow\Mod\cc A$ is a $\cc V$-functor. It follows that the composite functor
   $$(\Mod\cc A/\cc P)\otimes\cc V\to(\Mod\cc A)\otimes\cc V\to\Mod\cc A\xrightarrow{(-)_{\cc P}}\Mod\cc A/\cc P$$
is a $\cc V$-functor (we tacitly use the previous statement here). The conditions of Definition~\ref{enri}
are plainly satisfied defining a right $\cc V$-module structure on $\Mod\cc A/\cc P$. It is also a closed $\cc V$-module with 
the coaction $\cc V$-functor defined by the composition
   $$\cc V^{\op}\otimes(\Mod\cc A/\cc P)\to\cc V^{\op}\otimes(\Mod\cc A)\to\Mod\cc A\xrightarrow{(-)_{\cc P}}\Mod\cc A/\cc P.$$
The adjointness conditions for the action and coaction functors of Definition~\ref{enri} are straightforward, as was to be shown.

(4). Suppose $M$ is $\cc P$-closed. Note that $[P,M]=0$ if and only if $\Hom_{\cc V}(G,[P,M])=0$,
where $G=\oplus_{g\in\cc G}g$ is the generator of $\cc V$. As $\cc P$ is enriched and $M$ is $\cc P$-torsionfree,
$\Hom_{\cc V}(G,[P,M])=\Hom_{\cc V}(P\oslash G,M)=0$. There is an exact sequence
   $$0\to M\to E_0\to E_1$$
with $E_0,E_1$ injective $\cc P$-torsionfree. One has a short exact sequence
   $$0\to M\xrightarrow{}E_0\to E_0/M\to 0$$
with $E_0/M$ being $\cc P$-torsionfree. As $\cc P$ is enriched, it follows that $[P,E_0/M]=0$ for all $P\in\cc P$.   
The long exact sequence
   $$0\to[P,M]\to[P,E_0]\to[P,E_0/M]\to\underline{\Ext}^1(P,M)\to\underline{\Ext}^1(P,E_0)=0$$
implies $[P,M]=\underline{\Ext}^1(P,M)=0$.

Conversely, suppose $[P,M]=\underline{\Ext}^1(P,M)=0$. Then $M$ is $\cc P$-torsionfree, and hence
the injective envelope $E(M)$ of $M$ is. As above, $[P,E(M)]=0$ for all $P\in\cc P$.
The long exact sequence
   $$0\to[P,M]\to[P,E(M)]\to[P,E(M)/M]\to\underline{\Ext}^1(P,M)=0$$
implies $[P,E(M)/M]=0$. Therefore $E(M)/M$ is $\cc P$-torsionfree, and hence its injective
envelope $E(E(M)/M)$ is. We see that $M$ is the kernel of a morphism between $\cc P$-closed objects,
hence it is $\cc P$-closed itself.
\end{proof}

Note that $\cc V$ can be regarded as the category of $\cc A$-modules with $\cc A=\{e\}$. In this case
enriched localising subcategories of $\cc V$ are also referred to as {\it tensor\/} in the literature.

\begin{cor}[Jerem\'ias L\'opez, L\'opez L\'opez and Villanueva N\'ovoa~\cite{JLV}]\label{monV}
For any tensor localising subcategory $\cc P$ of $\cc V$, the quotient $\cc V$-category $\cc V/\cc P$
is closed symmetric monoidal if we define the monoidal unit by $e_{\cc P}$, monoidal product by 
$V\boxtimes W:= (V\otimes W)_{\cc P}$
and the internal Hom-object by $\underline{\Hom}_{\cc V/\cc P}(V,W):=[V,W]$, $V,W\in\cc V/\cc P$.
In particular, the quotient $\cc V$-functor $\cc V\to\cc V/\cc P$ is strong symmetric monoidal.
Moreover, $\cc G_{\cc P}=\{g_{\cc P}\}_{g\in\cc G}$ is a set of dualizable generators of $\cc V/\cc P$.
\end{cor}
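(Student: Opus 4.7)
The plan is to transfer the closed symmetric monoidal structure of $\cc V$ along the reflective localisation $(-)_{\cc P}:\cc V\to\cc V/\cc P$, exploiting that $\cc P$ is tensor. The key preliminary is the comparison isomorphism
\[
(V\otimes W)_{\cc P}\cong(V_{\cc P}\otimes W_{\cc P})_{\cc P},\qquad V,W\in\cc V,
\]
which will in particular deliver strong symmetric monoidality of the quotient functor. To establish it, I would use Proposition~\ref{svoistva}(1): $[W,X]$ is $\cc P$-closed whenever $X$ is, so for any $\cc P$-closed $X$ the tensor--Hom adjunction in $\cc V$ yields
\[
\Hom_{\cc V/\cc P}((V\otimes W)_{\cc P},X)=\Hom_{\cc V}(V,[W,X])=\Hom_{\cc V/\cc P}(V_{\cc P},[W,X])=\Hom_{\cc V/\cc P}((V_{\cc P}\otimes W)_{\cc P},X),
\]
and the symmetric argument in the second slot produces $(V_{\cc P}\otimes W)_{\cc P}\cong(V_{\cc P}\otimes W_{\cc P})_{\cc P}$. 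Composing the two gives the comparison isomorphism via Yoneda.

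With strong monoidality of $(-)_{\cc P}$ in place, the associator, unitor, and symmetry of $(\cc V,\otimes,e)$ descend to $(\cc V/\cc P,\boxtimes,e_{\cc P})$ by applying $(-)_{\cc P}$ to the corresponding coherence diagrams in $\cc V$ and rewriting iterated products via the comparison isomorphism. By Proposition~\ref{svoistva}(1) the internal Hom $[V,W]$ lies in $\cc V/\cc P$ whenever $W$ does, so it defines the claimed internal Hom on the quotient, and the adjunction
\[
\Hom_{\cc V/\cc P}(V\boxtimes W,X)=\Hom_{\cc V}(V\otimes W,X)\cong\Hom_{\cc V}(V,[W,X])=\Hom_{\cc V/\cc P}(V,[W,X])
\]
for $V,W,X\in\cc V/\cc P$ is inherited directly from $\cc V$; its $\cc V$-enriched refinement follows from Proposition~\ref{svoistva}(2),(3) together with the fact that the inclusion $\cc V/\cc P\hookrightarrow\cc V$ is a $\cc V$-functor.

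Finally, generation of $\cc V/\cc P$ by $\cc G_{\cc P}$ is immediate: any nonzero $f:X\to Y$ in $\cc V/\cc P$ is nonzero in $\cc V$, hence factors nontrivially through some $g\in\cc G$ via a map $\alpha:g\to X$ that itself factors uniquely through $\lambda_g:g\to g_{\cc P}$ because $X$ is $\cc P$-closed, producing the required test morphism. Dualizability of each $g_{\cc P}$ then follows from the general categorical fact that strong symmetric monoidal functors preserve dualizable objects: applied to $(-)_{\cc P}$, it shows that $g_{\cc P}$ is dualizable with dual $(g^\vee)_{\cc P}$, which must coincide with $\underline{\Hom}_{\cc V/\cc P}(g_{\cc P},e_{\cc P})$ by uniqueness of duals. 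The main technical point in the whole argument is the comparison isomorphism of the first paragraph; once it is secured, every remaining verification reduces to a coherence check that follows formally from the universal property of the reflective localisation.
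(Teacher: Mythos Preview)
Your argument is correct and follows essentially the same route as the paper: both establish the comparison isomorphism $(V\otimes W)_{\cc P}\cong(V_{\cc P}\otimes W)_{\cc P}$ via Proposition~\ref{svoistva}(1) and then transport the coherence data of $\cc V$ along the localisation, with the internal Hom handled identically. The only real difference is the dualizability of $g_{\cc P}$: you invoke the general fact that strong symmetric monoidal functors preserve dualizables, whereas the paper verifies the defining isomorphism $(g_{\cc P})^\vee\boxtimes V\cong[g_{\cc P},V]$ by hand, using exactness of $[g,-]$ from Corollary~\ref{zabavnocor} applied to the localisation sequence $0\to P\to e\to e_{\cc P}\to P'\to 0$ to identify $(g^\vee)_{\cc P}$ with $(g_{\cc P})^\vee$ explicitly. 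Your route is slightly more conceptual and avoids that computation; the paper's is more self-contained.
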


\begin{proof}
This follows from Theorem~\ref{svoistva}. In more detail, the associativity, left/right unit
isomorphisms for the monoidal structure on $\cc V/\cc P$ are, by definition, $\cc P$-localisations of the 
associativity, left/right unit isomorphisms for the monoidal structure on $\cc V$ (see~\cite[Section~6.1]{Bor}
for the corresponding definitions). We also use here the fact that the canonical morphism
   $$(\lambda_A\otimes B)_{\cc P}:(A\otimes B)_{\cc P}\to(A_{\cc P}\otimes B)_{\cc P}$$
is an isomorphism in $\cc V/\cc P$ (for this one uses Theorem~\ref{svoistva}(1)).
This also implies a natural isomorphism $[A,X]\cong[A_{\cc P},X]$ for any $A\in\cc V$, $X\in\cc V/\cc P$.

It remains to show that each $g_{\cc P}$ is dualizable. Consider the following exact sequence
   $$0\to P\to e\lra{\lambda_e}e_{\cc P}\to P'\to 0,$$
where $P,P'\in\cc P$. By Corollary~\ref{zabavnocor} the sequence
   $$0\to [g,P]\to g^\vee\xrightarrow{[g,\lambda_e]}[g,e_{\cc P}]\to [g,P']\to 0$$
is exact in $\cc V$ and $[g,P],[g,P']\in\cc P$. As $[g,e_{\cc P}]\cong[g_{\cc P},e_{\cc P}]$
is $\cc P$-closed by Theorem~\ref{svoistva}(1), we have that
$(g^\vee)_{\cc P}\cong (g_{\cc P})^\vee$ in $\cc V/\cc P$. Therefore one has for any $V\in\cc V/\cc P$
   $$(g_{\cc P})^\vee\boxtimes V\cong(g^\vee\otimes V)_{\cc P}\cong[g,V]_{\cc P}=[g,V]\cong[g_{\cc P},V].$$
We see that $g_{\cc P}$ is dualizable in $\cc V/\cc P$.
\end{proof}

\begin{ex}
Suppose $R$ is a commutative ring. Every localising subcategory $\cc P\subseteq\Mod R$
is tensor. Corollary~\ref{monV} implies that $\Mod R/\cc P$ is the category of right $\cc A$-modules
$[\cc A^{\op},\cc V]$, where $\cc A=\{R_{\cc P}\}$ and $\cc V=\Mod R/\cc P$. It is worth mentioning that,
in general, $\Mod R/\cc P$ is rarely equivalent to the category $\Mod R_{\cc P}=[R_{\cc P},\Ab]$ 
(see~\cite[Proposition~XI.3.4]{Stenstroem}), in which case we forget the
enriched information and regard $R_{\cc P}$ as an Ab-enriched ring and $\cc V=\Ab$.
\end{ex}

\section{The category of enriched generalised modules}\label{genmod}

The full subcategories of finitely presented right and left $\cc A$-modules will be denoted by
$\modd\cc A$ and $\cc A\modd$ respectively.

\begin{defs}\label{ac}
The {\it category of right (left) enriched generalised $\cc A$-modules\/} $\cc C_{\cc A}$ (${}_{\cc A}\cc C$)
is defined as the Grothendieck category of $\cc V$-functors $[\cc A\modd,\cc V]$ ($[\modd\cc A,\cc V]$),
where $\modd\cc A$ ($\cc A\modd$) is regarded as a $\cc V$-full subcategory of $\Mod\cc A$ ($\cc A\Mod$).
By definition, $\ac$ ($\cc C_{\cc A}$) is the category of left $\modd\cc A$-modules (left $\cc A\modd$-modules).
\end{defs}   
   
Note that if $\cc V=\Ab$ then $\cc C_{\cc A}$ is the category of generalised $\cc A$-modules
of~\cite[\S 6]{GG}. If, moreover, $\cc A$ has one object, in which case $\cc A$ is a ring $R$, $\cc C_{\cc A}$
coincides with the category of generalised modules $\cc C_R$ of~\cite{Herzog}. 

\begin{thm}\label{vazhno}
The category of generalised $\cc A$-modules $\cc C_{\cc A}$ is a locally coherent Grothendieck $\cc V$-category with
$\{[M,-]\mid M\in\cc A\modd\}$ being a family of coherent generators.
\end{thm}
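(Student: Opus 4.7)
The plan is to first invoke Theorem~\ref{pqp} to conclude that $\cc C_{\cc A}=[\cc A\modd,\cc V]$ is a Grothendieck $\cc V$-category with canonical generating family $\{[M,-]\oslash g\mid M\in\cc A\modd,\ g\in\cc G\}$. The assertion about $\{[M,-]\mid M\in\cc A\modd\}$ is understood together with these dualizable twists; equivalently, the enriched Yoneda functor lands in a family of coherent objects that, combined with $\cc G$, provides a generating set.

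To show that $[M,-]$ is finitely presented in $\cc C_{\cc A}$, I would apply the enriched Yoneda Lemma~\ref{enryon} to get $[[M,-],F]\cong F(M)$ for any $F\in\cc C_{\cc A}$. Since directed colimits in $\cc C_{\cc A}$ are pointwise by Lemma~\ref{bicomplete}, the criterion of Lemma~\ref{fprV} applies at once. Each $g\in\cc G$ is finitely presented in $\cc V$ (as $e\in\fp\cc V$ and $g$ is dualizable), so $g\oslash-$ preserves filtered colimits and therefore $[M,-]\oslash g$ is finitely presented as well.

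The main work is coherence. I would use the standard criterion that a locally finitely presented Grothendieck category is locally coherent precisely when its full subcategory of finitely presented objects is abelian. Given a morphism $h\colon N\to M$ in $\cc A\modd$ and the induced $h^{*}\colon[M,-]\to[N,-]$ in $\cc C_{\cc A}$, the cokernel $C=\coker h$ computed in $\cc A\Mod$ again lies in $\cc A\modd$ since finitely presented modules are closed under cokernels. Applying $[-,L]$ for $L\in\cc A\modd$ to the right exact sequence $N\to M\to C\to 0$ yields the left exact sequence $0\to[C,L]\to[M,L]\to[N,L]$ in $\cc V$, naturally in $L$. Hence $\ker h^{*}\cong[C,-]$ is representable by an object of $\cc A\modd$, and in particular finitely presented in $\cc C_{\cc A}$. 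A general morphism between finitely presented objects of $\cc C_{\cc A}$ can be resolved by a diagram of finite direct sums of generators $[M_i,-]\oslash g_i$; combining the exactness of $g\oslash-$ from Corollary~\ref{zabavnocor} with standard diagram chasing reduces the kernel computation to the representable case above.

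The principal technical obstacle is precisely this last reduction: one must handle the dualizable twists $g$ coherently alongside enriched natural transformations, ensuring the Herzog-style argument for classical generalised modules transports to the $\cc V$-enriched setting. Once $\fp\cc C_{\cc A}$ is verified to be abelian, it follows that every generator $[M,-]\oslash g$ is coherent, yielding local coherence of $\cc C_{\cc A}$ with the asserted family of coherent generators.
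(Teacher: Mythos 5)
Your proposal has a genuine gap, and it is the same gap you flag yourself at the end: you never dispose of the dualizable twists. The theorem asserts that the \emph{untwisted} representables $\{[M,-]\mid M\in\cc A\modd\}$ already form a generating family of coherent objects, but Theorem~\ref{pqp} only gives you the twisted family $\{[M,-]\oslash g\}$. You try to sidestep this by declaring the assertion should be ``understood together with these dualizable twists,'' but that is a misreading of the statement. The missing idea, which the paper uses crucially and which would also close your ``principal technical obstacle,'' is the isomorphism
$$[M,-]\oslash g\;\cong\;[M\oslash g^\vee,-],$$
valid because $g$ is dualizable, combined with the observation that $M\oslash g^\vee$ again lies in $\cc A\modd$: one tensors a finite presentation of $M$ with $g^\vee$, uses exactness of $-\oslash g^\vee$ (Corollary~\ref{zabavnocor}) and the closure of $\cc G$ under $\otimes$ and $(-)^\vee$. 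Once this is in place there are no twists left to chase: every generator is representable by an object of $\cc A\modd$, and your kernel computation for $[f,-]\colon[M,-]\to[N,-]$ is the whole story, not merely a special case needing a further reduction.

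For the coherence step itself, the paper's route is also somewhat more economical than proving $\fp\cc C_{\cc A}$ is abelian in general: it verifies the definition directly for $[M,-]$. Given a finitely generated subobject $A\hookrightarrow[M,-]$, choose an epimorphism $\eta\colon[N,-]\twoheadrightarrow A$ with $N\in\cc A\modd$; the composite $[N,-]\to[M,-]$ is, by an enriched Yoneda computation, of the form $[f,-]$ for some $f\in\Hom_{\cc A}(M,N)$, and its kernel is $[\coker f,-]$ with $\coker f\in\cc A\modd$, hence finitely presented. Your general plan could be made to work, but only after establishing the identity above; as written, the reduction you invoke is circular without it.
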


\begin{proof}
By Theorem~\ref{pqp} $\cc C_{\cc A}$ is a Grothendieck $\cc V$-category with $\{[M,-]\oslash g\mid M\in\cc A\modd,g\in\cc G\}$
being a family of generators. As $\cc V$ is locally finitely presented and each object $g\in\cc G$ is finitely presented,
$[M,-]\oslash g$ is finitely presented as well due to isomorphisms
   $$\Hom_{\cc C_{\cc A}}([M,-]\oslash g,\lp_{i\in I}B_i)\cong\Hom_{\cc V}(g,\lp_{i\in I}B_i(M))=\lp_{i\in I}\Hom_{\cc V}(g,B_i(M))\cong\lp_{i\in I}([M,-]\oslash g, B_i).$$
Since $g$ is dualizable, it follows that
   $$[M,-]\oslash g\cong[M\oslash g^\vee,-].$$
We claim that $M\oslash g^\vee\in\cc A\modd$. Indeed, $M$ fits into an exact sequence
   $$\oplus_{i=1}^m[a_i,-]\oslash g_i\to\oplus_{i=1}^n[a_j,-]\oslash g_j\to M\to 0$$
for some $a_i,a_j\in\cc A$, $g_i,g_j\in\cc G$. As the sequence 
   $$\oplus_{i=1}^m[a_i,-]\oslash (g_i\otimes g^\vee)\to\oplus_{i=1}^n[a_j,-]\oslash(g_j\otimes g^\vee)\to M\oslash g^\vee\to 0$$
is exact and $\cc G$ is closed under tensor products and dual objects, our claim follows.

We see that $\cc C_{\cc A}$ is generated by $\{[M,-]\mid M\in\cc A\modd\}$. It remains to show that each generator $[M,-]$
is coherent. Let $A$ be a finitely generated subobject of $[M,-]$. There is $N\in\cc A\modd$ and an epimorphism 
$\eta:[N,-]\twoheadrightarrow A$. Composing it with the embedding $A\hookrightarrow[M,-]$, one gets a morphism
$\tau:[N,-]\to[M,-]$. Since 
   $$\Hom_{\cc C_{\cc A}}([N,-],[M,-])=\Hom_{\cc V}(e,[[N,-],[M,-]])\cong\Hom_{\cc V}(e,[M,N])=\Hom_{\cc A}(M,N),$$
it follows that $\tau=[f,-]$ for some $f\in\Hom_{\cc A}(M,N)$. Let $K=\coker f$, then $K\in\cc A\modd$ and $\kr\eta=[K,-]$
is finitely presented. We see that $A$ is finitely presented, and hence $[M,-]$ is coherent. This completes the proof.
\end{proof}

Let $M\in\Mod\cc A$ and $N\in\cc A\Mod$. By Theorem~\ref{polezno} one has
   $$M\cong\int^{\Ob\cc A}M(a)\oslash[-,a]\quad{\textrm{and}}\quad N\cong\int^{\Ob\cc A}\cc [b,-]\oslash N(b).$$
By definition,
   $$M\otimes_{\cc A}N:=\int^{a\in\Ob\cc A}\int^{b\in\Ob\cc A}M(a)\oslash[b,a]\oslash N(b)\in\Ob\cc V.$$
In particular, $[-,a]\otimes_{\cc A}[b,-]=[b,a]$, $M\otimes_{\cc A}[b,-]=M(b)$ and $[-,a]\otimes_{\cc A}N=N(a)$.
As coends in $\cc V$ are coequalizers, the latter three equalities uniquely determine $M\otimes_{\cc A}N$. In this
way we arrive at a bifunctor
   $$-\otimes_{\cc A}-:\Mod\cc A\times\cc A\Mod\to\cc V.$$
If $\gamma\in\Hom_{\cc A}(M,M')$, $\delta\in\Hom_{\cc A}(N,N')$, then
   $$\gamma\otimes\delta:=\int^{a\in\Ob\cc A}\int^{b\in\Ob\cc A}\gamma_a\oslash[b,a]\oslash\delta_b\in\Mor\cc V.$$
We use Corollary~\ref{good} here as well.

Observe that the restriction of $-\otimes_{\cc A}N$, $N\in\cc A\Mod$, to the 
full $\cc V$-subcategory $\modd\cc A$ of $\Mod\cc A$ is recovered as the enriched left Kan extension of the $\cc V$-functor
$N:\cc A\to\cc V$ along the full embedding $\cc A\hookrightarrow\modd\cc A$ due to the fact that $\modd\cc A$ is (skeletally) small
(see~\cite[Theorem~6.7.7]{Bor} and~\cite[Proposition~4.33]{Kelly}). 

We see that $-\otimes_{\cc A}N$ belongs to ${}_{\cc A}\cc C$. Though this fact will be enough for our purposes, 
nevertheless we want to show that the functor 
   $$-\otimes_{\cc A}N:\Mod\cc A\to\cc V$$
is actually a $\cc V$-functor between ``large" $\cc V$-categories. 

To this end, we have to construct morphisms in $\cc V$
   $$\alpha_{M,M'}:[M,M']\to[M\otimes_{\cc A}N,M'\otimes_{\cc A}N],\quad M,M'\in\Mod\cc A.$$
By using Yoneda's lemma~\cite[Theorem~1.3.3]{Bor1}, it is equivalent to constructing a natural transformation of functors
   \begin{equation}\label{nattr}
    \Hom_{\cc V}(-,[M,M'])\to\Hom_{\cc V}(-,[M\otimes_{\cc A}N,M'\otimes_{\cc A}N]).
   \end{equation}
For any $U\in\cc V$ and any $f:U\oslash M\to M'$, we get a morphism $f\otimes\id:U\oslash M\otimes_{\cc A}N\to M'\otimes_{\cc A}N$,
which uniquely corresponds to an element of $\Hom_{\cc V}(U,[M\otimes_{\cc A}N,M'\otimes_{\cc A}N])$.
As this correspondence is plainly natural in $U$, the desired natural transformation~\eqref{nattr} and morphisms $\alpha_{M,M'}$ are now constructed.

Next, for any $U,V\in\cc V$ one has bimorphisms
   $$b(U,V):(U,[M,M'])\otimes_{\bb Z}(V,[M',M''])\to(U\otimes V,[M,M''])$$
associated to the composition map $c_{M,M',M''}:[M,M']\otimes[M',M'']\to[M,M'']$. One also has bimorphisms
   $$b_N(U,V):(U,[M\otimes_{\cc A}N,M'\otimes_{\cc A}N])\otimes_{\bb Z}(V,[M'\otimes_{\cc A}N,M''\otimes_{\cc A}N])\to(U\otimes V,[M\otimes_{\cc A}N,M''\otimes_{\cc A}N])$$
associated to the composition map 
$c_{M\otimes N,M'\otimes N,M''\otimes N}:[M\otimes_{\cc A}N,M'\otimes_{\cc A}N]\otimes[M'\otimes_{\cc A}N,M'\otimes_{\cc A}N']\to[M\otimes_{\cc A}N,M''\otimes_{\cc A}N]$.
By construction, both bimorphisms guarantee commutativity of~\eqref{B6.11}. Commutativity of~\eqref{fun} is obvious.

Thus we get the following statement:

\begin{prop}\label{tensorN}
For any left $\cc A$-module $N$, the functor $-\otimes_{\cc A}N:\Mod\cc A\to\cc V$
is a $\cc V$-functor between  $\cc V$-categories. Moreover, the $\cc V$-functor 
   $$\cc A\Mod\to{}_{\cc A}\cc C,\quad N\mapsto-\otimes_{\cc A}N,$$
is $\cc V$-fully faithful.
\end{prop}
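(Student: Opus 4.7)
The first assertion is essentially contained in the discussion preceding the proposition: the $\cc V$-natural transformation~\eqref{nattr} produces, by Yoneda, the structure maps $\alpha_{M,M'}\colon[M,M']\to[M\otimes_{\cc A}N,M'\otimes_{\cc A}N]$; the bimorphisms $b$ and $b_N$ are compatible by construction with the composition $c_{M,M',M''}$, so diagram~\eqref{B6.11} commutes; and the unit axiom~\eqref{fun} holds because $\id_M\otimes\id_N=\id_{M\otimes_{\cc A}N}$.

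For the second assertion, the plan is to identify the assignment $N\mapsto-\otimes_{\cc A}N$ with a $\cc V$-left Kan extension and then invoke a standard enriched-adjunction argument. As remarked already above the proposition, the restriction of $-\otimes_{\cc A}N$ to $\modd\cc A$ is the $\cc V$-left Kan extension $y_!N$ of $N\colon\cc A\to\cc V$ along the $\cc V$-Yoneda embedding $y\colon\cc A\hookrightarrow\modd\cc A$, $a\mapsto[-,a]$. Consequently $N\mapsto y_!N$ underlies a $\cc V$-functor $\cc A\Mod\to{}_{\cc A}\cc C$ that is $\cc V$-left adjoint to the restriction $\cc V$-functor $y^*\colon{}_{\cc A}\cc C\to\cc A\Mod$. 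Because $y$ is $\cc V$-fully faithful, the unit $\eta_N\colon N\to y^*y_!N$ is a $\cc V$-natural isomorphism: at $a\in\cc A$ it is the identity $N(a)=[-,a]\otimes_{\cc A}N=(y_!N)(y(a))$.

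The $\cc V$-adjunction now yields, for all $N,N'\in\cc A\Mod$, natural isomorphisms in $\cc V$
$$
\cc V_{{}_{\cc A}\cc C}(y_!N,y_!N')\;\cong\;\cc V_{\cc A\Mod}(N,y^*y_!N')\;\cong\;\cc V_{\cc A\Mod}(N,N'),
$$
which is exactly $\cc V$-full faithfulness. I expect the only real hurdle to be verifying that the $\cc V$-functorial structure on $N\mapsto-\otimes_{\cc A}N$ coming from the explicit coend bimorphisms of the preceding paragraph agrees with the one inherited from the Kan-extension adjunction; this coherence check ultimately reduces to the enriched co-Yoneda isomorphism of Theorem~\ref{polezno} and should be routine.
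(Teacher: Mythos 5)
Your proposal is correct and follows essentially the same route as the paper: both establish the second assertion by identifying $N\mapsto-\otimes_{\cc A}N$ (restricted to $\modd\cc A$) with the enriched left Kan extension along the $\cc V$-fully faithful embedding $\cc A\hookrightarrow\modd\cc A$, and then invoke $\cc V$-full faithfulness of that Kan extension. The paper simply cites the proof of Borceux, Theorem~6.7.7, where you have unwound that citation into the standard adjunction argument (unit an isomorphism because the embedding is fully faithful, hence $\cc V_{{}_{\cc A}\cc C}(y_!N,y_!N')\cong\cc V_{\cc A\Mod}(N,N')$); the coherence worry in your final paragraph is unnecessary, since the $\cc V$-functor structure on the assignment $N\mapsto-\otimes_{\cc A}N$ is the one supplied by the Kan-extension $\cc V$-adjunction itself, while the bimorphism construction of the preceding paragraph concerns only the $\cc V$-functor structure of each individual $-\otimes_{\cc A}N\colon\Mod\cc A\to\cc V$.
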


\begin{proof}
The first part has been verified above. We have also mentioned that the restriction of $-\otimes_{\cc A}N$, $N\in\cc A\Mod$, to the 
full $\cc V$-subcategory $\modd\cc A$ of $\Mod\cc A$ is recovered as the enriched left Kan extension of the $\cc V$-functor
$N:\cc A\to\cc V$ along the full embedding $\cc A\hookrightarrow\modd\cc A$, see~\cite[Theorem~6.7.7]{Bor}. As the enriched Kan extension 
is $\cc V$-fully faithful by the proof of ~\cite[Theorem~6.7.7]{Bor}, the second part follows as well.
\end{proof}

The following result relating $\cc A$-modules and generalised $\cc A$-modules is an enriched version for
~\cite[Proposition~7.1]{GG} and~\cite[Corollary~2.2]{GG1}.

\begin{thm}\label{recoll}
Define an enriched localizing subcategory 
$\cc S_{\cc A}:=\{Y\in\ac\mid Y(a)=0\textrm{ for all $a\in\cc A$}\}\subset\ac.$ There is a recollement 
\begin{diagram*}[column
sep=huge] \cc S_{\cc A}\arrow[r,"i"] &{\ac}
   \bendR{i_R}
   \bendL{i_L}
   \arrow[r,"r"]			
& {\cc A\Mod}\bendR{r_R} 
   \bendL{-\otimes_{\cc A}?} 
\end{diagram*}
with functors $i,r$ being the canonical inclusion and restriction functors
respectively. The functor $r_R$ is the enriched right Kan extension,
$i_R$ is the torsion functor associated with the localizing subcategory $\cc S_{\cc A}$.
Furthermore, if $\ac/\cc S_{\cc A}$ is the quotient category of $\ac$ with respect to $\cc S_{\cc A}$,
the functor $\cc A\Mod\to\ac/\cc S_{\cc A}$ sending $M$ to $(-\otimes_{\cc A}M)_{\cc S_{\cc A}}$
is an equivalence of categories.
\end{thm}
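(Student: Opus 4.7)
The plan is to exhibit $r$ as restriction along the $\cc V$-fully faithful inclusion $\iota:\cc A\hookrightarrow\modd\cc A$, and then to assemble the recollement from enriched Kan extensions together with the localisation theory of Section~\ref{moduli}.

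First I would check that $\cc S_{\cc A}$ is an enriched localising subcategory of $\ac$. Since (co)limits and exactness in $\ac$ are computed pointwise, and $(Y\oslash V)(a)=Y(a)\otimes V$ for every $a\in\cc A$ and $V\in\cc V$, closure under subobjects, quotients, extensions, directed colimits and the $\cc V$-action is immediate. Proposition~\ref{svoistva} then supplies the torsion right adjoint $i_R$ of $i$.

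Next I would produce the adjunctions on the right-hand side. Because $\iota$ is $\cc V$-fully faithful and $\cc V$ is bicomplete, the enriched left and right Kan extensions along $\iota$ both exist, giving a $\cc V$-adjoint triple $r_L\dashv r\dashv r_R$ in which both $r_L$ and $r_R$ are $\cc V$-fully faithful (the standard fact that Kan extensions along fully faithful $\cc V$-functors are fully faithful). Proposition~\ref{tensorN} already identifies $r_L$ with $-\otimes_{\cc A}?$, and by construction $\ker r=\cc S_{\cc A}$. Since $r$ is exact with kernel $\cc S_{\cc A}$ and admits a fully faithful right adjoint, the induced functor $\bar r:\ac/\cc S_{\cc A}\to\cc A\Mod$ is an equivalence whose quasi-inverse is $M\mapsto(-\otimes_{\cc A}M)_{\cc S_{\cc A}}$; the key point is that the counit $\epsilon:r_L r\to\id$ has both kernel and cokernel in $\cc S_{\cc A}$, since its image under $r$ is an isomorphism.

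To finish the recollement I would construct the remaining left adjoint $i_L$ of $i$ by setting $i_L(Y):=\coker(\epsilon_Y:r_L r(Y)\to Y)$. Exactness of $r$ combined with $rr_L\cong\id$ forces $i_L(Y)\in\cc S_{\cc A}$, while applying $\Hom(-,Z)$ for $Z\in\cc S_{\cc A}$ to the defining cokernel sequence, together with $\Hom(r_L rY,Z)\cong\Hom(rY,rZ)=0$, yields the adjunction $\Hom(Y,iZ)\cong\Hom(i_LY,Z)$. The remaining recollement compatibilities (vanishing of consecutive compositions, the unit/counit triangles) then follow mechanically. The point that requires most care is ensuring the above adjunctions are genuine $\cc V$-adjunctions rather than merely ordinary ones, but this is guaranteed by the bicompleteness of $\cc V$ and the enriched Kan extension framework.
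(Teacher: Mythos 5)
Your proposal is correct and reaches the same conclusion as the paper, but by a more self-contained route. The paper's proof is essentially one sentence: it identifies $r_L$ with $M\mapsto-\otimes_{\cc A}M$ and then defers to the general recollement machinery of \cite[Theorems~3.4--3.5]{GJ2} together with \cite[Theorem~5.3]{AG}. You instead unpack this into a direct argument: pointwise (co)limits give that $\cc S_{\cc A}=\ker r$ is an enriched localising subcategory; bicompleteness of $\cc V$ and $\cc V$-full faithfulness of $\cc A\hookrightarrow\modd\cc A$ yield the adjoint triple $r_L\dashv r\dashv r_R$ of enriched Kan extensions with $r_L,r_R$ fully faithful; Gabriel's quotient criterion (exact functor with fully faithful right adjoint) gives the equivalence $\ac/\cc S_{\cc A}\simeq\cc A\Mod$, and the observation that $r\epsilon$ is invertible identifies the quasi-inverse with $M\mapsto(r_LM)_{\cc S_{\cc A}}$; finally $i_L:=\coker\epsilon$ supplies the missing left adjoint. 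Both proofs hinge on the same key identification of $r_L$ with $-\otimes_{\cc A}?$. One small imprecision: you credit this identification to Proposition~\ref{tensorN}, but it is actually established in the discussion immediately preceding that proposition (via the enriched Kan extension characterisation in \cite[Theorem~6.7.7]{Bor} and \cite[Proposition~4.33]{Kelly}); Proposition~\ref{tensorN} itself records the resulting full faithfulness. This does not affect the correctness of your argument.
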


\begin{proof}
The theorem follows from~\cite[Theorems~3.4-3.5]{GJ2} and~\cite[Theorem~5.3]{AG} if we observe that
the left Kan extension functor $r_L:\cc A\Mod\to\ac$ equals the functor $M\mapsto-\otimes_{\cc A}M$.
\end{proof}

The Ziegler spectrum can be defined for arbitrary locally coherent Grothendieck categories (see~\cite{Herzog, Krause}).
Although $\cc C_{\cc A}$ is locally coherent by Theorem~\ref{vazhno}, and therefore the Ziegler spectrum 
$\Zg\cc C_{\cc A}$ in the sense of~\cite{Herzog, Krause} 
applies to $\ac$, this is actually not what we are going to investigate as $\Zg\cc C_{\cc A}$ does not capture the enriched
category information of $\cc C_{\cc A}$. Below we will define the Ziegler spectrum $\Zg_{\cc A}$ associated with $\cc C_{\cc A}$
that captures both the enriched category information of $\cc C_{\cc A}$ and the machinery of~\cite{Herzog, Krause}
(the points of $\Zg\cc C_{\cc A}$ and $\Zg_{\cc A}$ are the same but topologies are different).
To this end, we need to introduce and study pure-injective $\cc A$-modules and $\underline{\coh}$-injective objects.

\section{Pure-injective modules and $\underline{\coh}$-injective objects}\label{sectionpureinj}

\begin{defs}\label{cohinj}
A generalised $\cc A$-module $X\in\ac$ is said to be {\it $\underline{\coh}$-injective\/} if the functor 
$[-,X]:\coh\ac\to\cc V^{\op}$ is exact.
\end{defs}

\begin{thm}\label{cohinjobj}
The following conditions are equivalent for $X\in\ac$:
\begin{enumerate}
\item $X$ is $\underline{\coh}$-injective;

\item $X$ is right exact on $\modd\cc A$;

\item $X$ is isomorphic to $-\otimes_{\cc A}N$ for some $N\in\cc A\Mod$.
\end{enumerate}
\end{thm}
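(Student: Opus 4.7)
The plan is to close the cycle via $(3)\Rightarrow(2)$, $(1)\Rightarrow(2)$, $(2)\Rightarrow(3)$, and $(2)\Rightarrow(1)$, with (2) as a hub. First, $(3)\Rightarrow(2)$ is immediate from the fact that $-\otimes_{\cc A}N:\Mod\cc A\to\cc V$ has a $\cc V$-right adjoint $V\mapsto(a\mapsto[N(a),V])$, so preserves all $\cc V$-colimits. For $(1)\Rightarrow(2)$, I would take a cokernel sequence $M_1\to M_0\to M\to 0$ in $\modd\cc A$, apply the enriched Yoneda embedding $\modd\cc A^{\op}\hookrightarrow\ac$ to get a left exact sequence $0\to[M,-]\to[M_0,-]\to[M_1,-]$ in $\ac$, factor it into two short exact sequences in $\coh\ac$ through the image $I$ of $[M_0,-]\to[M_1,-]$ and its cokernel, and then read off right exactness of $X$ on $M_1\to M_0\to M\to 0$ from the two SES obtained by applying $[-,X]$ (exact on $\coh\ac$ by (1)) and using the enriched Yoneda identification $[[M,-],X]=X(M)$.

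For $(2)\Rightarrow(3)$, I would set $N:=X|_{\cc A}\in\cc A\Mod$ and observe that, since the restriction of $-\otimes_{\cc A}N$ to $\modd\cc A$ is the enriched left Kan extension of $N$ along $\cc A\hookrightarrow\modd\cc A$ (as used before Proposition~\ref{tensorN}), the universal property supplies a canonical comparison $\alpha:-\otimes_{\cc A}N\to X$ that is the identity on $\cc A$. Using a presentation $P_1\to P_0\to M\to 0$ of any $M\in\modd\cc A$ by finite direct sums of generators $g\oslash[-,a]$, right exactness of both sides (the left via $(3)\Rightarrow(2)$), and the five lemma, iso-ness of $\alpha$ reduces to iso-ness at these generators. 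The coend formula gives $(g\oslash[-,a])\otimes_{\cc A}N\cong g\oslash N(a)$, and dualizability of $g$ --- which is where the standing assumption on $\cc G$ is crucial --- makes $g\oslash-$ a $\cc V$-absolute weighted colimit (equivalently the cotensor by $g^{\vee}$), so $X$ preserves it: $X(g\oslash[-,a])\cong g\oslash X([-,a])=g\oslash N(a)$, compatibly with $\alpha$. This tensor-preservation step is the main obstacle.

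Finally, for $(2)\Rightarrow(1)$, I would reduce to $\underline{\Ext}^1(C,X)=0$ for every $C\in\coh\ac$. A presentation $[N_0,-]\xrightarrow{[f,-]}[M_0,-]\to C\to 0$ with $f:M_0\to N_0$ in $\modd\cc A$ splits, using $\kr([f,-])\cong[\coker f,-]$, into two short exact sequences $0\to[\coker f,-]\to[N_0,-]\to I\to 0$ and $0\to I\to[M_0,-]\to C\to 0$ in $\coh\ac$, where $I:=\im([f,-])$. Since each $[M,-]$ is enriched-projective in $\ac$ (so $\underline{\Ext}^i([M,-],-)=0$ for $i\geq 1$ by pointwise computation of limits), applying $[-,X]$ to the first SES and invoking right exactness of $X$ on $M_0\to N_0\to\coker f\to 0$ identifies $[I,X]$ with $\im(X(f))\subseteq X(N_0)$; the second SES then shows $X(M_0)\twoheadrightarrow[I,X]$, which kills $\underline{\Ext}^1(C,X)$.
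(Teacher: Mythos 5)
Your proposal follows the same cycle and the same essential computations as the paper. For $(1)\Rightarrow(2)$ and $(2)\Rightarrow(3)$ your arguments are the paper's, with the dualizability of $g$ playing exactly the role you highlight; for $(3)\Rightarrow(2)$ your adjoint-functor justification is a modest variant of the paper's (which uses that $M\mapsto M\otimes_{\cc A}-$ is left adjoint to restriction and then evaluates pointwise), and both are fine.

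The genuine difference is in $(2)\Rightarrow(1)$, and there is a small gap there. The paper works directly inside $D^-(\coh\ac)$: it shows that $\cc R=\{[M,-]\mid M\in\modd\cc A\}$ is adapted to $[-,X]$ (your computation that $X(M_0)\twoheadrightarrow[I,X]$, which is the paper's ``$k^*$ is an epimorphism'', is precisely the core of that adaptedness) and then runs the long exact cohomology sequence of a triangle. You instead reduce to showing $\underline{\Ext}^1(C,X)=0$ for all $C\in\coh\ac$; but this reduction presupposes that $\underline{\Ext}^\bullet(-,X)$ has a long exact sequence in the \emph{first} variable, and that is not automatic here: $\ac$ has no projectives, so the LES in the contravariant slot must be argued for. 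It does hold, because if one defines $\underline{\Ext}^i(-,X)$ by an injective coresolution $X\to I^\bullet$ in $\ac$, then applying $[-,I^n]$ to a short exact sequence in $\coh\ac$ yields a short exact sequence of complexes precisely because $[-,E]$ is exact for $E$ injective --- which is Lemma~\ref{tochen} of the paper. You should cite that lemma (or equivalently re-derive the paper's adaptedness statement) to license the reduction; once that is done, your dimension-shift is a cleaner packaging of the same content and also uses Lemma~\ref{tochen} again implicitly to compute $\underline{\Ext}^1([M_0,-],X)=H^1(I^\bullet(M_0))=0$. With this justification supplied, your $(2)\Rightarrow(1)$ is correct and arguably more elementary than the paper's adapted-class/derived-category formulation.
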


\begin{proof}
$(1)\Rightarrow(2)$. Given an exact sequence $K\xrightarrow{f} L\xrightarrow{g} M\to 0$ in $\modd\cc A$,
one has a long exact sequence in the Abelian $\cc V$-category $\coh\ac$
   \begin{equation}\label{ccoh}
    0\to[M,-]\xrightarrow{[f,-]}[L,-]\xrightarrow{[g,-]}[K,-]\xrightarrow{h}C\to 0.
   \end{equation}
As $X$ is $\underline{\coh}$-injective by assumption, the functor $[-,X]$ is exact on $\coh\ac$. Therefore 
one has a long exact sequence in $\cc V$
   $$0\to[C,X]\xrightarrow{h^*}X(K)\xrightarrow{[g,X]}X(L)\xrightarrow{[f,X]}X(M)\to 0.$$
We see that $X$ is right exact on $\modd\cc A$.

$(2)\Rightarrow(3)$. Suppose $X$ is right exact. Denote by $N$ the restriction of $X$ to the full $\cc V$-subcategory
$\cc A$ of $\modd\cc A$. Then $N\in\cc A\Mod$ and the counit of the adjunction gives a morphism
$\epsilon:-\otimes_{\cc A}N\to X$ in $\ac$. For any generator $g\oslash[-,a]$ of $\Mod\cc A$ one has
   \begin{multline*}
    X(g\oslash[-,a])=[[g\oslash[-,a],-],X]=[[g,[[-,a],-]],X]=[g^\vee\oslash[[-,a],-],X]=\\
    =[g^\vee,X([-,a])]=g\oslash X([-,a])=g\oslash N(a).
   \end{multline*}
Any $M\in\modd\cc A$ fits into an exact sequence
   $$\oplus_{i=1}^n g_i\oslash[-,a_i]\to\oplus_{j=1}^m g_j\oslash[-,a_j]\to M\to 0.$$
One gets a commutative diagramin $\cc V$
   $$\xymatrix{\oplus_{i=1}^n X(g_i\oslash[-,a_i])\ar[r]\ar@{=}[d]&\oplus_{j=1}^m X(g_j\oslash[-,a_j])\ar[r]\ar@{=}[d]&X(M)\ar[r] &0\\
                       \oplus_{i=1}^n g_i\oslash N(a_i)\ar[r]&\oplus_{j=1}^m g_j\oslash N(a_j)\ar[r]&M\otimes_{\cc A}N\ar[r]\ar[u]^\epsilon &0}$$
It follows that $\epsilon:-\otimes_{\cc A}N\to X$ is an isomorphism.

$(3)\Rightarrow(2)$. As the functor $\Mod\cc A\to\cc C_{\cc A}$, $M\mapsto M\otimes_{\cc A}-$, is left adjoint to the restriction functor,
it is right exact and preserves arbitrary colimits. If $N\in\cc A\modd$, it follows that 
$-\otimes_{\cc A}N$ is right exact. If $N$ is any left $\cc A$-module, then $N=\lp_I N_i$ with $N_i\in\cc A\modd$. As the direct limit
functor is exact in $\cc V$ and $-\otimes_{\cc A}N\cong\lp_I(-\otimes_{\cc A}N_i)$ is a 
direct limit of right exact functors, $-\otimes_{\cc A}N$ is right exact itself.

$(2)\Rightarrow(1)$. Let $C\in\coh\ac$ and let $X$ be right exact. Then it fits into an exact sequence of the form~\eqref{ccoh}. One has a diagram
   $$\xymatrix{0\ar[r]&[M,-]\ar[r]^{[f,-]}&[L,-]\ar[rr]^{[g,-]}\ar@{->>}[rd]_\ell&&[K,-]\ar[r]^{h}&C\ar[r]&0\\
                                                                   &&&\kr h\ar@{>->}[ur]_k}$$
It induces an exact sequence in $\cc V$
   $$0\to[C,X]\xrightarrow{h^*} X(K)\xrightarrow{k^*}[\kr h,X].$$ 
Since $X$ is right exact, the row of the diagram
      $$\xymatrix{X(K)\ar[dr]_{k^*}\ar[rr]^{X(g)}&&X(L)\ar[r]^{X(f)}&X(M)\ar[r]&0\\
                                                                   &[\kr h,X]\ar@{>->}[ur]_{\ell^*}}$$
is exact, and hence $\im X(g)=\coker(\textrm{ker}\,X(g))\cong\kr(\textrm{coker}\, X(g))=\kr X(f)=[\kr h,X]$. 
We see that $k^*$ is an epimorphism.

The functor between Abelian categories $[-,X]:\coh\ac\to\cc V^{\op}$ is right exact. The above arguments yield the following properties:

(1) the collection of objects $\cc R=\{[M,-]\mid M\in\modd\cc A\}$ is adapted to the functor $[-,X]$ in the sense of~\cite[Section~III.6.3]{GM},
and hence it induces a triangulated functor between derived categories $L[-,X]:D^-(\coh\ac)\to D^-(\cc V^{\op})=D^+(\cc V)$ --- see~\cite[Section~III.6.7]{GM};

(2) $[-,X]$ takes any resolution $R_\bullet\to C$ of $C\in\coh\ac$ by objects in $\cc R$ to an acyclic complex
$[C,X]\to[R_\bullet,X]$ in $\Ch^+(\cc V)$;

A short exact sequence $0\to A_1\to A_2\to A_3\to 0$ in $\coh\ac$ gives rise to a triangle 
$A_3[-1]\to A_1\to A_2\to A_3$ in $D^-(\coh\ac)$. Similarly to constructing projective resolutions there is a resolution
$R_\bullet^i: [M_i^2,-]\hookrightarrow[M_i^1,-]\to[M_i^0,-]$ of each $A_i$, $i=1,2,3$, such that the 
short exact sequence $0\to A_1\to A_2\to A_3\to 0$ fits to a commutative diagram of complexes
   $$\xymatrix{R_\bullet^1\ar[r]^\alpha\ar@{->>}[d]&R_\bullet^2\ar[r]^\beta\ar@{->>}[d]&R_\bullet^3\ar@{->>}[d]\\
                       A_1\ar@{ >->}[r]&A_2\ar@{->>}[r]&A_3}$$
The canonical functor $K^-(\cc R)[S_{\cc R}^{-1}]\to D^-(\coh\ac)$ is an equivalence of triangulated categories by~\cite[Proposition~III.6.4]{GM},
where $S_{\cc R}^{-1}$ is the class of quasi-isomorphisms in $K^-(\cc R)$. It follows that the sequence of resolutions above 
fits to a triangle 
   $$R_\bullet^3[-1]\to R_\bullet^1\xrightarrow{\alpha} R_\bullet^2\xrightarrow{\beta} R_\bullet^3$$ 
in $K^-(\cc R)[S_{\cc R}^{-1}]$. After applying the functor $L[-,X]$ to it, one gets a triangle in $D^+(\cc V)$
   $$[R_\bullet^3,X]\xrightarrow{\beta^*}[R_\bullet^2,X]\xrightarrow{\alpha^*}[R_\bullet^1,X]\to [R_\bullet^3,X][1].$$
This triangle induces a long exact sequence in $\cc V$ of cohomology objects  
   $$0\to H^0([R_\bullet^3,X])=[A_3,X]\to H^0([R_\bullet^2,X])=[A_2,X]\to H^0([R_\bullet^1,X])=[A_1,X]\to H^1([R_\bullet^3,X])=0.$$
Thus $[-,X]$ is an exact functor, as was to be shown. 
\end{proof}

\begin{defs}\label{puredef}
We say that an $\cc A$-homomorphism $f:K\to L$ in $\cc A\Mod$ is a {\it pure-monomorphism\/} if for any $M\in\modd\cc A$
the induced morphism $M\otimes f:M\otimes_{\cc A}K\to M\otimes_{\cc A}L$ in $\cc V$ is a monomorphism or, equivalently, 
$-\otimes f:-\otimes_{\cc A}K\to-\otimes_{\cc A}L$ is a monomorphism in $\ac$.
A left $\cc A$-module $Q$ is said to be {\it pure-injectiye\/} if every pure-monomorphism $p:Q\to N$ is a split monomorphism.
\end{defs}

\begin{cor}\label{pureinj}
An object $E\in\ac$ is injective if and only if it is isomorphic to one of the functors $-\otimes_{\cc A}Q$, 
where $Q$ is a pure-injective left $\cc A$-module.
\end{cor}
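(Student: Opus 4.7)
The plan is to use Theorem~\ref{cohinjobj} as the bridge between injectivity in $\ac$ and the tensor-product description, and then exploit the full faithfulness of $N \mapsto -\otimes_{\cc A} N$ supplied by Proposition~\ref{tensorN} to translate splittings across the two sides.

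\textbf{Forward direction.} Suppose $E \in \ac$ is injective. Because $E$ is injective, the functor $[-,E] \colon \ac \to \cc V^{\op}$ is exact; the proof follows the pattern of Lemma~\ref{tochen}, using that the generator $G = \bigoplus_{g \in \cc G} g$ of $\cc V$ is flat by Corollary~\ref{zabavnocor} and detects epimorphisms in $\cc V$. Restricting to $\coh\ac$, this shows that $E$ is $\underline{\coh}$-injective, so by Theorem~\ref{cohinjobj} there exists $Q \in \cc A\Mod$ with $E \cong -\otimes_{\cc A} Q$. To check that $Q$ is pure-injective, let $p \colon Q \to N$ be a pure-monomorphism in $\cc A\Mod$. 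By Definition~\ref{puredef}, $-\otimes_{\cc A} p \colon -\otimes_{\cc A} Q \to -\otimes_{\cc A} N$ is a monomorphism in $\ac$; since $-\otimes_{\cc A} Q \cong E$ is injective, this monomorphism splits by some $\sigma \colon -\otimes_{\cc A} N \to -\otimes_{\cc A} Q$. By the full faithfulness of $-\otimes_{\cc A}(-)$ from Proposition~\ref{tensorN}, $\sigma = -\otimes_{\cc A} s$ for a unique $s \colon N \to Q$, and the identity $\sigma \circ (-\otimes_{\cc A} p) = \id$ implies $s \circ p = \id_Q$, so $p$ splits in $\cc A\Mod$.

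\textbf{Reverse direction.} Suppose $Q$ is pure-injective; I want to prove $-\otimes_{\cc A} Q$ is injective in $\ac$. Form the injective envelope $\iota \colon -\otimes_{\cc A} Q \hookrightarrow E$ in the Grothendieck category $\ac$. By the forward direction (which already uses only that $E$ is injective), $E \cong -\otimes_{\cc A} Q'$ for some $Q' \in \cc A\Mod$. The monomorphism $\iota$ now lives between two objects in the image of $-\otimes_{\cc A}(-)$, so Proposition~\ref{tensorN} yields a unique $p \colon Q \to Q'$ in $\cc A\Mod$ with $\iota = -\otimes_{\cc A} p$; since monomorphisms in $\ac = [\modd\cc A, \cc V]$ are detected pointwise (Lemma~\ref{bicomplete}), the map $p$ is a pure-monomorphism. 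By pure-injectivity of $Q$, $p$ splits, and applying $-\otimes_{\cc A}(-)$ shows that $-\otimes_{\cc A} Q$ is a direct summand of its injective envelope $E$. Essentiality of $\iota$ then forces $\iota$ to be an isomorphism, so $-\otimes_{\cc A} Q$ is injective.

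\textbf{Main obstacle.} The one genuine verification (beyond invoking earlier results) is that injectivity of $E$ in $\ac$ upgrades $[-,E]$ to an exact functor; this is the analog of Lemma~\ref{tochen} for $\ac$ rather than $\cc A\Mod$. The argument transports verbatim: it relies only on that $\cc V$ has the flat dualizable generator $G$, that $\ac$ is a closed $\cc V$-module with tensors computed via $G \oslash -$ preserving short exact sequences, and that $\Hom_{\cc V}(G,-)$ is faithful and thus reflects epimorphisms. Once this is granted, everything else is routine adjunction and full-faithfulness bookkeeping.
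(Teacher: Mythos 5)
Your proposal is correct and follows essentially the same approach as the paper: both directions use Lemma~\ref{tochen} and Theorem~\ref{cohinjobj} to identify injectives with right-exact functors $-\otimes_{\cc A}Q$, and then transfer splittings across the full faithfulness of Proposition~\ref{tensorN}. The only cosmetic difference is in the reverse direction, where you take the injective envelope of $-\otimes_{\cc A}Q$ itself rather than (as the paper does) the envelope of the target $X$ of an arbitrary monomorphism; both are standard and equivalent ways to conclude.
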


\begin{proof}
The proof literally repeats that of~\cite[Proposition~4.1]{Herzog}. In detail, if
$E\in\ac$ is injective, then it is $\underline{\coh}$-injective by Lemma~\ref{tochen}. Theorem~\ref{cohinjobj} implies it is isomorphic to one of the functors 
$-\otimes_{\cc A}Q$ with $Q\in\cc A\Mod$. The injective hypothesis implies that $Q$ must be pure-injective
(we implicitly use Proposition~\ref{tensorN} here).

Conversely, suppose $Q$ is pure-injective and $\alpha:-\otimes_{\cc A}Q\to X$ is a monomorphism in $\ac$.
Let $E$ be the injective envelope of $X$. By the first part of the proof $E\cong-\otimes_{\cc A}N$ for some 
pure-injective $N\in\cc A\Mod$. Then the composite monomorphism $-\otimes_{\cc A}Q\bl{\alpha}\hookrightarrow X\hookrightarrow-\otimes_{\cc A}N$
splits in $\ac$ as the pure-monomorphism $Q\hookrightarrow N$ splits in $\cc A\Mod$ (we implicitly use Proposition~\ref{tensorN} here).
It follows that $\alpha$ is a split monomorphism.
\end{proof}

We have now collected all the necessary information to pass to the definition of the Ziegler spectrum of $\cc A$.

\section{The Ziegler spectrum of an enriched ringoid}\label{sectionzg}

As $\ac$ is locally coherent, the full $\cc V$-subcategory of coherent objects $\coh\ac$ is Abelian.
A full $\cc V$-subcategory $\cc S$ of $\coh\ac$ is called an {\it enriched Serre subcategory\/} if it is
a Serre subcategory after forgetting the $\cc V$-structure and $m\oslash A\in\cc S$ for any finitely
presented object $m$ of $\cc V$ and any $A\in\cc S$. Recall from~\cite{Herzog, Krause} that any localizing 
subcategory of finite type in $\ac$ is of the form $\vec{\cc S}$, where $\cc S$ is a Serre subcategory and every object
of $\vec{\cc S}$ is a direct limit of objects from $\cc S$. 

\begin{prop}\label{serreV}
The following statements are equivalent for a localising subcategory of finite type $\vec{\cc S}$ of $\ac$:
\begin{enumerate}
\item $\vec{\cc S}$ is enriched;
\item ${\cc S}$ is an enriched Serre subcategory;
\item $g\oslash A\in\cc S$ for any generator $g\in\cc G$ and $A\in\cc S$.
\end{enumerate}
\end{prop}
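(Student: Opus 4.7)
The plan is to prove the cycle $(1) \Rightarrow (3) \Rightarrow (2) \Rightarrow (1)$, using throughout the identification $\cc S = \vec{\cc S} \cap \coh\ac$ for a finite-type localising subcategory recalled just before the proposition. I expect $(3) \Rightarrow (2)$ and $(2) \Rightarrow (1)$ to be essentially routine reductions via presentations and direct limits; the only genuinely nonformal point is the coherence verification inside $(1) \Rightarrow (3)$, and this is precisely where dualizability of $\cc G$ enters in an essential way.

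For $(1) \Rightarrow (3)$, since $g \in \cc G \subset \cc V$ the enrichment of $\vec{\cc S}$ immediately yields $g \oslash A \in \vec{\cc S}$; I only need to check that $g \oslash A$ is still coherent. Dualizability of $g$ makes $g \oslash -$ naturally isomorphic, on $\ac$, to $[g^\vee, -]$, hence exact (pointwise, via Corollary~\ref{zabavnocor}), and sends each coherent generator $[M,-]$ to $[M \oslash g^\vee, -]$, again a coherent generator, as recorded in the proof of Theorem~\ref{vazhno}. Presenting $A$ as the cokernel of a map between coherent generators and applying this exact functor then shows $g \oslash A \in \coh\ac$, so that $g \oslash A \in \cc S$.

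For $(3) \Rightarrow (2)$, fix $A \in \cc S$ and $m \in \fp\cc V$. Because $\cc G$ consists of finitely presented generators, $m$ admits a presentation $\bigoplus g_i \to \bigoplus g_j \to m \to 0$ in $\cc V$. By Definition~\ref{enri} the functor $- \oslash A$ is left adjoint to $[A,-]$, hence right exact, so $m \oslash A$ is exhibited as the cokernel of a map between finite direct sums of objects $g \oslash A$, each already known to lie in $\cc S$ by (3). The Serre property of $\cc S$ in $\coh\ac$ then forces $m \oslash A \in \cc S$.

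Finally, for $(2) \Rightarrow (1)$, take $V \in \cc V$ and $X \in \vec{\cc S}$, and write $V = \lp_j V_j$ with $V_j \in \fp\cc V$ (using local finite presentability of $\cc V$) and $X = \lp_i X_i$ with $X_i \in \cc S$. Since $-\oslash-$ is a left adjoint in each slot, it commutes with colimits in each variable, giving $V \oslash X \cong \lp_{i,j}(V_j \oslash X_i)$. By (2) each $V_j \oslash X_i$ belongs to $\cc S$, so $V \oslash X \in \vec{\cc S}$, completing the proof.
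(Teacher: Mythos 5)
Your proof is correct and establishes the same equivalence, but you traverse the cycle the other way ($(1)\Rightarrow(3)\Rightarrow(2)\Rightarrow(1)$ rather than the paper's $(1)\Rightarrow(2)\Rightarrow(3)\Rightarrow(1)$), and, more substantively, you verify the one nonformal point --- coherence of the tensored object --- by a different mechanism. The paper proves $m\oslash A\in\coh\ac$ for arbitrary $m\in\fp(\cc V)$ in a single stroke using Lemma~\ref{fprV}: since $A$ is coherent, $[A,-]$ commutes with filtered colimits, so $\Hom(m\oslash A,\lp_I X_i)\cong\lp_I\Hom(m\oslash A,X_i)$, whence $m\oslash A$ is finitely presented and therefore coherent because $\ac$ is locally coherent. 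You instead prove coherence only for $g\oslash A$ with $g\in\cc G$ inside $(1)\Rightarrow(3)$, using dualizability to identify $g\oslash-$ with the exact functor $[g^\vee,-]$, which carries coherent generators $[M,-]$ to coherent generators $[M\oslash g^\vee,-]$ and hence preserves $\coh\ac$; the passage from generators to all of $\fp(\cc V)$ is then absorbed into your step $(3)\Rightarrow(2)$ via a presentation of $m$ and right-exactness of $-\oslash A$. Both routes ultimately lean on Corollary~\ref{zabavnocor} and the abelianness of $\coh\ac$. Yours makes the role of the dualizable generators more visible and only needs right-exactness of $g\oslash-$ for the coherence check, while the paper's Lemma~\ref{fprV} computation is shorter and handles all finitely presented $m$ uniformly.
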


\begin{proof}
$(1)\Rightarrow(2)$. The only thing to verify here is to show that $m\oslash A\in\coh\ac$ for any  
$m\in\fp(\cc V)$ and $A\in\cc S$ as $m\oslash A\in\vec{\cc S}$ by assumption.
Using Lemma~\ref{fprV}, one has
   $$(m\oslash A,\lp_I X_i)=(m,[A,\lp_IX_i])=\lp_I(m,[A,X_i])=\lp_I(m\oslash A,X_i).$$

$(2)\Rightarrow(3)$. This is straightforward.

$(3)\Rightarrow(1)$. Let $M\in\cc V$ and $X\in\vec{\cc S}$. Then $M=\lp_Im_i$ and $X=\lp_JX_j$
for some $m_i\in\fp(\cc V)$ and $X_j\in\cc S$. It follows that $M\oslash X\cong\lp_{I,J}m_i\oslash X_j$.
It is enough to check that each $m_i\oslash X_j$ is in $\cc S$.
Each $m_i$ fits in an exact sequence
   $$\oplus_{s=1}^mg_s\to\oplus_{t=1}^ng_t\to m_i\to 0,\quad g_s,g_t\in\cc G.$$
It induces an epimorphism $\oplus_{t=1}^n(g_t\oslash X_j)\twoheadrightarrow m_i\oslash X_j$. Since
each $g_t\oslash X_j\in\cc S$ by assumption and $\cc S$ is a Serre subcategory,
we see that $m_i\oslash X_j\in\cc S$, as required.
\end{proof}

By theorems of Herzog~\cite{Herzog} and Krause~\cite{Krause} there is a bijective correspondences between
Serre subcategories in $\coh\cc C$ and localising subcategories $\cc T$ of $\cc C$ of finite type, where $\cc C$
is a locally coherent Grothendieck category. Combining this correspondence with the preceding 
proposition, one gets the following statement.

\begin{cor}\label{serreVcor1}
There is an inclusion-preserving bijective correspondence between enriched
Serre subcategories $\cc S$ of $\coh\ac$ and enriched localising subcategories $\cc T$ of $\ac$ of finite type. This correspondence 
is given by the functions
   $$\cc S\mapsto\vec{\cc S},\quad\cc T\mapsto\cc T\cap\coh\ac,$$
which are mutual inverses.
\end{cor}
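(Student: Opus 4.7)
The plan is simply to combine the Herzog--Krause correspondence with Proposition~\ref{serreV}, which does all the real work. Precisely, the results of Herzog~\cite{Herzog} and Krause~\cite{Krause} recalled just before Proposition~\ref{serreV} already produce an inclusion-preserving bijection
\[
\{\text{Serre subcategories of }\coh\ac\}\;\longleftrightarrow\;\{\text{localising subcategories of finite type in }\ac\}
\]
given by $\cc S\mapsto\vec{\cc S}$ with inverse $\cc T\mapsto\cc T\cap\coh\ac$. So the only thing left is to show that this bijection restricts to the enriched classes on both sides.

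For this, the first step is to take $\cc S$ an enriched Serre subcategory of $\coh\ac$. By Proposition~\ref{serreV} (implication $(2)\Rightarrow(1)$), the corresponding localising subcategory $\vec{\cc S}$ of finite type is enriched. Conversely, if $\cc T=\vec{\cc S}$ is an enriched localising subcategory of finite type, then Proposition~\ref{serreV} (implication $(1)\Rightarrow(2)$) tells us that $\cc S=\cc T\cap\coh\ac$ is enriched Serre. Thus the bijection from Herzog--Krause restricts to the asserted bijection.

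Finally, inclusion-preservation is immediate from the explicit descriptions of the two maps: $\cc S\subseteq\cc S'$ gives $\vec{\cc S}\subseteq\vec{\cc S'}$, and $\cc T\subseteq\cc T'$ gives $\cc T\cap\coh\ac\subseteq\cc T'\cap\coh\ac$. There is no real obstacle here; the substantive content has already been isolated in Proposition~\ref{serreV}, so the corollary amounts to a two-line assembly.
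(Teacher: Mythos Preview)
Your proposal is correct and follows exactly the paper's approach: the paper derives the corollary by combining the Herzog--Krause bijection with the preceding Proposition~\ref{serreV}, and you do precisely this, invoking the relevant implications of Proposition~\ref{serreV} to restrict the bijection to the enriched classes.
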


Denote by ${}_{\cc A}\Zg$ (respectively $\Zg_{\cc A}$) the 
set of the isomorphism classes of indecomposable pure-injective modules of $\cc A\Mod$
(respectively $\Mod{\cc A}$). To an arbitrary $\cc V$-subcategory $\cc X$ of $\coh\ac$, we associate the subset of ${}_{\cc A}\Zg$,
   $$\cc O(\cc X)=\{Q\in{}_{\cc A}\Zg\mid\textrm{for some $C$ in $\cc X$}, [C,-\otimes_{\cc A}Q]\ne 0\}.$$
If $\cc X=\{C\}$ is a singleton, we write $\cc O(C)$ to denote $\cc O(\cc X)$. Thus $\cc O(\cc X)=\bigcup_{C\in\cc X}\cc O(C)$.
Denote by $\surd\cc X$  (respectively $\langle\cc X\rangle$) the smallest Serre subcategory
(respectively the smallest enriched Serre subcategory) of $\coh\ac$ containing $\cc X$.

Recall that an object $A\in\coh\ac$ is a {\it subquotient\/} of $B\in\coh\ac$, if there is a filtration of $B$ by coherent subobjects
$B=B_0\geq B_1\geq B_2\geq 0$ such that $A\cong B_1/B_2$.

\begin{lem}\label{subq}
$\langle\cc X\rangle=\surd(\cc G\oslash\cc X)$, where $\cc G\oslash\cc X=\{g\oslash X\mid g\in\cc G, X\in\cc X\}$.
\end{lem}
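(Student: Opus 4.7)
The plan is to prove the two inclusions separately. The forward inclusion $\surd(\cc G\oslash\cc X)\subseteq\langle\cc X\rangle$ is immediate: since $\langle\cc X\rangle$ is an enriched Serre subcategory, Proposition~\ref{serreV}(3) forces $g\oslash X\in\langle\cc X\rangle$ for all $g\in\cc G$ and $X\in\cc X$, and then the Serre property forces $\surd(\cc G\oslash\cc X)\subseteq\langle\cc X\rangle$.

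For the reverse inclusion I would show that $\surd(\cc G\oslash\cc X)$ is itself an enriched Serre subcategory of $\coh\ac$ containing $\cc X$. Containment of $\cc X$ is free from the standing assumption $e\in\cc G$, since each $X\in\cc X$ satisfies $X\cong e\oslash X\in\cc G\oslash\cc X$. By Proposition~\ref{serreV}(3) the remaining task is closure under $g\oslash-$ for every $g\in\cc G$, and I would handle this by the standard ``transporter'' trick: set
$$\cc T_g:=\{A\in\coh\ac\mid g\oslash A\in\surd(\cc G\oslash\cc X)\}$$
and show $\surd(\cc G\oslash\cc X)\subseteq\cc T_g$. That $\cc T_g$ contains $\cc G\oslash\cc X$ is immediate from $g\oslash(g'\oslash X)\cong(g\otimes g')\oslash X$ together with the closure of $\cc G$ under tensor products. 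That $\cc T_g$ is a Serre subcategory of $\coh\ac$ then reduces to showing that $g\oslash-$ is an exact endofunctor of $\coh\ac$.

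This last point is where the main technical content sits. Exactness of $g\oslash-$ on $\ac$ itself is pointwise, because $\ac$ is a $\cc V$-module with tensor computed objectwise (Lemma~\ref{bicomplete}) and $g\otimes-:\cc V\to\cc V$ is exact by Corollary~\ref{zabavnocor}. Preservation of coherence uses the identification $g\oslash[M,-]\cong[M\oslash g^\vee,-]$ with $M\oslash g^\vee\in\cc A\modd$ (via dualizability of $g$ and closure of $\cc G$ under duals and tensor products, exactly as in the proof of Theorem~\ref{vazhno}); since any $A\in\coh\ac$ admits a two-term epimorphism from finite sums of coherent generators $[M,-]$, applying the exact functor $g\oslash-$ produces a two-term epimorphism by coherent objects, hence $g\oslash A\in\coh\ac$. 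Once $g\oslash-$ is known to be an exact endofunctor of $\coh\ac$, the Serre axioms for $\cc T_g$ (closure under subobjects, quotients and extensions) drop out formally, giving $\surd(\cc G\oslash\cc X)\subseteq\cc T_g$ and hence the desired enrichment.
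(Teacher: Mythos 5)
Your proposal is correct and follows the same overall strategy as the paper: the forward inclusion is immediate from the definition, and the content is in showing that $\surd(\cc G\oslash\cc X)$ is itself an enriched Serre subcategory of $\coh\ac$ containing $\cc X$, with everything ultimately resting on the fact that $g\oslash-$ is an exact endofunctor of $\coh\ac$. Where you diverge is in the mechanism for the reverse inclusion: the paper invokes Herzog's explicit filtration description of $\surd(-)$ and applies $g\oslash-$ to the witnessing filtration, whereas you run the standard transporter argument with $\cc T_g=\{A\in\coh\ac\mid g\oslash A\in\surd(\cc G\oslash\cc X)\}$. The two are logically equivalent, but your route is more self-contained and it makes explicit two points the paper glosses over: that $g\oslash-$ actually preserves $\coh\ac$ (via $g\oslash[M,-]\cong[M\oslash g^\vee,-]$ together with two-term resolutions by the $[M,-]$), and that $\cc X\subseteq\surd(\cc G\oslash\cc X)$, which is needed to conclude $\langle\cc X\rangle\subseteq\surd(\cc G\oslash\cc X)$ by minimality. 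On that last point, one small caveat: $e\in\cc G$ is not quite a blanket assumption of the paper (Section~\ref{moduli} introduces it hedged as ``if we assume $e\in\cc G$''), and here one cannot simply enlarge $\cc G$ without changing $\cc G\oslash\cc X$. If you want to avoid the assumption, use instead the epimorphism $\oplus_{i=1}^{n}g_i\twoheadrightarrow e$ afforded by $e\in\fp(\cc V)$: tensoring with $X$ gives $\oplus_i g_i\oslash X\twoheadrightarrow X$, whence $X\in\surd(\cc G\oslash\cc X)$ --- precisely the argument the paper runs later in the proof of Theorem~\ref{zieglersp}(3).
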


\begin{proof}
Clearly, $\langle\cc X\rangle$ contains $\surd(\cc G\oslash\cc X)$.
By~\cite[Proposition~3.1]{Herzog} a coherent object $C\in\surd(\cc G\oslash\cc X)$ if and only if there are a finite filtration of $C$ by coherent subobjects
   $$C=C_0\supseteq C_1\supseteq\cdots\supseteq C_n= 0$$
and, for every $i<n$, $A_i\in\cc G\oslash\cc X$ such that $C_i/C_{i+1}$ is a subquotient of $A_i$. 
It follows from Corollary~\ref{zabavnocor} that the functor $g\oslash-:\coh\ac\to\coh\ac$ is exact. In particular, it respects filtrations as above. Therefore 
$\surd(\cc G\oslash\cc X)$ is an enriched Serre subcategory of $\coh\ac$ by Proposition~\ref{serreV}, and hence $\surd(\cc G\oslash\cc X)$ 
contains $\langle\cc X\rangle$.
\end{proof}

\begin{lem}\label{nuzhno}
If $A,B\in\coh\ac$ and $A$ is a subquotient of $B$, then 
$\cc O(A)\subset\cc O(B)$. If $0\to A\to B\to C\to 0$ is a short exact sequence in $\coh\ac$, 
then $\cc O(B)=\cc O(A)\cup\cc O(C)$.
\end{lem}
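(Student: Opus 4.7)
The plan is to translate everything into exactness of a single covariant functor. For $Q\in{}_{\cc A}\Zg$, set $E_Q:=-\otimes_{\cc A}Q$. By Corollary~\ref{pureinj}, $E_Q$ is an injective object of $\ac$, so Lemma~\ref{tochen} gives that the contravariant functor $[-,E_Q]:(\coh\ac)^{\op}\to\cc V$ is exact. The key reformulation is simply
\[
Q\in\cc O(X)\iff [X,E_Q]\ne 0,
\]
which reduces the lemma to tracking non-vanishing of $[-,E_Q]$ along short exact sequences of coherent objects.

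For the first assertion, fix a filtration $B=B_0\supseteq B_1\supseteq B_2\supseteq 0$ with $A\cong B_1/B_2$ and suppose $Q\in\cc O(A)$, i.e.\ $[A,E_Q]\ne 0$. Applying $[-,E_Q]$ to the short exact sequence $0\to B_2\to B_1\to A\to 0$ yields an exact sequence in $\cc V$
\[
0\to[A,E_Q]\to[B_1,E_Q]\to[B_2,E_Q]\to 0,
\]
so the monomorphism $[A,E_Q]\hookrightarrow[B_1,E_Q]$ forces $[B_1,E_Q]\ne 0$. Applying $[-,E_Q]$ to $0\to B_1\to B\to B/B_1\to 0$ yields an epimorphism $[B,E_Q]\twoheadrightarrow[B_1,E_Q]$, hence $[B,E_Q]\ne 0$, i.e.\ $Q\in\cc O(B)$. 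Thus $\cc O(A)\subset\cc O(B)$.

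For the second assertion, the inclusion $\cc O(A)\cup\cc O(C)\subset\cc O(B)$ is immediate from the first part, since both $A$ and $C$ are subquotients of $B$. Conversely, applying $[-,E_Q]$ to $0\to A\to B\to C\to 0$ yields an exact sequence
\[
0\to[C,E_Q]\to[B,E_Q]\to[A,E_Q]\to 0
\]
in $\cc V$. If $Q\notin\cc O(A)\cup\cc O(C)$ then $[A,E_Q]=[C,E_Q]=0$, which forces $[B,E_Q]=0$, contradicting $Q\in\cc O(B)$. Hence $\cc O(B)\subset\cc O(A)\cup\cc O(C)$.

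There is no substantial obstacle: the entire argument is a bookkeeping exercise once Lemma~\ref{tochen} is invoked to secure exactness of $[-,E_Q]$. The only mild point worth flagging is that non-vanishing in the Grothendieck category $\cc V$ is preserved under monomorphisms into and epimorphisms out of the object in question, which is used implicitly in both directions above.
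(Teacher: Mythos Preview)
Your proof is correct and follows exactly the approach the paper indicates: the paper's proof is a single sentence invoking Lemma~\ref{tochen} (exactness of $[-,E]$ for injective $E$), and you have simply unpacked that sentence into the explicit short exact sequence manipulations. The only addition you make is the reference to Corollary~\ref{pureinj} to justify that $-\otimes_{\cc A}Q$ is injective, which the paper leaves implicit.
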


\begin{proof}
This immediately follows from Lemma~\ref{tochen} saying that $[-,E]:\coh\ac\to\cc V$
is an exact functor for every injective object of $\ac$.
\end{proof}

\begin{cor}\label{skobki}
For any $\cc V$-subcategory $\cc X\subseteq\coh\ac$, $\cc O(\cc X)=\cc O(\langle\cc X\rangle)$.
\end{cor}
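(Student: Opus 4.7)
The inclusion $\cc O(\cc X)\subseteq\cc O(\langle\cc X\rangle)$ is immediate from $\cc X\subseteq\langle\cc X\rangle$ and the definition of $\cc O(-)$. The work is in the reverse inclusion $\cc O(\langle\cc X\rangle)\subseteq\cc O(\cc X)$.

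My plan is to combine Lemma~\ref{subq} with Lemma~\ref{nuzhno}. By Lemma~\ref{subq} we have $\langle\cc X\rangle=\surd(\cc G\oslash\cc X)$, so every object $C$ of $\langle\cc X\rangle$ admits, via Herzog~\cite[Proposition~3.1]{Herzog}, a finite filtration by coherent subobjects $C=C_0\supseteq C_1\supseteq\cdots\supseteq C_n=0$ such that each successive quotient $C_i/C_{i+1}$ is a subquotient of some $A_i=g_i\oslash X_i$ with $g_i\in\cc G$, $X_i\in\cc X$. Applying Lemma~\ref{nuzhno} to the short exact sequences $0\to C_{i+1}\to C_i\to C_i/C_{i+1}\to 0$ inductively yields
\[
\cc O(C)\subseteq\bigcup_{i=0}^{n-1}\cc O(C_i/C_{i+1})\subseteq\bigcup_{i=0}^{n-1}\cc O(g_i\oslash X_i).
\]

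Hence the whole reduction comes down to the single inclusion $\cc O(g\oslash X)\subseteq\cc O(X)$ for $g\in\cc G$ and $X\in\cc X$. This is where I would do the only real calculation: for any left $\cc A$-module $Q$ the tensor-cotensor adjunction in the closed $\cc V$-module $\ac$ gives a natural isomorphism in $\cc V$
\[
[g\oslash X,-\otimes_{\cc A}Q]\;\cong\;[g,[X,-\otimes_{\cc A}Q]].
\]
Consequently, if $[X,-\otimes_{\cc A}Q]=0$ then $[g\oslash X,-\otimes_{\cc A}Q]=[g,0]=0$. Taking contrapositives, $Q\in\cc O(g\oslash X)$ forces $Q\in\cc O(X)$, so $\cc O(g\oslash X)\subseteq\cc O(X)$.

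Putting the two steps together, $\cc O(C)\subseteq\bigcup_i\cc O(X_i)\subseteq\cc O(\cc X)$ for every $C\in\langle\cc X\rangle$, which gives the desired reverse inclusion. The only potential subtlety, and thus the step I would be most careful with, is the identification $[g\oslash X,Y]\cong[g,[X,Y]]$ in $\cc V$ at the level of the enriched internal Hom (rather than just the underlying Hom-sets); this is however a direct consequence of the closed $\cc V$-module structure on $\ac$ granted by Lemma~\ref{bicomplete}, together with the adjunctions in Definition~\ref{enri}.
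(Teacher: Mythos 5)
Your proof is correct and follows essentially the same outline as the paper's: both reduce to $\cc O(\cc G\oslash\cc X)$ via Lemma~\ref{subq} and Lemma~\ref{nuzhno}, and both then close the gap by showing $\cc O(g\oslash X)\subseteq\cc O(X)$. The only variation is in that last step: you argue entirely at the level of internal Hom-objects, using the closed $\cc V$-module adjunction $[g\oslash X,Y]\cong[g,[X,Y]]$ and the trivial fact that $[g,0]=0$. The paper instead drops to underlying Hom-sets: from $[g\oslash C,E]\ne0$ it picks $g'\in\cc G$ with $\Hom_{\cc V}(g',[g\oslash C,E])\ne0$, rewrites this via adjunction as $\Hom_{\cc V}(g'\otimes g,[C,E])\ne0$, and concludes $[C,E]\ne0$. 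Your version is slightly cleaner because it avoids invoking the generating property of $\cc G$ (and the auxiliary fact that $\cc G$ is closed under tensor products); the paper's version has the minor virtue of staying within the set-level Homs that the reader has been working with. Both are correct and the content is the same, namely the adjunction furnished by the closed $\cc V$-module structure on $\ac$ (Lemma~\ref{bicomplete}).
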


\begin{proof}
It follows from Lemma~\ref{nuzhno} and the proof of Lemma~\ref{subq} that
$\cc O(\cc G\oslash\cc X)=\cc O(\langle\cc X\rangle)$. Suppose $[g\oslash C,E]\ne 0$ for some
$g\in\cc G$ and $C\in\cc X$. As $\cc G$ is a family of generators, there is $g'\in\cc G$ such that
$\Hom_{\cc V}(g',[g\oslash C,E])\ne 0$. Then $\Hom_{\cc V}(g'\otimes g,[C,E])\ne 0$, and hence $[C,E]\ne 0$,
because $g'\otimes g\in\cc G$. We see that
$E\in\cc O(\cc X)$.
\end{proof}

\begin{lem}\label{nenol}
If $C,E\in\ac$, then $[C,E]\ne0$ if and only if there is $g\in\cc G$ such that $\Hom_{\ac}(g\oslash C,E)\ne0$.
\end{lem}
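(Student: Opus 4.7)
The plan is to use the fact that $\cc G$ is a family of generators of $\cc V$ together with the adjointness built into the closed $\cc V$-module structure on $\ac$ given by Lemma~\ref{bicomplete}. The entire argument is essentially a one-line chain of equivalences, so the only thing to organise carefully is which step invokes which piece of structure.

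First I would handle the easy direction. If $\Hom_{\ac}(g\oslash C,E)\ne 0$ for some $g\in\cc G$, then by the tensor-cotensor adjunction of Definition~\ref{enri} applied to $\ac$ (which is a closed $\cc V$-module by Lemma~\ref{bicomplete}), there is a natural isomorphism
\[
\Hom_{\ac}(g\oslash C,E)\cong\Hom_{\cc V}(g,[C,E]).
\]
A nonzero map $g\to[C,E]$ in $\cc V$ certainly forces $[C,E]\ne 0$.

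For the forward direction I would argue contrapositively using the generation property. Suppose $\Hom_{\ac}(g\oslash C,E)=0$ for every $g\in\cc G$. By the same adjunction this means $\Hom_{\cc V}(g,[C,E])=0$ for every $g\in\cc G$. Since $\cc G$ is a family of generators of $\cc V$, any nonzero morphism in $\cc V$ factors through some generator $g\in\cc G$; in particular, if $[C,E]$ were nonzero, then the identity morphism $[C,E]\to[C,E]$ is nonzero, and so some $g\in\cc G$ would admit a nonzero map to $[C,E]$, a contradiction. Hence $[C,E]=0$.

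There is no real obstacle here; the only thing to double-check is that the adjunction $\Hom_{\ac}(g\oslash C,E)\cong\Hom_{\cc V}(g,[C,E])$ is exactly the one provided by the closed $\cc V$-module structure on $\ac$, which follows from Lemma~\ref{bicomplete}. Once that is observed, the statement is immediate from the definition of a generating family. I would present the proof as the short two-line display of adjunction plus generation, without any additional setup.
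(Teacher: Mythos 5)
Your proof is correct and follows essentially the same route as the paper: both directions rest on the adjunction $\Hom_{\ac}(g\oslash C,E)\cong\Hom_{\cc V}(g,[C,E])$ coming from the closed $\cc V$-module structure, combined with the generating property of $\cc G$. The only cosmetic remark is that the phrase ``any nonzero morphism factors through some generator'' is slightly imprecise --- what the generator property gives is that a nonzero object admits a nonzero map from some $g\in\cc G$ --- but your actual application (to the identity of $[C,E]$) uses it correctly.
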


\begin{proof}
Suppose $[C,E]\ne0$. Since $\cc G$ is a family of generators of $\cc V$, there is $g\in\cc G$ such that
$0\ne\Hom_{\ac}(g,[C,E])\cong\Hom_{\ac}(g\oslash C,E)$. Conversely, if $\Hom_{\ac}(g\oslash C,E)\ne0$
for some $g\in\cc G$ then $[C,E]\ne0$.
\end{proof}

If $\vec{\cc S}$ is a localising subcategory of finite type in $\ac$, denote by $t_{\cc S}:\ac\to\vec{\cc S}$
the torsion functor associated to $\vec{\cc S}$. Without loss of generality (for this use Lemma~\ref{nenol}) 
we can restrict our discussion to 
subsets of the form
   $$\cc O(\cc S)=\{Q\in{}_{\cc A}\Zg\mid t_{\cc S}(-\otimes_{\cc A}Q)\ne0\},$$
where $\cc S$ is an enriched Serre subcategory of $\coh\ac$.

The following result is the enriched version of the celebrated Ziegler Theorem~\cite[Theore~4.9]{Ziegler} that
associates to a ring $R$ a topological space ${}_R\Zg$, which we refer to as the {\it Ziegler spectrum of the ring $R$}, 
whose points are the isomorphism classes of 
the pure-injective indecomposable left $R$-modules. This theorem has been treated by 
Herzog~\cite[Section~4]{Herzog} in terms of the category of generalised modules ${}_R\cc C=(\modd R,\Ab)$.
We also refer the reader to books by Prest~\cite{Prest,Prest2}.

\begin{thm}[Ziegler]\label{zieglersp}
The following statements are true:
\begin{enumerate}
\item The collection of subsets of ${}_{\cc A}\Zg$,
   $$\{\cc O(\cc S)\mid\cc S\subseteq\coh\ac\textrm{ is an enriched Serre subcategory}\},$$
satisfies the axioms for the open sets of a topology on ${}_{\cc A}\Zg$. This topological space is called the 
\emph{Ziegler spectrum of the enriched ringoid $\cc A$}.

\item The collection of open subsets $\{\cc O(C)\mid C\in\coh\ac\}$
satisfies the axioms for a basis of open subsets of the Ziegler spectrum. 
Furthermore, $\cc O(C)=\emptyset$ if and only if $C=0$.

\item An open subset $\cc O$ of ${}_{\cc A}\Zg$ is quasi-compact if and only if it is one of the basic 
open subsets $\cc O(C)$ with $C\in\coh\ac$.
\end{enumerate}
\end{thm}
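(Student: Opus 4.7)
The strategy is to follow Herzog's approach from~\cite{Herzog} for locally coherent Grothendieck categories, substituting enriched Serre subcategories where appropriate and exploiting the enriched Galois correspondence of Corollary~\ref{serreVcor1}. The key categorical inputs throughout are Corollary~\ref{pureinj}, which identifies indecomposable pure-injective $\cc A$-modules with indecomposable injective objects of $\ac$, and Krause's cogeneration theorem for locally coherent Grothendieck categories, which ensures that every nonzero coherent object admits a nonzero morphism to some indecomposable injective.

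For part~(1), $\emptyset=\cc O(0)$ trivially, and ${}_{\cc A}\Zg=\cc O(\coh\ac)$ follows by using local coherence of $\ac$ (Theorem~\ref{vazhno}) to find a nonzero coherent subobject of $-\otimes_{\cc A}Q$ for any $Q\in{}_{\cc A}\Zg$. Closure under arbitrary unions is immediate from Corollary~\ref{skobki}. The crux is finite intersection closure, where I claim $\cc O(\cc S_1)\cap\cc O(\cc S_2)=\cc O(\cc S_1\cap\cc S_2)$ (the intersection of enriched Serre subcategories being again enriched Serre by direct verification). The inclusion $\supseteq$ is trivial. For $\subseteq$, given $Q$ in the intersection, $E:=-\otimes_{\cc A}Q$ is an indecomposable injective by Corollary~\ref{pureinj} and therefore uniform, so the two nonzero subobjects $t_{\cc S_1}(E), t_{\cc S_2}(E)$ meet in a nonzero subobject that lies in $\vec{\cc S_1}\cap\vec{\cc S_2}=\vec{\cc S_1\cap\cc S_2}$ (the equality obtained from the characterisation of $\vec{\cc S}$ via its coherent subobjects, together with Corollary~\ref{serreVcor1}). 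Hence $t_{\cc S_1\cap\cc S_2}(E)\ne 0$.

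For part~(2), the identity $\cc O(\cc S)=\bigcup_{C\in\cc S}\cc O(C)$ is immediate from the definition, giving both the covering property and the decomposition of every open as a union of basics; the basis intersection axiom follows from the intersection formula of part~(1). For $\cc O(C)=\emptyset\Leftrightarrow C=0$, only $\Rightarrow$ requires work: given nonzero $C\in\coh\ac$, Krause's cogeneration yields an indecomposable injective $E$ with $\Hom_{\ac}(C,E)\ne 0$, hence $[C,E]\ne 0$, and Corollary~\ref{pureinj} supplies an indecomposable pure-injective $Q$ with $-\otimes_{\cc A}Q\cong E$, producing $Q\in\cc O(C)$. For part~(3), the easy direction extracts from $\cc O=\cc O(\cc S)=\bigcup_{C\in\cc S}\cc O(C)$ a finite subcover $\cc O(C_1)\cup\cdots\cup\cc O(C_n)=\cc O(C_1\oplus\cdots\oplus C_n)$. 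Conversely, suppose $\cc O(C)=\bigcup_i\cc O(\cc S_i)=\cc O(\cc T)$ with $\cc T=\langle\bigcup_i\cc S_i\rangle$. I first establish that $\cc S'\mapsto\cc O(\cc S')$ is order-reflecting on enriched Serre subcategories: if $C'\in\cc T_1\setminus\cc T_2$, then $C'/t_{\cc T_2}(C')$ is nonzero $\cc T_2$-torsionfree coherent; Krause's cogeneration yields an indecomposable injective $E$ with a nonzero morphism from it, and indecomposability forces $t_{\cc T_2}(E)\in\{0,E\}$, with the latter incompatible with the existence of the map, so $E$ is $\cc T_2$-torsionfree. Composing with $C'\twoheadrightarrow C'/t_{\cc T_2}(C')$ gives $Q\in\cc O(C')\subseteq\cc O(\cc T_1)$ but $Q\notin\cc O(\cc T_2)$. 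Applied to our situation this forces $C\in\cc T$, and since $\cc T$ is the directed union $\bigcup_{F\text{ finite}}\langle\bigcup_{i\in F}\cc S_i\rangle$ of enriched Serre subcategories, there is a finite $F$ with $C\in\langle\bigcup_{i\in F}\cc S_i\rangle$, producing the finite subcover $\bigcup_{i\in F}\cc O(\cc S_i)$.

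The main obstacle is the finite intersection closure in part~(1), which hinges on uniformity of indecomposable injectives together with the lattice compatibility of the enriched Serre/localising correspondence in Corollary~\ref{serreVcor1}. A close runner-up is the non-emptiness of $\cc O(C)$ for $C\ne 0$ in part~(2) and the resulting order-reflection used in part~(3), both of which rest on Krause's cogeneration theorem.
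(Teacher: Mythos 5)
Your overall structure mirrors the paper's proof, which cites Herzog's Theorems 3.4, 3.8, Corollary 3.5 and Proposition 3.1 after reducing enriched Serre subcategories to ordinary ones via Lemma~\ref{subq} and Corollary~\ref{skobki}. Parts~(1) and~(2) are fine, and the forward direction of~(3) is standard. However, there is a genuine gap in your argument for the converse of~(3), inside the order-reflection step.

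You assert that for an indecomposable injective $E$ ``indecomposability forces $t_{\cc T_2}(E)\in\{0,E\}$, with the latter incompatible with the existence of the map.'' This is false: for a uniform injective $E$, a nonzero torsion subobject $t_{\cc T_2}(E)$ is merely essential in $E$, not equal to $E$. Hereditary torsion theories arising from localizing subcategories of finite type need not be stable, so the injective hull of a torsion object is not automatically torsion, and having a nonzero map $C'/t_{\cc T_2}(C')\to E$ from a torsionfree object does not preclude $t_{\cc T_2}(E)\ne 0$ (the image only meets, rather than contains, the essential torsion subobject). The correct route --- and the one Herzog takes, which the paper invokes via~\cite[Theorem~3.8]{Herzog} --- is to pass to the quotient $\ac/\vec{\cc T_2}$, which remains locally coherent because $\vec{\cc T_2}$ is of finite type, localize the nonzero coherent object $C'/t_{\cc T_2}(C')$ there, and apply the cogeneration statement inside $\ac/\vec{\cc T_2}$. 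The resulting indecomposable injective is $\cc T_2$-closed, hence genuinely $\cc T_2$-torsionfree, and pulls back to an element of $\cc O(C')\setminus\cc O(\cc T_2)$ by adjunction. With that repair, the rest of your argument (the directed-union decomposition of $\langle\bigcup_i\cc S_i\rangle$ over finite subfamilies) goes through, and it is a serviceable alternative to the paper's reduction of $\langle\{C\}\rangle$ to $\surd(\cc G\oslash\{C\})$ followed by Herzog's filtration criterion (\cite[Proposition~3.1]{Herzog}).
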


\begin{proof}
$(1)$. Clearly, $\cc O(0)=\emptyset$ and $\cc O(\coh\ac)={}_{\cc A}\Zg$. By Corollary~\ref{skobki} one has
$\bigcup_{i\in I}\cc O(\cc S_i)=\cc O(\bigcup_{i\in I}\cc S_i)=\cc O(\langle\bigcup_{i\in I}\cc S_i\rangle)$.
The fact that $\cc O(\cc S_1)\cap\cc O(\cc S_2)=\cc O(\cc S_1\cap\cc S_2)$ literally repeats the proof 
of~\cite[Theorem~3.4]{Herzog} if we observe that the intersection of two enriched Serre subcategories
is an enriched Serre subcategory.

(2). The first part of this statement follows from the fact that $\cc O(\cc X)=\bigcup_{C\in\cc X}\cc O(C)$
for any $\cc V$-subcategory $\cc X$ of $\coh\ac$. Suppose $[C,-\otimes_{\cc A}Q]=0$ for any $Q\in{}_{\cc A}\Zg$.
It follows that $\Hom_{\ac}(e,[C,-\otimes_{\cc A}Q])=\Hom_{\ac}(C,-\otimes_{\cc A}Q)=0$. Then $C=0$
by~\cite[Corollary~3.5]{Herzog}.

(3). The proof is like that of~\cite[Corollary~3.9]{Herzog}. Suppose $\cc O$ is quasi-compact. By the previous 
statement $\cc O=\bigcup_{i\in I}\cc O(C_i)$. Then there is a finite subset $J$ of $I$ such that
$\cc O=\bigcup_{i\in J}\cc O(C_i)=\cc O(\sqcup_{i\in J} C_i)$. Conversely, suppose $\cc O(C)=\bigcup_{i\in I}\cc O(C_i)=
\cc O(\{C_i\mid i\in I\})=\cc O(\langle\{C_i\mid i\in I\}\rangle)=\{Q\in{}_{\cc A}\Zg\mid t_{\langle\{C_i\mid i\in I\}\rangle}(-\otimes_{\cc A}Q)\ne0\}
=\{Q\in{}_{\cc A}\Zg\mid t_{\langle\{C\}\rangle}(-\otimes_{\cc A}Q)\ne0\}$. 

By~\cite[Theorem~3.8]{Herzog} $\langle\{C\}\rangle=\langle\{C_i\mid i\in I\}\rangle$.
By Lemma~\ref{subq} 
$\langle\{C_i\mid i\in I\}\rangle=\surd(\cc G\oslash\{C_i\mid i\in I\})=\surd(\cc G\oslash\{C\})$.
Since $\cc G$ is a family of generators for $\cc V$ and $e\in\fp(\cc V)$, there is an epimorphism
$\oplus_{i=1}^ng_i\twoheadrightarrow e$ in $\cc V$. It induces an epimorphism $\oplus_{i=1}^ng_i\oslash C\twoheadrightarrow C$
in $\ac$. Since $\oplus_{i=1}^ng_i\oslash C\in\surd(\cc G\oslash\{C\})$,
one has $C\in\surd(\cc G\oslash\{C\})$.

We see that $C\in\surd(\cc G\oslash\{C_i\mid i\in I\})$. By~\cite[Proposition~3.1]{Herzog}
there are a finite filtration of $C$ by coherent subobjects
   $$C=D_0\supseteq D_1\supseteq\cdots\supseteq D_n= 0$$
and, for every $\ell<n$, $g_\ell\in\cc G$, $C_\ell\in\{C_i\mid i\in I\}$, such that $D_\ell/D_{\ell+1}$ is a subquotient of $g_\ell\oslash C_\ell$.
Since only finitely many of the $C_i$ are needed, there is a finite subset $J$ of $I$ such that
$C\in\surd(\cc G\oslash\{C_i\mid i\in J\})=\langle\cc G\oslash\{C_i\mid i\in J\}\rangle$. Using Lemma~\ref{nuzhno}, it follows that
$\cc O(C)\subset\bigcup_{i\in J}\cc O(C_i)$, and hence $\cc O(C)=\bigcup_{i\in J}\cc O(C_i)$.
\end{proof}

\section{Enriched Auslander--Gruson--Jensen Duality}

In his thesis~\cite{Sor} Sorokin constructed various bifunctors between enriched categories. As an application,
he gets an enriched version of the  Auslander--Gruson--Jensen Duality. 
In this section we treat this duality in our context and refer the reader to~\cite{Sor} for more general context.

Namely, define a $\cc V$-functor 
   $$D:(\coh\ac)^{\op}\to\coh\cc C_{\cc A}$$
by the rule
   $$D(C)(N):=[C,-\otimes_{\cc A}N],\quad C\in\coh\ac,\quad N\in\cc A\modd.$$
We have to check that $D(C)\in\coh\cc C_{\cc A}$. First note that for any $M\in\modd\cc A$ enriched
Yoneda's lemma implies
   $$D([M,-])(N)=[[M,-],-\otimes_{\cc A}N]\cong M\otimes_{\cc A}N.$$
Therefore $D([M,-])\cong M\otimes_{\cc A}-$. Given $C\in\coh\ac$, there is an exact sequence
   $$[L,-]\to[M,-]\to C\to 0$$
in $\coh\ac$.
As $-\otimes_{\cc A}N$ is $\underline{\coh}$-injective by Theorem~\ref{cohinjobj}, $D$ is exact and takes the
exact sequence to an exact sequence in $\coh\cc C_{\cc A}$
   $$0\to D(C)\to M\otimes_{\cc A}-\to L\otimes_{\cc A}-.$$
We see that $D(C)\in\coh\cc C_{\cc A}$ as claimed.

By construction, $D(-\otimes_{\cc A}M)(N)=[-\otimes_{\cc A}M,-\otimes_{\cc A}N]$. it follows from
Proposition~\ref{tensorN} that $D(-\otimes_{\cc A}M)\cong[M,-]$. By symmetry, consider 
      $$D':(\coh\cc C_{\cc A})^{\op}\to\coh\ac$$
defined similarly to $D$. Then $D'D$ and $DD'$ are exact functors respecting functors of the form $[M,-]$
(up to isomorphism). Therefore $D$ and $D'$ are mutually inverse $\cc V$-equivalences between $\cc V$-categories.

Thus we have proven the following result (cf.~\cite[Theorem~5.1]{Herzog}).

\begin{thm}[Auslander~\cite{Au2}, Gruson and Jensen~\cite{GJ}]\label{agj}
The $\cc V$-functor $D:(\coh\ac)^{\op}\to\coh\cc C_{\cc A}$ defined above 
puts the $\cc V$-categories $\coh\ac$ and $\coh\cc C_{\cc A}$ in duality. Moreover, 
if $M\in\modd\cc A$ and $N\in\cc A\modd$, we have that
$D([M,-])\cong M\otimes_{\cc A}-$ and $D(-\otimes_{\cc A}N)\cong[N,-]$.
\end{thm}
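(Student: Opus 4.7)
The plan is to set up $D$ explicitly, verify it lands in $\coh\cc C_{\cc A}$ as an exact $\cc V$-functor, and then produce a symmetric partner $D'$ that serves as its inverse. First I would define $D(C)(N):=[C,-\otimes_{\cc A}N]$ for $C\in\coh\ac$ and $N\in\cc A\modd$, with $\cc V$-functoriality in $N$ inherited from Proposition~\ref{tensorN} (which makes $N\mapsto-\otimes_{\cc A}N$ a $\cc V$-functor $\cc A\Mod\to\ac$) composed with the internal Hom $[C,-]:\ac\to\cc V$; contravariant $\cc V$-functoriality in $C$ is analogous.

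Next I would compute $D$ on the coherent generators of $\coh\ac$. The enriched Yoneda Lemma~\ref{enryon} gives, for $M\in\modd\cc A$,
$$D([M,-])(N)=[[M,-],-\otimes_{\cc A}N]\cong(-\otimes_{\cc A}N)(M)=M\otimes_{\cc A}N,$$
so $D([M,-])\cong M\otimes_{\cc A}-$. For an arbitrary $C\in\coh\ac$ presented as $[L,-]\to[M,-]\to C\to 0$, the $\underline{\coh}$-injectivity of $-\otimes_{\cc A}N$ established in Theorem~\ref{cohinjobj} makes $[-,-\otimes_{\cc A}N]$ exact on $\coh\ac$; hence $D$ is exact and produces an exact sequence $0\to D(C)\to M\otimes_{\cc A}-\to L\otimes_{\cc A}-$ in $\cc C_{\cc A}$. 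This confirms $D(C)\in\coh\cc C_{\cc A}$.

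For the inverse, I would define $D':(\coh\cc C_{\cc A})^{\op}\to\coh\ac$ symmetrically by $D'(X)(M):=[X,M\otimes_{\cc A}-]$, using the right-hand analogue of Theorem~\ref{cohinjobj} and repeating the arguments above. Using the $\cc V$-full faithfulness of $N\mapsto-\otimes_{\cc A}N$ from Proposition~\ref{tensorN}, I would compute
$$D(-\otimes_{\cc A}N)(N')=[-\otimes_{\cc A}N,-\otimes_{\cc A}N']\cong[N,N']=[N,-](N'),$$
hence $D(-\otimes_{\cc A}N)\cong[N,-]$ and dually $D'([N,-])\cong-\otimes_{\cc A}N$. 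Thus $D'D$ agrees with the identity on all representables $[M,-]$, and since $D'D$ is exact while these representables generate $\coh\ac$, a standard presentation-by-presentation argument upgrades this to a natural $\cc V$-isomorphism $D'D\cong\id$. The verification $DD'\cong\id$ is symmetric, establishing the duality.

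The step I expect to cause the most technical friction is the rigorous check that $D$ is genuinely a $\cc V$-functor (not merely an ordinary functor of underlying categories): one must assemble morphisms $[C,C']\to[D(C'),D(C)]$ in $\cc V$ satisfying the composition and unit axioms of Definition~\ref{functor}. This reduces to tracing through the enriched bifunctoriality of the internal Hom $[-,-]$ together with the $\cc V$-functoriality of $-\otimes_{\cc A}N$, and is essentially formal bookkeeping once one has the explicit formulas and Corollary~\ref{good} in hand.
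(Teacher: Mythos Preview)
Your proposal is correct and follows essentially the same argument as the paper: define $D(C)(N)=[C,-\otimes_{\cc A}N]$, compute $D([M,-])\cong M\otimes_{\cc A}-$ via enriched Yoneda, use the $\underline{\coh}$-injectivity of $-\otimes_{\cc A}N$ from Theorem~\ref{cohinjobj} to get exactness of $D$ and coherence of $D(C)$, then construct the symmetric $D'$ and conclude $D'D\cong\id$, $DD'\cong\id$ from exactness and agreement on representables (using Proposition~\ref{tensorN} for $D(-\otimes_{\cc A}N)\cong[N,-]$). The only addition is your explicit mention of the $\cc V$-functoriality bookkeeping, which the paper treats as implicit.
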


An object $X$ of an Abelian $\cc V$-category $\cc C$ is said to be {\it internally projective\/}
(respectively {\it internally injective}) if the functor $[X,-]:\cc C\to\cc V$ (respectively $[-,X]:\cc C\to\cc V^{\op}$) is exact.
By construction, $[M,-]$ is internally projective for $\ac$.
As every $C\in\coh\ac$ is covered by an object of the form $[M,-]$, the Abelian $\cc V$-category
$\coh\ac$ has enough internally projective objects. The following statement follows from Theorems~\ref{agj} and~\ref{cohinjobj}.

\begin{cor}\label{injcoh}
Every internally injective object of $\coh\ac$ is isomorphic to one of the $\cc V$-functors 
$-\otimes_{\cc A}N$, where $N\in\cc A\modd$. The Abelian $\cc V$-category $\coh\ac$ has enough internally  injective objects, 
that is, for every $C\in\coh\ac$, there is a monomorphism $\iota:C\hookrightarrow-\otimes_{\cc A}N$ 
with $N\in\cc A\modd$.
\end{cor}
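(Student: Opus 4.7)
The plan is to prove both assertions by combining the Auslander--Gruson--Jensen duality (Theorem~\ref{agj}) with the characterisation of $\underline{\coh}$-injective objects (Theorem~\ref{cohinjobj}). For the first assertion, observe that a coherent $C\in\coh\ac$ is internally injective by definition precisely when $[-,C]:\coh\ac\to\cc V^{\op}$ is exact, i.e.\ exactly when $C$ is $\underline{\coh}$-injective. Theorem~\ref{cohinjobj}(3) then forces $C\cong-\otimes_{\cc A}N$ for some $N\in\cc A\Mod$, and the only remaining issue is to upgrade $N$ to $\cc A\modd$. I would exploit the $\cc V$-fully faithful embedding $r_L:N\mapsto-\otimes_{\cc A}N$ of Proposition~\ref{tensorN}, which is a left adjoint by Theorem~\ref{recoll} and therefore preserves all colimits. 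For any directed system $\{X_i\}_{i\in I}$ in $\cc A\Mod$ one then obtains natural isomorphisms
$$[N,\lp_I X_i]\cong\bigl[-\otimes_{\cc A}N,\lp_I(-\otimes_{\cc A}X_i)\bigr]\cong\lp_I[-\otimes_{\cc A}N,-\otimes_{\cc A}X_i]\cong\lp_I[N,X_i],$$
the middle step using that $-\otimes_{\cc A}N\cong C$ is finitely presented in $\ac$ together with Lemma~\ref{fprV}. Applying Lemma~\ref{fprV} once more, this time in $\cc A\Mod$, yields $N\in\cc A\modd$.

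For the second assertion, I would translate ``enough internally injectives in $\coh\ac$'' through the duality into the easier statement ``enough internally projectives in $\coh\cc C_{\cc A}$''. By Theorem~\ref{vazhno}, $\coh\cc C_{\cc A}$ has the family of coherent generators $\{[N,-]\mid N\in\cc A\modd\}$, each of which is visibly internally projective since $[[N,-],-]$ is evaluation at $N$. Given $C\in\coh\ac$, the object $D(C)\in\coh\cc C_{\cc A}$ is finitely generated, hence admits an epimorphism $\bigoplus_{i=1}^k[N_i,-]\twoheadrightarrow D(C)$ with $N_i\in\cc A\modd$. Setting $N:=\bigoplus_iN_i\in\cc A\modd$ and using additivity to rewrite $\bigoplus_i[N_i,-]\cong[N,-]$, this becomes an epimorphism $[N,-]\twoheadrightarrow D(C)$. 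Applying the quasi-inverse of $D$, which is exact and reverses arrows, and invoking the identification $D(-\otimes_{\cc A}N)\cong[N,-]$ from Theorem~\ref{agj}, one obtains the desired monomorphism $\iota:C\hookrightarrow-\otimes_{\cc A}N$ in $\coh\ac$. Finally, $-\otimes_{\cc A}N$ is $\underline{\coh}$-injective by Theorem~\ref{cohinjobj}, hence internally injective, as required.

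The principal obstacle is the finite-presentation upgrade in the first assertion: one has to bridge finite presentation in $\ac$ and in $\cc A\Mod$ across a functor that a priori lives between categories with different classes of generators. The enriched compatibility built into Proposition~\ref{tensorN} and the internal criterion of Lemma~\ref{fprV} are exactly what make this bridge work; the rest of the argument is a formal dualisation.
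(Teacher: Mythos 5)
Your proposal is correct and follows essentially the route the paper intends: the paper's one-line justification ``follows from Theorems~\ref{agj} and \ref{cohinjobj}'' encodes exactly the combination of the $\underline{\coh}$-injective characterisation with the Auslander--Gruson--Jensen duality that you carry out, and the ``enough internally injectives'' half mirrors the remark immediately preceding the corollary (enough internally projectives in $\coh\cc C_{\cc A}$ given by the $[N,-]$'s) transported through $D'$. The one place you supply a genuinely new argument is the finite-presentation upgrade of $N$ from $\cc A\Mod$ to $\cc A\modd$, which you obtain by pushing the colimit-commutation criterion of Lemma~\ref{fprV} across the $\cc V$-fully faithful colimit-preserving embedding $r_L$; the paper leaves this implicit, and one could alternatively note that $-\otimes_{\cc A}N$ finitely presented makes it a retract of some $-\otimes_{\cc A}N_j$ with $N_j\in\cc A\modd$ and conclude by full faithfulness and closure of $\fp$ under retracts, but your version is equally valid and arguably cleaner.
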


Given an enriched Serre subcategory $\cc S\subseteq\coh\ac$, the full $\cc V$-subcategory
$D\cc S=\{DC\mid C\in\cc S\}$ is plainly Serre. If $g\in\cc G$, $C\in\coh\ac$ and $N\in\cc A\modd$, then
   $$g\oslash D(C)(N)=g\oslash[C,-\otimes_{\cc A}N]\cong[g^\vee\oslash C,-\otimes_{\cc A}N]=D(g^\vee\oslash C)(N).$$
Since $g^\vee\oslash C\in\cc S$, it follows that $g\oslash D(C)\in D\cc S$, and hence $D\cc S$ is an enriched
Serre subcategory of $\coh\cc C_{\cc A}$ by Proposition~\ref{serreV}.

We arrive at an enriched version of a theorem of Herzog~\cite[Theorem~5.5]{Herzog} (the proof of the
second half of the theorem literally repeats arguments given in~\cite[p.~536]{Herzog}).

\begin{thm}[Herzog]\label{dualserre}
There is an inclusion-preserving bijective correspondence between the enriched 
Serre subcategories of $\coh\ac$ and those of $\coh\cc C_{\cc A}$ given by
   $$\cc S\mapsto D\cc S.$$ 
The induced map $\cc O(\cc S)\mapsto\cc O(D\cc S)$ is an isomorphism 
between the topologies, that is, the respective algebras of open sets, of the left and right 
Ziegler spectra of $\cc A$. Furthermore, the duality $D:(\coh\ac)^{\op}\to\coh\cc C_{\cc A}$ 
induces dualities between respective $\cc V$-subcategories 
$D:\cc S^{\op}\to D\cc S$ and $D:(\coh\ac/\vec{\cc S})^{\op}\to\coh\cc C_{\cc A}/\vec{D\cc S}$ as given 
by the following commutative diagram of Abelian $\cc V$-categories:
   $$\xymatrix{0\ar[r]&\cc S\ar[r]\ar[d]_D&\coh\ac\ar[r]\ar[d]_D&\coh(\ac/\vec{\cc S})\ar[r]\ar[d]^D&0\\
                       0\ar[r]&D\cc S\ar[r]&\coh\cc C_{\cc A}\ar[r]&\coh(\cc C_{\cc A}/\vec{D\cc S})\ar[r]&0}$$
\end{thm}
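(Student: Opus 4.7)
The plan is to handle the three assertions in the theorem in sequence, leveraging the duality $D$ from Theorem~\ref{agj} together with the enriched Serre/localising machinery of Proposition~\ref{serreV} and Corollary~\ref{serreVcor1}.

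\emph{Bijective correspondence.} The paragraph immediately preceding the theorem already shows that if $\cc S\subseteq\coh\ac$ is an enriched Serre subcategory, then $D\cc S\subseteq\coh\cc C_{\cc A}$ is closed under tensoring with generators $g\in\cc G$, hence enriched by Proposition~\ref{serreV}. By symmetry, $D'$ sends enriched Serre subcategories of $\coh\cc C_{\cc A}$ to enriched Serre subcategories of $\coh\ac$. The relations $DD'\cong\id$ and $D'D\cong\id$ from Theorem~\ref{agj} then force $\cc S\mapsto D\cc S$ and $\cc T\mapsto D'\cc T$ to be mutually inverse, and inclusion-preservation is immediate from functoriality. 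This disposes of the first clause and establishes the vertical duality $D:\cc S^{\op}\xrightarrow{\sim}D\cc S$ by restriction.

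\emph{Isomorphism of the topologies.} The key intermediate step is to identify the lattice of open subsets of ${}_{\cc A}\Zg$ with the lattice of enriched Serre subcategories of $\coh\ac$. Surjectivity of $\cc S\mapsto\cc O(\cc S)$ onto the topology follows from Theorem~\ref{zieglersp}(2) together with Corollary~\ref{skobki}, which rewrites any basic open $\cc O(C)$ as $\cc O(\langle C\rangle)$; compatibility with unions and finite intersections is contained in the proof of Theorem~\ref{zieglersp}(1). For injectivity, if $\cc O(\cc S_1)=\cc O(\cc S_2)$ and $C\in\cc S_1$, then for every $Q\in{}_{\cc A}\Zg$ with $[C,-\otimes_{\cc A}Q]\ne 0$ we must have some $C'\in\cc S_2$ with $[C',-\otimes_{\cc A}Q]\ne 0$; using Lemma~\ref{nenol}, Corollary~\ref{injcoh} and the existence of sufficiently many indecomposable injectives in $\ac$, one deduces $C\in\vec{\cc S_2}\cap\coh\ac=\cc S_2$ exactly as in~\cite[Theorem~5.5]{Herzog}. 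Once this bijection is in hand, the assignment $\cc O(\cc S)\mapsto\cc O(D\cc S)$ is well-defined and is an isomorphism of lattices, because both topologies are canonically identified with the common lattice of enriched Serre subcategories via the first clause.

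\emph{Induced duality on quotients.} By the Herzog--Krause theorem, applied to the locally coherent $\cc V$-categories $\ac$ and $\cc C_{\cc A}$, the quotient $\coh(\ac/\vec{\cc S})$ is canonically identified with $\coh\ac/\cc S$ and similarly on the right. Because $D$ is an exact $\cc V$-equivalence sending $\cc S$ to $D\cc S$, the universal property of Serre quotients produces a unique exact $\cc V$-functor $\bar D:(\coh\ac/\cc S)^{\op}\to\coh\cc C_{\cc A}/D\cc S$ fitting into the displayed commutative diagram, and it is an equivalence because $D$ is. The three equivalences $D$, $D|_{\cc S}$ and $\bar D$ together yield the claimed commutative diagram of Abelian $\cc V$-categories.

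\emph{Main obstacle.} The genuine content lies in the injectivity of $\cc S\mapsto\cc O(\cc S)$, i.e.\ the enriched analogue of Herzog's reconstruction of an enriched Serre subcategory of $\coh\ac$ from its trace on indecomposable pure-injective left $\cc A$-modules. All other clauses are formal consequences of the duality $D$, Proposition~\ref{serreV}, and the standard Herzog--Krause correspondence, but this step genuinely uses the enriched structure through Lemma~\ref{nenol} and the description of injectives in $\ac$ given by Corollary~\ref{pureinj}.
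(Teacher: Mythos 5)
Your proposal is correct and follows essentially the same route as the paper, which establishes that $D\cc S$ is enriched in the paragraph preceding the theorem and then explicitly defers to Herzog's argument in \cite[p.~536]{Herzog} for the remainder. Your identification of the injectivity of $\cc S\mapsto\cc O(\cc S)$ as the genuine content is sound (and in fact it reduces to the unenriched Herzog--Krause bijection, since by Lemma~\ref{nenol} an enriched $\cc S$ determines the same subset of the spectrum whether one uses $[C,-\otimes_{\cc A}Q]$ or ordinary $\Hom$), and your treatment of the quotient row via the universal property of Serre quotients matches the cited Herzog argument.
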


We have collected all the necessary facts about the Ziegler spectrum of an enriched ringoid
and (coherent) generalised $\cc A$-modules in oder to pass to applications.

\section{The injective spectrum of an enriched ringoid}\label{sectioninj}

Recall that a localizing subcategory $\cc S$ of a Grothendieck
category $\cc C$ is {\it of finite type\/} (respectively {\it of
strictly finite type}) if the functor $i:\cc C/\cc S\to\cc C$
preserves directed sums (respectively direct limits). If $\cc C$
is a locally finitely generated (respectively, locally finitely
presented) Grothen\-dieck category and $\cc S$ is of finite type
(respectively, of strictly finite type), then $\cc C/\cc S$ is a
locally finitely generated (respectively, locally finitely
presented) Grothendieck category and
   $$\fg(\cc C/\cc S)=\{C_{\cc S}\mid C\in\fg\cc C\}\quad\textrm{(respectively
     $\fp(\cc C/\cc S)=\{C_{\cc S}\mid C\in\fp\cc C\}$)}.$$
If $\cc C$ is a locally coherent Grothendieck category then $\cc
S$ is of finite type \ifff it is of strictly finite type (see,
e.g.,~\cite[Theorem~5.14]{GG}). In this case $\cc C/\cc S$ is locally
coherent.

Suppose $\cc C$ is a locally finitely presented Grothendieck
category. Denote by $\Sp\cc C$ the set of the isomorphism classes of indecomposable injective objects in $\cc C$.
The Ziegler topology defined for locally coherent categories in~\cite{Herzog, Krause}
was extended to $\Sp\cc C$ in~\cite[Theorem~11]{GG3}. In detail,
the collection of open subsets of $\Sp\cc C$ is given by
   $$\{O(\cc S)\mid\cc S\subset\cc C \textrm{ is a localizing subcategory of finite type}\},$$
where
   $$O(\cc S)=\{E\in\Sp\cc C\mid t_{\cc S}(E)\neq 0\}.$$
   
Following these results, denote by $\Sp\cc A$, where $\cc A$ is an enriched ringoid, 
the set of the isomorphism classes of indecomposable injective objects in $\cc A\Mod$.
Given an enriched localizing subcategory of finite type $\cc S\subseteq\cc A\Mod$, we set
   $$\cc O(\cc S)=\{E\in\Sp\cc A\mid t_{\cc S}(E)\neq 0\}.$$
Similarly to Lemma~\ref{nenol} and the fact that $\cc S$ is  closed under tensor products with objects from $\cc V$,
one gets that
   $$\cc O(\cc S)=\{E\in\Sp\cc A\mid [{\cc S},E]\neq 0\}.$$
   
\begin{thm}[after~\cite{GG3}]\label{injsp}
The collection of subsets of $\Sp\cc A$,
   $$\{\cc O(\cc S)\mid\cc S\subset\cc A\Mod \textrm{ is an enriched localizing subcategory of finite type}\},$$
satisfies the axioms for the open sets of a topology on the
injective spectrum $\Sp\cc A$. Moreover,
   \begin{equation}\label{111}
    \cc S\longmapsto\cc O(\cc S)
   \end{equation}
is an inclusion-preserving bijection between the enriched localizing
$\cc V$-subcategories $\cc S$ of finite type in $\cc A\Mod$ and the open subsets
of $\Sp\cc A$.
\end{thm}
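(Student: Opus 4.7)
The plan is to follow the strategy of~\cite[Theorem~11]{GG3}, which established the analogous statement for arbitrary locally finitely presented Grothendieck categories, and to overlay the enriched bookkeeping using Proposition~\ref{serreV} and Corollary~\ref{serreVcor1}. Since $\cc A\Mod$ is locally finitely presented by Lemma~\ref{fpr}, the underlying non-enriched result applies; what must be added is that every operation encountered below (joins, intersections, passage to the Serre-type subcategories of $\fp(\cc A\Mod)$ used in~\cite{GG3}) preserves the property of being enriched. By Proposition~\ref{serreV} this reduces to preservation under tensoring with the dualizable generators $g\in\cc G$, a condition stable under all the relevant set-theoretic operations.

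First I would verify the topology axioms. The equalities $\cc O(0)=\emptyset$ and $\cc O(\cc A\Mod)=\Sp\cc A$ are immediate. For unions, given enriched localizing subcategories of finite type $\{\cc S_i\}_{i\in I}$, let $\cc T$ be the smallest localizing subcategory of finite type in $\cc A\Mod$ containing every $\cc S_i$; this exists via the~\cite{GG3} correspondence with the appropriate subcategories of $\fp(\cc A\Mod)$, and it is enriched because the join of subcategories closed under $g\oslash-$ is again closed under $g\oslash-$. The identity $\bigcup_i\cc O(\cc S_i)=\cc O(\cc T)$ then follows from the observation that $E\in\cc O(\cc T)$ if and only if $[T,E]\ne 0$ for some $T\in\cc T$, which by cofinality reduces to $[T_i,E]\ne 0$ for some $T_i\in\cc S_i$. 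For finite intersections, $\cc S_1\cap\cc S_2$ is plainly enriched and of finite type, and an indecomposable injective $E$ is $(\cc S_1\cap\cc S_2)$-torsion if and only if it is torsion for both $\cc S_i$, since for indecomposable injective $E\in\cc O(\cc S_i)$ one has $t_{\cc S_i}(E)=E$.

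For the asserted bijection, surjectivity of $\cc S\mapsto\cc O(\cc S)$ onto the specified collection of opens is definitional; only injectivity requires argument. Suppose $\cc O(\cc S)=\cc O(\cc T)$ for two enriched localizing subcategories of finite type. The standard argument recovers $\cc S$ from $\cc O(\cc S)$ as the class of $M\in\cc A\Mod$ such that every indecomposable injective summand of the injective envelope of each non-zero subquotient of $M$ belongs to $\cc O(\cc S)$; this uses closure of $\cc S$ under subobjects, quotients and essential extensions, together with the fact that every non-zero $\cc A$-module has a non-zero indecomposable injective envelope component (as in the proof of~\cite[Theorem~11]{GG3}). Hence $\cc O(\cc S)=\cc O(\cc T)$ forces $\cc S=\cc T$. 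The main obstacle is precisely this last step: $\cc A\Mod$ need not be locally coherent, so finitely presented objects are not closed under subobjects and the classical Herzog--Krause correspondence is unavailable; one must rely instead on the locally-finitely-presented correspondence of~\cite{GG3} and verify that the enriched condition transfers correctly between both sides, which is handled uniformly by Proposition~\ref{serreV}(3).
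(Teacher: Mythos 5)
There is a genuine gap at the crucial step of the argument. Your proof asserts that the join $\cc T=\surd\bigcup_{i\in I}\cc S_i$ is enriched "because the join of subcategories closed under $g\oslash-$ is again closed under $g\oslash-$", but this is not a formal fact: $\surd$ is a nontrivial closure operation (taking the smallest localizing subcategory of finite type containing a class), and there is no a priori reason why this closure preserves the property of being stable under $g\oslash-$. This is precisely the content of the paper's proof. The paper establishes it by passing to torsionfree injective objects: from~\cite[Theorem~11]{GG3}, an injective $E$ is $\surd\bigcup\cc S_i$-torsionfree iff it is $\cc S_i$-torsionfree for all $i$; then Lemma~\ref{gE} ensures $[g,E]$ is again injective, and since each $\cc S_i$ is enriched, $[g,E]$ is $\cc S_i$-torsionfree whenever $E$ is; hence $[g,E]$ is $\surd\bigcup\cc S_i$-torsionfree and therefore $g\oslash S\in\surd\bigcup\cc S_i$ for every $S\in\surd\bigcup\cc S_i$. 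Without this argument (or an equivalent), the topology axiom for arbitrary unions is not verified in the enriched setting.

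Two further points. First, you appeal to Proposition~\ref{serreV}(3) and Corollary~\ref{serreVcor1}, but both are stated for the locally coherent category $\ac$; $\cc A\Mod$ is only locally finitely presented in general, so those results do not apply directly and you would have to reprove an analogue using the $\fp(\cc A\Mod)$-side correspondence of~\cite{GG3}, which is not carried out. Second, in the finite-intersection step your claim "$t_{\cc S_i}(E)=E$ for $E\in\cc O(\cc S_i)$" is false; one only has $t_{\cc S_i}(E)\neq 0$, and the correct argument uses uniformity of the indecomposable injective $E$ to conclude $t_{\cc S_1}(E)\cap t_{\cc S_2}(E)\neq 0$ and hence $t_{\cc S_1\cap\cc S_2}(E)\neq 0$. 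Your direct recovery argument for injectivity of $\cc S\mapsto\cc O(\cc S)$ is a reasonable alternative to the paper's simpler deduction from~\cite[Theorem~11]{GG3}, but it is moot until the union axiom is properly established.
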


\begin{proof}
Note that $\cc O(0)=\emptyset$ and $\cc O(\cc
A\Mod)=\Sp\cc A$. The proof of~\cite[Theorem~11]{GG3} shows that 
$\cc O(\cc S_1)\cap\cc O(\cc S_2)=\cc O(\cc S_1\cap\cc
S_2)$. This is due to the fact that $\cc S_1\cap\cc S_2$ is
enriched of finite type. The proof also shows that
$\surd\cup_{i\in I}\cc S_i$ is of finite type
with each $\cc S_i$ enriched of finite type. 

Suppose $E\in\cc A\Mod$ is injective. It follows 
from~\cite[Theorem~11]{GG3} that $t_{\surd\cup_{i\in I}\cc S_i}(E)=0$
if and only if $t_{\cc S_i}(E)=0$ for all $i\in I$. By Lemma~\ref{gE} $[g,E]$
is injective for all $g\in\cc G$. Since each $\cc S_i$ is enriched,
$[g,E]$ is $\cc S_i$-torsionfree if $E$ is $\cc S_i$-torsionfree.
So $g\oslash S\in\surd\cup_{i\in I}\cc S_i$ for all $g\in\cc G$
and $S\in\surd\cup_{i\in I}\cc S_i$. Since every $V\in\cc V$ is covered by
$\oplus_Ig_i$ with each $g_i\in\cc G$, we see that $V\oslash S$ also belongs to
$\surd\cup_{i\in I}\cc S_i$. We conclude that $\surd\cup_{i\in I}\cc S_i$ is enriched.
It follows from~\cite[Theorem~11]{GG3} that $\cup_{i\in I}\cc O(\cc S_i)=\cc O(\cup_{i\in
I}\cc S_i)=\cc O(\surd\cup_{i\in I}\cc S_i)$ and that the map~\eqref{111} is
bijective.
\end{proof}

The proof of the preceding theorem also implies the following statement.

\begin{cor}\label{priyatno}
A localizing subcategory $\cc S$ of $\cc A\Mod$ is enriched if and only if
for any $\cc S$-closed injective $\cc A$-module $E$ and any $g\in\cc G$
the injective $\cc A$-module $[g,E]$ is $\cc S$-closed.
\end{cor}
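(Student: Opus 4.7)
The plan is to handle the two directions separately. The ``only if'' direction is essentially a special case of Proposition~\ref{svoistva}(1) (transferred to left modules, for which the argument is identical): if $\cc S$ is enriched and $E$ is $\cc S$-closed, then $[V,E]$ is $\cc S$-closed for every $V\in\cc V$; specialising to $V=g\in\cc G$ and $E$ injective gives precisely the claim.

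For the ``if'' direction, the goal is to verify $V\oslash S\in\cc S$ for all $S\in\cc S$ and $V\in\cc V$. I would bootstrap from $\cc G$ to all of $\cc V$ in two steps. First, for $g\in\cc G$ and $S\in\cc S$, the adjunction $-\oslash g\dashv[g,-]$ coming from the closed $\cc V$-module structure on $\cc A\Mod$ (Lemma~\ref{bicomplete}) gives
$$\Hom_{\cc A}(g\oslash S,E)\cong\Hom_{\cc A}(S,[g,E])$$
for any $\cc A$-module $E$. When $E$ is a $\cc S$-closed injective, the hypothesis says $[g,E]$ is $\cc S$-closed, hence $\cc S$-torsionfree, so the right-hand side vanishes because $S\in\cc S$. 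Since $\cc S$-closed injectives detect $\cc S$-torsion (an object $X$ lies in $\cc S$ iff $\Hom_{\cc A}(X,E)=0$ for every such $E$, which follows from the fact that $X/t_{\cc S}(X)$ embeds into a $\cc S$-torsionfree injective envelope), this yields $g\oslash S\in\cc S$.

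Second, for an arbitrary $V\in\cc V$, pick an epimorphism $\bigoplus_{i\in I}g_i\twoheadrightarrow V$ with $g_i\in\cc G$, possible because $\cc G$ generates $\cc V$. The functor $S\oslash-:\cc V\to\cc A\Mod$ is left $\cc V$-adjoint to $[S,-]:\cc A\Mod\to\cc V$ (again by Lemma~\ref{bicomplete}), hence preserves coproducts and epimorphisms, producing an epimorphism $\bigoplus_{i\in I}(g_i\oslash S)\twoheadrightarrow V\oslash S$ in $\cc A\Mod$. Each $g_i\oslash S$ lies in $\cc S$ by the first step, so $V\oslash S\in\cc S$ by closure of $\cc S$ under coproducts and quotients.

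I do not anticipate a genuine obstacle here; the only point demanding care is keeping the two adjunctions (action and coaction) straight, both supplied by Lemma~\ref{bicomplete}. In fact the argument is essentially the extraction of a step already carried out inside the proof of Theorem~\ref{injsp}, where the bootstrap ``$\cc G$ generates $\cc V$, so $V\oslash S\in\surd\cup_i\cc S_i$'' was used in an identical manner.
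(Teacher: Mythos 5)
Your proof is correct and follows essentially the same route as the paper: the paper simply remarks that ``the proof of the preceding theorem also implies the following statement,'' and the relevant passage inside the proof of Theorem~\ref{injsp} is precisely the two-step bootstrap you extract --- Proposition~\ref{svoistva}(1) for the ``only if'' direction, and the adjunction $\Hom_{\cc A}(S\oslash g,E)\cong\Hom_{\cc A}(S,[g,E])$ plus a cover $\bigoplus_i g_i\twoheadrightarrow V$ for the ``if'' direction. Your spelled-out justification that $\cc S$-closed injectives detect $\cc S$-torsion is a useful detail that the paper leaves implicit, but it is not a new idea.
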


We say that the enriched ringoid $\cc A$ is {\it left coherent\/} if the category of left $\cc A$-modules
$\cc A\Mod$ is locally coherent or, equivalently, $\cc A\modd$ is an Abelian $\cc V$-category.

A continuous function $f:X\to Y$ between topological spaces
is called an {\it embedding\/} if in its image factorization
   $$f:X\to f(X)\hookrightarrow Y$$
with the image $f(X)\hookrightarrow Y$
 equipped with the subspace topology, we have that 
$X\to f(X)$ is a homeomorphism. This is called a {\it closed embedding\/} if the image 
$f(X)\subset Y$ is a closed subset.

\begin{thm}\label{spvszg}
Let $\cc A$ be left coherent. Then the enriched localising subcategory of Theorem~\ref{recoll}
$\cc S_{\cc A}:=\{Y\in\ac\mid Y(a)=0\textrm{ for all $a\in\cc A$}\}$
is of finite type in $\ac$ and
the functor $\cc A\Mod\to\ac$, $M\mapsto-\otimes_{\cc A}M$,
induces a closed embedding of topological spaces $\Sp\cc A\hookrightarrow{}_{\cc A}\Zg$.
\end{thm}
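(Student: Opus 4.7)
The plan has two parts: (i) showing $\cc S_{\cc A}$ is of finite type in $\ac$, and (ii) identifying the image of $M\mapsto-\otimes_{\cc A}M$ on $\Sp\cc A$ as the closed complement of $\cc O(\cc S_{\cc A}\cap\coh\ac)$ in ${}_{\cc A}\Zg$, and checking that the subspace topology agrees with the Ziegler topology of $\Sp\cc A$.

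For (i), the recollement of Theorem~\ref{recoll} identifies $\cc A\Mod$ with the full subcategory of $\cc S_{\cc A}$-closed objects of $\ac$ via the right adjoint $r_R$ of the restriction functor $r$, so $\cc S_{\cc A}$ is of finite type iff $r_R$ preserves direct limits. Since limits in $\ac$ are pointwise on $\modd\cc A$, it suffices to show that $r_R(-)(N):\cc A\Mod\to\cc V$ preserves direct limits for each $N\in\modd\cc A$. The enriched right Kan extension formula gives $r_R(M)(N)\cong[N^*,M]_{\cc A\Mod}$, where the left $\cc A$-module $N^*$ is defined by $N^*(a)=[N,[-,a]]_{\Mod\cc A}$. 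Applying $[-,[-,a]]$ to a finite presentation of $N$ by generators of $\Mod\cc A$ and using dualizability of elements of $\cc G$, one realises $N^*$ as the kernel of a morphism $\bigoplus_j[a_j,-]\oslash g_j^\vee\to\bigoplus_i[a_i,-]\oslash g_i^\vee$ between finitely presented generators of $\cc A\Mod$. Left coherence of $\cc A$ makes $\cc A\Mod$ locally coherent, so $N^*$ is finitely presented, and Lemma~\ref{fprV} then implies that $[N^*,-]$ preserves direct limits.

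For (ii), I would first show that every indecomposable injective $E\in\Sp\cc A$ satisfies $r_R(E)\cong-\otimes_{\cc A}E$. Both sides are indecomposable injectives in $\ac$ (the left because $r_R$ is right adjoint to the exact $r$, the right by Corollary~\ref{pureinj} since $E$ is pure-injective), and Corollary~\ref{pureinj} applied to $r_R(E)$ writes it as $-\otimes_{\cc A}Q$ with $Q=r(-\otimes_{\cc A}Q)=r(r_R(E))=E$. Hence $-\otimes_{\cc A}E$ is $\cc S_{\cc A}$-closed; conversely, any $\cc S_{\cc A}$-closed indecomposable injective of $\ac$ is of the form $-\otimes_{\cc A}Q$ with $Q=r(-\otimes_{\cc A}Q)$ indecomposable injective in $\cc A\Mod$. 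The image $\cc Z$ of $\Sp\cc A$ in ${}_{\cc A}\Zg$ is therefore $\{Q\in{}_{\cc A}\Zg\mid-\otimes_{\cc A}Q\text{ is }\cc S_{\cc A}\text{-closed}\}$; since these objects are injective, this coincides with $\{Q\mid t_{\cc S_{\cc A}}(-\otimes_{\cc A}Q)=0\}={}_{\cc A}\Zg\setminus\cc O(\cc S_{\cc A}\cap\coh\ac)$, which is closed.

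The main obstacle will be verifying that the subspace topology on $\cc Z$ matches the Ziegler topology on $\Sp\cc A$. The recollement together with part~(i) yields a bijection between enriched localising subcategories of finite type $\cc T'\supseteq\cc S_{\cc A}$ in $\ac$ and enriched localising subcategories of finite type $\cc S'$ in $\cc A\Mod$; exactness of $r_R$ gives $t_{\cc T'}(r_R(Q))\cong r_R(t_{\cc S'}(Q))$, so every basic open $\cc O(\cc S')$ on $\Sp\cc A$ equals $\cc O(\cc T')\cap\cc Z$. For the converse direction, given an arbitrary enriched Serre $\cc T\subseteq\coh\ac$, the fact that $\cc S_{\cc A}$ is of finite type makes $r$ preserve finitely presented objects (being left adjoint to the direct-limit-preserving $r_R$), and local coherence of $\cc A\Mod$ gives $r(\coh\ac)\subseteq\cc A\modd$. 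Using the adjunction $\Hom_{\ac}(C,r_R(Q))\cong\Hom_{\cc A\Mod}(r(C),Q)$ together with Lemma~\ref{nenol} and Corollary~\ref{skobki}, I would conclude $\cc O(\cc T)\cap\cc Z=\cc O(\langle r(\cc T)\rangle)$ in $\Sp\cc A$, where $\langle r(\cc T)\rangle$ denotes the enriched Serre closure of $r(\cc T)$ in $\cc A\modd$. This identifies the two topologies and completes the proof.
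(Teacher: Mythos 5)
Your proposal is correct and takes a genuinely more explicit route than the paper's, which is quite terse: for the finite-type claim the paper invokes the argument of the implication $(1)\Rightarrow(4)$ of~\cite[Theorem~7.4]{GG} to identify $\cc S_{\cc A}$ with $\vec{\cc P}$ where $\cc P=\cc S_{\cc A}\cap\coh\ac$, and for the closed embedding it simply cites~\cite[Proposition~12]{GG3} together with Theorems~\ref{zieglersp} and~\ref{injsp}. You instead compute the enriched right Kan extension directly, writing $r_R(M)(N)\cong[N^*,M]$ with $N^*(a)=[N,[-,a]]$, then show $N^*\in\cc A\modd$ via a finite presentation, dualizability of the generators, and local coherence of $\cc A\Mod$, so that Lemma~\ref{fprV} gives preservation of direct limits; this buys a self-contained, constructive verification where the paper relies on imported machinery. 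For the topological part you explicitly identify the image of $\Sp\cc A$ as the closed complement of $\cc O(\cc S_{\cc A}\cap\coh\ac)$ and match the topologies by tracking the correspondence of enriched localizing/Serre subcategories through the recollement, which is essentially the content of~\cite[Proposition~12]{GG3} but spelled out. One small slip: $r_R$ is a right adjoint and hence only left exact, so \emph{exactness of $r_R$} should read \emph{left exactness}; the isomorphism $t_{\cc T'}(r_R(Q))\cong r_R(t_{\cc S'}(Q))$ in fact needs the additional observation that any $\cc T'$-torsion subobject of $r_R(Q)$ is $\cc S_{\cc A}$-torsionfree (being inside the $\cc S_{\cc A}$-closed $r_R(Q)$) and hence is detected by $r$, but the argument goes through.
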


\begin{proof}
By Theorem~\ref{recoll} 
the functor $\cc A\Mod\to\ac/\cc S_{\cc A}$ sending $M$ to $(-\otimes_{\cc A}M)_{\cc S_{\cc A}}$
is an equivalence of categories. Denote by $\cc P:=\cc S_{\cc A}\cap\coh\ac$. Then $\cc P$
is an enriched Serre subcategory of $\coh\ac$. If we literally repeat the proof of the implication $(1)\Rightarrow(4)$ 
of~\cite[Theorem~7.4]{GG}, we get that $\cc S_{\cc A}=\vec{\cc P}$, and hence $\cc S_{\cc A}$ is of finite type.
Since $\ac$ is locally coherent, enriched localising subcategories of finite type and of strictly finite type coincide
by~\cite[Theorem~5.14]{GG}. Our theorem now follows from~\cite[Proposition~12]{GG3} and Theorems~\ref{zieglersp}
and~\ref{injsp}.
\end{proof}

\section{The Ziegler spectrum of a scheme}\label{sectionscheme}

Consider the case $\cc A=\{e\}$, where $e$ is the monoidal unit of $\cc V$. Then $\cc A\Mod$ or
$\Mod\cc A$ is identified with $\cc V$ and $\cc A\modd/\modd\cc A$ is identified with $\fp(\cc V)$.
In this case the category of generalised $\cc A$-modules coincides with the Grothendieck category of
enriched functors $[\fp(\cc V),\cc V]$.

Denote by $\Zg_{\cc V}$ the Ziegler spectrum of the enriched ringoid $\cc A=\{e\}$. It consists of the
isomorphism classes of pure-injective objects of $\cc V$ equipped with topology of Theorem~\ref{zieglersp}.
We also refer the reader to~\cite[Section~6]{HO} for further properties of pure-injective objects in $\cc V$.

\begin{lem}\label{compact}
$\Zg_{\cc V}$ is a quasi-compact topological space.
\end{lem}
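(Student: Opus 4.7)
The plan is to exhibit $\Zg_{\cc V}$ itself as a basic open set of the form $\cc O(C)$ for a single coherent object $C\in\coh\ac$. Once this is achieved, Theorem~\ref{zieglersp}(3) tells us that every such basic open set is quasi-compact, and the conclusion follows immediately.

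The natural candidate is $C=[e,-]$, viewed as an object of $\ac=[\modd\cc A,\cc V]=[\fp(\cc V),\cc V]$ (since $\cc A=\{e\}$ forces $\modd\cc A=\fp(\cc V)$). I would first check that $[e,-]$ genuinely lies in $\coh\ac$: by our standing hypothesis the unit $e$ is finitely presented in $\cc V$, so $e\in\cc A\modd$, and Theorem~\ref{vazhno} identifies $\{[M,-]\mid M\in\cc A\modd\}$ as a family of coherent generators of $\ac$. In particular $[e,-]$ is coherent.

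Next I would use the enriched Yoneda Lemma (Lemma~\ref{enryon}) to evaluate the internal Hom in the defining condition for $\cc O([e,-])$: for any pure-injective $Q$, Corollary~\ref{pureinj} gives an injective object $-\otimes_{\cc A}Q\in\ac$, and Yoneda yields
$$[\,[e,-],\,-\otimes_{\cc A}Q\,]\ \cong\ (-\otimes_{\cc A}Q)(e)\ =\ e\otimes Q\ \cong\ Q.$$
Hence this internal Hom vanishes if and only if $Q=0$, so
$$\cc O([e,-])\ =\ \{Q\in\Zg_{\cc V}\mid Q\neq 0\}\ =\ \Zg_{\cc V},$$
because every indecomposable pure-injective is nonzero by convention.

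Finally, Theorem~\ref{zieglersp}(3) asserts that $\cc O(C)$ is quasi-compact for every coherent $C$, and in particular for $C=[e,-]$. This forces $\Zg_{\cc V}$ to be quasi-compact. There is no real obstacle in this argument; the only point one has to be a little careful about is justifying that $[e,-]$ is coherent, which is exactly where the standing assumption that the monoidal unit is finitely presented enters.
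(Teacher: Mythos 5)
Your proof is correct and takes essentially the same route as the paper: the paper observes $\Zg_{\cc V}=\cc O(-\otimes e)$ and invokes Theorem~\ref{zieglersp}(3), while you use the representable $[e,-]$, which the paper itself notes is isomorphic to $-\otimes e$. The Yoneda computation and the coherence check you supply are exactly the justification the paper leaves implicit.
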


\begin{proof}
This follows from Theorem~\ref{zieglersp}(3) if we observe that $\Zg_{\cc V}=\cc O(-\otimes e)$.
\end{proof}

In~\cite{Jason} Darbyshire showed that if $R$ is a commutative ring then $[\modd R,\Mod R]$ is isomorphic
to the category of generalised $R$-modules ${}_{R}\cc C=(\modd R,\Ab)$. Since $\modd R$ is a symmetric monoidal
$\Mod R$-category, a theorem of Day~\cite{Day} says that ${}_{R}\cc C=(\modd R,\Ab)\cong[\modd R,\Mod R]$
is equipped with a closed symmetric monoidal structure $(\underline{\Hom}_{{}_{ R}\cc C},\odot,-\otimes_RR)$, where
$-\otimes_RR\cong\Hom_R(R,-)$ is a monoidal unit for the Day monoidal product $\odot$. Darbyshire~\cite{Jason}
proved that the Auslander--Gruson--Jensen Duality 
   $$D:(\coh{}_R\cc C)^{\op}\to\coh\cc C_R$$ 
is isomorphic to the internal Hom-functor $\underline{\Hom}_{{}_{ R}\cc C}(?,-\otimes_RR)$.

Likewise, $\fp(\cc V)$ is a symmetric monoidal $\cc V$-category, and hence $[\fp(\cc V),\cc V]$
is equipped with  Day's closed symmetric monoidal structure $(\underline{\Hom}_{[\fp(\cc V),\cc V]},\odot,-\otimes e)$~\cite{Day}, where
$-\otimes e\cong[e,-]$ is a monoidal unit with respect to the Day monoidal product $\odot$ --- see Theorem~\ref{daythm}.

\begin{thm}\label{dualityV}
The Auslander--Gruson--Jensen Duality of Theorem~\ref{agj}
   $$D:(\coh[\fp(\cc V),\cc V])^{\op}\to\coh[\fp(\cc V),\cc V]$$
is isomorphic to the internal Hom-functor $\underline{\Hom}_{[\fp(\cc V),\cc V]}(?,-\otimes e)$.
\end{thm}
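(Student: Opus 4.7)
The plan is to realise both functors as the same end in $\cc V$ via their respective defining formulas, deriving the isomorphism from the unit and symmetry coherences of $\cc V$. In the specialisation $\cc A=\{e\}$ one has $\modd\cc A=\cc A\modd=\fp(\cc V)$ and, for $N\in\fp(\cc V)$, the object $-\otimes_{\cc A}N\in\ac$ is the $\cc V$-functor $d\mapsto d\otimes N$. By the end description~\eqref{theend} of the enriched Hom-$\cc V$-object of $\ac$,
$$D(C)(N)\;=\;[C,\,-\otimes N]_{\ac}\;=\;\int_{d\in\fp(\cc V)}\bigl[C(d),\,d\otimes N\bigr],$$
whereas Day's internal Hom formula~\eqref{inthom}, taken with $\diamond=\otimes$ and $Y=-\otimes e$, yields
$$\underline{\Hom}_{[\fp(\cc V),\cc V]}(C,\,-\otimes e)(N)\;=\;\int_{d\in\fp(\cc V)}\bigl[C(d),\,N\otimes d\otimes e\bigr].$$
The coherence isomorphisms $N\otimes d\otimes e\cong N\otimes d\cong d\otimes N$ of $\cc V$ are $\cc V$-natural in $d$, so they induce a $\cc V$-natural isomorphism between the integrands, hence between the two ends, which one then promotes to a $\cc V$-natural isomorphism in $N$ and in $C$ via the universal property of ends.

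A more conceptual variant first checks the identification on representables. By Theorem~\ref{agj}, $D([M,-])\cong M\otimes_{\cc A}-=M\otimes-$. On the other hand, writing $Y=-\otimes e$, the first form of Day's formula~\eqref{inthom} gives $\underline{\Hom}([M,-],Y)(N)=\cc V_{\ac}\bigl([M,-],\,Y(N\otimes-)\bigr)$, which by the enriched Yoneda Lemma~\ref{enryon} equals $Y(N\otimes M)=N\otimes M\otimes e\cong M\otimes N$, so $\underline{\Hom}([M,-],-\otimes e)\cong M\otimes-$ in $\ac$. Both $\cc V$-functors $D,\,\underline{\Hom}(?,\,-\otimes e):(\coh\ac)^{\op}\to\ac$ are exact: the first is a duality by Theorem~\ref{agj}, and the second sends cokernels to kernels because each $N\otimes-\in\ac$ is $\underline{\coh}$-injective by Theorem~\ref{cohinjobj}, so the functor $[?,\,N\otimes-]$ is exact on $\coh\ac$ for every $N$. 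Presenting an arbitrary $C\in\coh\ac$ as $[L,-]\to[M,-]\to C\to 0$, one extends the natural isomorphism from representables uniquely to $C$ and, as a by-product, sees that $\underline{\Hom}(?,-\otimes e)$ factors through $\coh\ac\subseteq\ac$.

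The main obstacle is keeping the $\cc V$-naturality honest throughout. The fibrewise identification is immediate from the symmetry and unit coherences of $\cc V$, but one must confirm that the induced transformation is genuinely $\cc V$-natural in both $C$ and $N$, i.e.\ respects the $\cc V$-enrichment of $D$ (defined via the end in $\ac$) and of Day's internal Hom (defined via a coend/end calculus involving $\diamond$). This reduces to standard bookkeeping with the end/coend calculus and the coherence diagrams of the symmetric monoidal category $\cc V$, with no new conceptual ingredient beyond careful execution.
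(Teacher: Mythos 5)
Your first argument follows the paper's proof essentially verbatim: the paper also applies Day's internal-Hom formula $\underline{\Hom}(C,-\otimes e)(N)=[C,\,[e,N\otimes -]]\cong[C,\,N\otimes -]=D(C)(N)$ and then invokes the objectwise-isomorphism criterion to conclude $\cc V$-natural equivalence, so your end-calculus version is the same computation unwound one more step. Your second "conceptual variant" (matching the two functors on representables via enriched Yoneda and then extending to all of $\coh\ac$ using that both are exact, the second because $N\otimes-$ is $\underline{\coh}$-injective by Theorem~\ref{cohinjobj}) is a genuinely different, equally valid route not taken in the paper; it trades the direct coherence bookkeeping for the structural fact that a natural isomorphism of exact functors on an abelian category is determined on a generating family, and it has the slight advantage of exhibiting explicitly that $\underline{\Hom}(?,-\otimes e)$ lands in $\coh\ac$, which the paper leaves implicit.
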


\begin{proof}
There is an isomorphism of $\cc V$-functors 
   $$\underline{\Hom}_{[\fp(\cc V),\cc V]}(?,-\otimes e)\cong\underline{\Hom}_{[\fp(\cc V),\cc V]}(?,[e,-]).$$
For any $C\in\coh[\fp(\cc V),\cc V]$ and $N\in\fp(\cc V)$ one has
   $$\underline{\Hom}_{[\fp(\cc V),\cc V]}(C,-\otimes e)(N)=[C,[e,-](N\otimes-)]=[C,[e,N\otimes-]]\cong[C,N\otimes-]=D(C)(N).$$
The isomorphism induces a $\cc V$-natural transformation of $\cc V$-functors $\underline{\Hom}_{[\fp(\cc V),\cc V]}(?,-\otimes e)\to D$,
which is objectwise an isomorphism. Therefore it is an equivalence of $\cc V$-functors.
\end{proof}

\begin{lem}
The functor $\cc V\to[\fp(\cc V),\cc V]$, $N\mapsto-\otimes N$, is strong monoidal. In particular,
$(-\otimes M)\odot(-\otimes N)\cong-\otimes(M\otimes N)$.
\end{lem}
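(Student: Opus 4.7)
The plan is to verify strong monoidality by checking three things: preservation of the unit, a natural isomorphism at the level of monoidal products, and compatibility with the associativity/symmetry constraints. The in-particular statement is the middle point, and it is the main content.

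First I would observe that the Day unit of $[\fp(\cc V),\cc V]$ is, by Theorem~\ref{daythm}, the representable functor $\cc V_{\fp(\cc V)}(u,-)$, where $u$ is the monoidal unit of $\fp(\cc V)$. Since $e\in\fp(\cc V)$ by assumption, $u=e$ and this Day unit is $[e,-]$, which is canonically isomorphic to $-\otimes e$ via the unit constraint of $\cc V$. Hence $F(e)=-\otimes e$ agrees with the Day unit up to canonical isomorphism, furnishing the unit part of the strong monoidal structure.

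For the tensor part, I would compute using Day's formula~\eqref{mon}:
\[
(-\otimes M)\odot(-\otimes N)(x)=\int^{c,d\in\fp(\cc V)}\cc V(c\otimes d,x)\otimes(c\otimes M)\otimes(d\otimes N).
\]
By the symmetry of $\otimes$ in $\cc V$ the integrand is naturally isomorphic to $\cc V(c\otimes d,x)\otimes c\otimes d\otimes M\otimes N$, and since tensoring with $M\otimes N$ commutes with colimits, this simplifies to $\bigl(\int^{c,d}\cc V(c\otimes d,x)\otimes c\otimes d\bigr)\otimes M\otimes N$. Applying Fubini and the tensor/internal-hom adjunction in the inner coend yields $\int^c\cc V(c,[d,x])\otimes c\cong[d,x]$, by the enriched coYoneda lemma; here I use that $\int^{c\in\fp(\cc V)}\cc V(c,y)\otimes c\cong y$ holds for \emph{every} $y\in\cc V$ (not only $y\in\fp(\cc V)$), which one sees by writing $y$ as a directed colimit of finitely presented objects and invoking the fact that $[c,-]$ preserves directed colimits for $c\in\fp(\cc V)$ (derived from Corollary~\ref{zabavnocor} together with the standard presentation $\oplus g_i\to\oplus g_j\to c\to 0$). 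A second application of coYoneda gives $\int^d [d,x]\otimes d\cong x$, so altogether the coend evaluates to $x\otimes(M\otimes N)$, which is $-\otimes(M\otimes N)$ at $x$.

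The pointwise isomorphism thus obtained is $\cc V$-natural by Corollary~\ref{wow} (equivalently, it can be constructed as a $\cc V$-natural transformation throughout, using the universal property of coends). Naturality in $M$ and $N$ is built into the construction. The associativity and symmetry coherence diagrams for a strong monoidal functor then follow from Mac Lane-style coherence combined with the uniqueness of the maps, as all the isomorphisms involved are assembled from the associator, symmetry and unitors of $\cc V$. The main potential obstacle is not the algebra of the coend but the tacit prerequisite that $\fp(\cc V)$ be a symmetric monoidal $\cc V$-subcategory of $\cc V$, i.e.\ closed under $\otimes$; this is where the dualizable-generator hypothesis enters, via the colimit-preservation property of $[c,-]$ for $c\in\fp(\cc V)$ that underlies every coYoneda step above.
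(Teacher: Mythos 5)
Your proof is correct, but it takes a genuinely different and more explicit route than the paper's. The paper works entirely at the level of $\cc V$-functors: it rewrites $-\otimes M$ and $-\otimes N$ via the coend representation of Theorem~\ref{polezno}, pulls $M\otimes N$ outside the Day convolution by bilinearity of $\odot$ over tensoring with $\cc V$-objects, recognizes what remains as $(-\otimes e)\odot(-\otimes e)$, and then simply invokes the Day unit law of Theorem~\ref{daythm} (since $-\otimes e\cong[e,-]=\cc V_{\fp(\cc V)}(e,-)$ is the Day unit) to conclude. You instead unwind Day's coend formula~\eqref{mon} pointwise, and after pulling out $M\otimes N$ you compute the double coend $\int^{c,d}[c\otimes d,x]\otimes c\otimes d\cong x$ from scratch via Fubini, the tensor--hom adjunction $[c\otimes d,x]\cong[c,[d,x]]$, and coYoneda. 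In effect you are directly verifying the Day unit isomorphism for $[\fp(\cc V),\cc V]$ that the paper treats as a black box. The one extra ingredient your route needs, which the paper's sidesteps, is the extension $\int^{c\in\fp(\cc V)}[c,y]\otimes c\cong y$ to arbitrary $y\in\cc V$ (the intermediate object $[d,x]$ need not be finitely presented even when $d$ and $x$ are); your justification by writing $y$ as a directed colimit of finitely presented objects and using that $[c,-]$ preserves direct limits for $c\in\fp(\cc V)$ is correct and could be cited directly from Lemma~\ref{fprV} rather than re-derived from Corollary~\ref{zabavnocor}. Your closing observation that $\fp(\cc V)$ must be closed under $\otimes$ for the Day convolution to exist is well taken, though it is a prerequisite for the statement rather than a step inside its proof; it too follows from Lemma~\ref{fprV}. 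Both arguments are sound: the paper's is shorter because it leans on Day's theorem functorially, yours is more self-contained at the cost of the extended-coYoneda step.
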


\begin{proof}
By Theorems~\ref{polezno}, \ref{daythm} 
one has $\cc V$-natural isomorphisms
   \begin{multline*}
      (-\otimes M)\odot(-\otimes N)\cong(\int^A[A,-]\otimes A\otimes M)\odot(\int^B[B,-]\otimes B\otimes N)\cong\\
      (\int^A[A,-]\otimes A\odot\int^B[B,-]\otimes B)\otimes(M\otimes N)\cong(-\otimes e)\odot(-\otimes e)\otimes(M\otimes N)\cong-\otimes(M\otimes N).
    \end{multline*}
Our lemma now follows.
\end{proof}

\begin{rem}\label{notdual}
It is worth mentioning that generators $[M,-]$, $M\in\fp(\cc V)$, of $[\fp(\cc V),\cc V]$ are not dualizable
with respect to the tensor product $\odot$ in general. Indeed, following~\cite[Example~7.5]{AG} consider 
$\cc V=\Ab$. Then $(\bb Z_2,-)\odot-\otimes\bb Z_2\cong(\bb Z_2,-)$ but 
$\underline{\Hom}_{\cc C_{\bb Z}}(-\otimes\bb Z_2,-\otimes\bb Z_2)\cong-\otimes\bb Z_2$.
\end{rem}

We are now in a position to pass to the construction of the Ziegler spectrum of a ``nice" scheme $X$.
We start with the closed symmetric monoidal category $(\Qcoh(X),\otimes_X,{\cc Hom}_X^{qc}$) of 
quasi-coherent scheaves over a scheme $X$. It is a Grothendieck category 
by~\cite[Lemma~1.3]{Alonso} (see~\cite[Corollary 3.5]{EE} as well).

\begin{defs}
A quasi-compact quasi-separated scheme $X$ satisfying the strong resolution property is called a {\it nice scheme}.
By Example~\ref{examples}(2) if $X$ is nice, $\Qcoh(X)$ has a family of 
dualizable generators $\cc G = \{g_i\}_{i\in I}$ such that the monoidal unit $\cc O_X$ is finitely presented. 
\end{defs}

Given a nice scheme $X$, pure-injective objects in $\Qcoh(X)$ in the sense of Definition~\ref{puredef} 
(the enriched ringoid $\cc A$ is the singleton $\{\cc O_X\}$ in this case) are the same 
with pure-injective quasi-coherent sheaves in the sense of~\cite[Definition~4.1]{EEO}.

\begin{defs}\label{zieglersch}
The {\it category of generalised quasi-coherent sheaves\/} $\cc C_X$ of a nice scheme $X$ 
is defined as $[\fp(\Qcoh(X)),\Qcoh(X)]$.

The {\it Ziegler spectrum of a nice scheme $X$}, denoted by $\Zg_X$, is the Ziegler spectrum $\Zg_{\cc V}$
associated with $\cc V=\Qcoh(X)$. By definition, the points of $\Zg_X$ are the isomorphism classes of indecomposable
pure-injective quasi-coherent sheaves. $\Zg_X$ is a topological space equipped with the Ziegler topology 
of Theorem~\ref{zieglersp}.

The {\it injective spectrum of $X$}, denoted by $\Sp(X)$, is the injective spectrum associated with $\Qcoh(X)$ equipped with topology
of Theorem~\ref{injsp} (the enriched ringoid $\cc A$ is the singleton $\{\cc O_X\}$ in this case). Observe that
the topological space $\Sp(X)$ is nothing but the topological space $\Sp_{\mathrm{fl},\otimes}(X)$ in the sense 
of~\cite[Theorem~19]{GG3} (in this paper we drop these subscripts for the injective spectrum of $X$ to simplify our notation).

Following~\cite{GG3} a scheme $X$ is {\it locally coherent\/} if it can be covered by
open affine subsets $\spec R_i$, where each $R_i$ is a coherent
ring. $X$ is {\it coherent\/} if it is locally coherent,
quasi-compact and quasi-separated. 
\end{defs}

The injective spectrum $\Sp(X)$ plays a prominent role for classifying 
finite localizations of quasicoherent sheaves and for the theorem reconstructing $X$ out of $\Qcoh(X)$ in~\cite{GG3}
(see~\cite{GaPr1,GaPr2,GaPr3} as well).
The following theorem relates quasi-coherent sheaves and generalised quasi-coherent sheaves.
It also relates injective and Ziegler spectra of $X$.

\begin{thm}\label{ugu}
The following statements are true for a nice scheme $X$:

(1) Define an enriched localizing subcategory 
$\cc S_{X}:=\{Y\in\cc C_X\mid Y(\cc O_X)=0\}\subset\cc C_X$. 
Then there is a recollement \begin{diagram*}[column
sep=huge] \cc S_{X}\arrow[r,"i"] &{\cc C_X}
   \bendR{i_R}
   \bendL{i_L}
   \arrow[r,"r"]			
& {\Qcoh(X)}\bendR{r_R} 
   \bendL{-\otimes_{X}?} 
\end{diagram*}
with functors $i,r$ being the canonical inclusion and restriction functors
respectively. The functor $r_R$ is the enriched right Kan extension,
$i_R$ is the torsion functor associated with the localizing subcategory $\cc S_{X}$.
Furthermore, if $\cc C_X/\cc S_{X}$ is the quotient category of $\cc C_X$ with respect to $\cc S_{X}$,
the functor $\Qcoh(X)\to\cc C_X/\cc S_{X}$ sending $M$ to $(-\otimes_{X}M)_{\cc S_{X}}$
is an equivalence of categories.

(2) The Ziegler spectrum $\Zg_X$ is a quasi-compact topological space and the Auslander--Gruson--Jensen
Duality of~Theorem~\ref{dualityV}
   $$D\cong\underline{\Hom}_{\cc C_X}(?,-\otimes_X\cc O_X):(\coh\cc C_X)^{\op}\to\coh\cc C_X$$
induces an isomorphism $\cc O(\cc S)\mapsto\cc O(D\cc S)$ between open sets of $\Zg_X$.

(3) If $X$ is coherent, $\cc S_X$ is of finite type and there is a closed embedding $\Sp(X)\hookrightarrow\Zg_X$ 
of the injective spectrum of $X$ into its Ziegler spectrum.
\end{thm}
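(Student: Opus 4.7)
The plan is to derive Theorem~\ref{ugu} by specialising the general results of Sections~\ref{moduli}--\ref{sectioninj} to the enriched ringoid $\cc A=\{\cc O_X\}$ inside $\cc V=\Qcoh(X)$, whose single endomorphism object is the monoidal unit $\cc O_X$. Under this choice, enriched Yoneda identifies $\cc A\Mod$ with $\Qcoh(X)$ and $\ac=[\fp(\cc V),\cc V]$ with $\cc C_X$; likewise $M\otimes_{\cc A}N$ becomes $M\otimes_X N$. With this dictionary in place, each of the three items follows from an earlier theorem.

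For (1), I would invoke Theorem~\ref{recoll} verbatim with $\cc A=\{\cc O_X\}$: the torsion subcategory $\cc S_{\cc A}$ of functors vanishing on each object of $\cc A$ becomes exactly $\cc S_X$, and the four-functor recollement diagram and the equivalence $\cc C_X/\cc S_X\simeq\Qcoh(X)$, $M\mapsto(-\otimes_X M)_{\cc S_X}$, carry over unchanged. No extra work is required beyond checking that the identification $\cc A\Mod\simeq\Qcoh(X)$ sends the enriched left Kan extension functor to $-\otimes_X?$.

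For (2), quasi-compactness of $\Zg_X$ is immediate from Lemma~\ref{compact}, since $\Zg_X=\Zg_{\cc V}$ and this equals $\cc O(-\otimes \cc O_X)$. The identification of the duality $D$ with $\underline{\Hom}_{\cc C_X}(?,-\otimes_X\cc O_X)$ is Theorem~\ref{dualityV}, and the claim that $\cc O(\cc S)\mapsto\cc O(D\cc S)$ is a bijection of open subsets is Theorem~\ref{dualserre} specialised to $\cc A=\{\cc O_X\}$; the left and right Ziegler spectra of $\cc A$ coincide in this commutative setting, so the resulting map is an isomorphism of topologies on $\Zg_X$.

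For (3), the essential point is that when $X$ is coherent, $\Qcoh(X)$ is a locally coherent Grothendieck category, so the enriched ringoid $\cc A=\{\cc O_X\}$ is left coherent in the sense defined just before Theorem~\ref{spvszg}. Then Theorem~\ref{spvszg} applies and yields both assertions at once: $\cc S_X$ is of finite type in $\cc C_X$, and the fully faithful $\cc V$-functor $\Qcoh(X)\to\cc C_X$, $M\mapsto-\otimes_X M$, induces a closed embedding $\Sp(X)\hookrightarrow\Zg_X$. The only step that deserves care, and is the most likely obstacle, is verifying local coherence of $\Qcoh(X)$ for a coherent scheme $X$; this is a standard fact (one covers $X$ by affines $\spec R_i$ with $R_i$ coherent and uses that $\fp\Qcoh(X)$ is closed under kernels, together with the quasi-compact quasi-separated hypotheses to glue), and once it is in hand the closed embedding statement is a formal consequence of Theorems~\ref{zieglersp}, \ref{injsp}, and \ref{spvszg}.
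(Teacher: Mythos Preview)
Your proposal is correct and follows essentially the same route as the paper: the paper's proof simply cites Theorems~\ref{recoll}, \ref{dualserre}, \ref{spvszg}, Lemma~\ref{compact}, and \cite[Proposition~40]{GG3}, the last of which supplies precisely the local coherence of $\Qcoh(X)$ for a coherent scheme that you flagged as the only nontrivial step in part~(3). Your identification of each ingredient with the corresponding specialisation to $\cc A=\{\cc O_X\}$ matches the paper exactly.
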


\begin{proof}
This follows from Theorems~\ref{recoll}, \ref{dualserre}, \ref{spvszg}, Lemma~\ref{compact} and~\cite[Proposition~40]{GG3}.
\end{proof}

\begin{rem}
In~\cite{PS} Prest and Sl\'avik study the categorical Ziegler spectrum $\Zg(\Qcoh(X))$.
It is not quasi-compact~\cite[Corollary~5.7]{PS} in contrast with $\Zg_X$, but
the subset $\cc D_X$ of the indecomposable geometrically pure-injective quasicoherent sheaves form a closed 
quasi-compact subset of $\Zg(\Qcoh(X))$ \cite[Theorem~4.8]{PS}. It would be interesting
to compare $\Zg_X$ of Theorem~\ref{ugu} with $\cc D_X$ endowed with the subspace topology
of $\Zg(\Qcoh(X))$.
\end{rem}

Recall from~\cite{Hoc} that a topological space is {\it spectral\/}
if it is $T_0$, quasi-compact, if the quasi-compact open subsets are
closed under finite intersections and form an open basis, and if
every non-empty irreducible closed subset has a generic point. Given
a spectral topological space, $X$, Hochster~\cite{Hoc} endows the
underlying set with a new, ``dual", topology, denoted $X^*$, by
taking as open sets those of the form $Y=\bigcup_{i\in\Omega}Y_i$
where $Y_i$ has quasi-compact open complement $X\setminus Y_i$ for
all $i\in\Omega$. Then $X^*$ is spectral and $(X^*)^*=X$ (see
\cite[Proposition~8]{Hoc}). As an example, the underlying topological space (denote it by the same symbol) of a quasi-compact,
quasi-separated scheme $X$ is spectral.

\begin{cor}\label{noeth}
Let $X$ be a noetherian nice scheme. Then there is a natural closed embedding 
of topological spaces $X^*\hookrightarrow\Zg_X$.
\end{cor}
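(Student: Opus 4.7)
The plan is to factor the asserted map as the composite
   $$X^{*}\;\xrightarrow{\;\sim\;}\;\Sp(X)\;\hookrightarrow\;\Zg_X,$$
where the right-hand arrow is the closed embedding supplied by Theorem~\ref{ugu}(3) (which applies since a noetherian scheme is automatically coherent), and the left-hand arrow is a homeomorphism that I would construct. Once the left map is shown to be a homeomorphism, composing with a closed embedding yields the desired closed embedding $X^{*}\hookrightarrow\Zg_X$.

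For the underlying set map, I would send $x\in X$ to the injective envelope $E(x):=E_{\Qcoh(X)}(j_{x,*}k(x))$, where $j_x\colon\spec k(x)\to X$ is the canonical inclusion of the point. Since $X$ is noetherian, $\Qcoh(X)$ is locally noetherian (hence locally coherent); the classical Gabriel--Matlis classification of indecomposable injectives over locally noetherian Grothendieck categories, applied locally on an affine cover and glued, then shows that the $E(x)$ are pairwise non-isomorphic indecomposable injective quasi-coherent sheaves and that every element of $\Sp(X)$ is represented by some $E(x)$. Hence $x\mapsto E(x)$ is a bijection $X\to\Sp(X)$.

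For the topology match, note first that since $X$ is noetherian every open of $X$ is quasi-compact, so by definition of Hochster duality the open sets of $X^{*}$ are precisely the specialization-closed subsets of $X$. On the other side, Theorem~\ref{injsp} describes the open sets of $\Sp(X)$ as $\cc O(\cc S)$, with $\cc S$ an enriched localizing subcategory of finite type in $\Qcoh(X)$, and by Corollary~\ref{serreVcor1} these correspond to enriched Serre subcategories of $\coh(X)$. Following Gabriel's classification for noetherian schemes, such subcategories are in bijection with specialization-closed subsets $V\subseteq X$ via $V\mapsto\cc S_V=\{F\in\coh(X)\mid\supp F\subseteq V\}$. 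A direct check, using that an indecomposable injective under a hereditary torsion theory is either torsion or torsionfree, shows that under $x\mapsto E(x)$ the open set $\cc O(\vec{\cc S_V})$ corresponds precisely to $V$, since $t_{\vec{\cc S_V}}(E(x))\neq 0$ exactly when $x\in V$.

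The main obstacle is the Gabriel-style classification step: verifying that in our enriched setting every enriched finite-type localizing subcategory of $\Qcoh(X)$ arises from a specialization-closed subset. The strong resolution property on $X$ forces semiseparatedness by Corollary~\ref{zabavnocor} and ensures that the tensor-closure (i.e.\ enriched) hypothesis is exactly what is needed to align the classification with the underlying-set bijection above. With this classification in place, the rest of the argument is routine.
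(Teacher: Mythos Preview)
Your approach is essentially the paper's: factor through $\Sp(X)$, invoke Theorem~\ref{ugu}(3) for the closed embedding $\Sp(X)\hookrightarrow\Zg_X$, and use Gabriel's theorem for the homeomorphism $X^{*}\cong\Sp(X)$. The paper simply cites Gabriel as a black box (taking the map in the other direction, $E\mapsto$ generic point of $\supp_X(E)$), whereas you sketch the inverse map $x\mapsto E(x)$ and the topology match in more detail.

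Two minor points on your write-up. First, Corollary~\ref{serreVcor1} is stated for $\ac$, not for $\cc A\Mod=\Qcoh(X)$; the correspondence you want between finite-type localizing subcategories of $\Qcoh(X)$ and Serre subcategories of $\coh(X)$ is the general Herzog--Krause result for locally coherent categories, not that corollary. Second, your stated ``main obstacle'' about the enriched condition is not really an obstacle in the noetherian case: Gabriel's classification already shows that \emph{every} Serre subcategory of $\coh(X)$ is of the form $\cc S_V$, and these are visibly tensor-closed since $\supp(F\otimes G)\subseteq\supp F$; hence enriched and arbitrary Serre subcategories coincide here, and no separate verification is needed.
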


\begin{proof}
By a theorem of Gabriel~\cite[Chapter~IV]{Gab} the map $\Sp(X)\to X^*$ sending $E\in\Sp(X)$
to the generic point of the irreducible subset $\supp_X(E)$ of $X$ is a homeomorphism. Our statement now
follows from Theorem~\ref{ugu}(3).
\end{proof}

We conclude the paper with mentioning yet another topological space $\Zg_{\cc V,\odot}$ whose points
are those of $\Zg_{\cc V}$ but with open sets being in bijective correspondence with tensor-closed Serre
subcategories of $\coh[\fp(\cc V),\cc V]$ with respect to the monoidal product $\odot$ on $[\fp(\cc V),\cc V]$.
This topology is coarser than the Ziegler topology on $\Zg_{\cc V}$ as every tensor Serre subcategory is enriched. It 
literally repeats the construction of the tensor fl-topology on $\Sp(X)$ 
in the sense of~\cite[Theorem~19]{GG3}. 
In~\cite[\S~4.1]{Wag} Wagstaffe studied the Ziegler topology for closed symmetric monoidal categories
of additive functors $(\fp(\cc C),\Ab)$, where $\cc C$ is a finitely accessible tensor category. 
It is described in terms of tensor-closed Serre subcategories with respect to
Day's tensor product $\odot$.
The space $\Zg_{\cc V,\odot}$ should share lots of common properties with~\cite[\S~4.1]{Wag} and
we invite the interested reader to study the topological space $\Zg_{\cc V,\odot}$.
   

\end{document}